\theoremstyle{plain}
\newtheorem{thm}{Theorem}[section]
\newtheorem{lem}[thm]{Lemma}
\newtheorem{prop}[thm]{Proposition}
\newtheorem{si}[thm]{Situation}
\newtheorem{const}[thm]{Construction}
\theoremstyle{definition}
\newtheorem{defn}[thm]{Definition}
\theoremstyle{remark}
\newtheorem{rem}[thm]{Remark}
\theoremstyle{definition}
\newtheorem{ex}[thm]{Example}
\newtheorem{nonex}[thm]{Non-example}
\newtheorem{nt}[thm]{Notation}
\newtheorem{claim}[thm]{Claim}
\newcommand{\ZZ}{\mathbb{Z}}
\newcommand{\la}{\lambda}
\newcommand{\ti}{\tilde}
\newcommand{\mO}{\mathscr{O}}
\newcommand{\mV}{\mathscr{V}}
\newcommand{\mL}{\mathscr{L}}
\newcommand{\cG}{\mathcal{G}}
\newcommand{\cI}{\mathcal{I}}
\newcommand{\ot}{\otimes}
\newcommand{\cd}{\cdot}
\let \mb=\mathbb
\let \mc= \mathcal
\let \ra=\rightarrow 
\let \sub=\subset
\let \Ga=\Gamma
\let \La=\Lambda
\let \la=\lambda
\let \sig=\sigma
\let \al=\alpha
\let \be=\beta
\let \fr=\frac
\let \ga=\gamma
\let \ot=\otimes
\let \ul=\underline
\let \pr=\prime
\let \hra=\hookrightarrow
\let \ubr=\underbrace
\let \wt=\widetilde
\numberwithin{equation}{section}
\newcommand{\g}{\mathfrak{g}}
\newtheorem{notation}[thm]{Notation}
\DeclareMathOperator\Pic{Pic}
\DeclareMathOperator\van{Van}
\DeclareMathOperator\mtd{md}
\DeclareMathOperator\val{val}
\DeclareMathOperator\fl{Flag}
\DeclareRobustCommand{\SkipTocEntry}[5]{}
\begin{document}
\title{Secant planes of a general curve via degenerations}
\author{Ethan Cotterill, Xiang He, and Naizhen Zhang}
\begin{abstract}
We study linear series on a general curve of genus $g$, whose images are exceptional with respect to their secant planes. Each such exceptional secant plane is algebraically encoded by an {\it included} linear series, whose number of base points computes the incidence degree of the corresponding secant plane. With enumerative applications in mind, we construct a moduli scheme of inclusions of limit linear series with base points over families of nodal curves of compact type, which we then use to compute combinatorial formulas for the number of secant-exceptional linear series when the spaces of linear series and of inclusions are finite.
\end{abstract}
\address[Ethan Cotterill]{Instituto de Matem\'atica, Universidade Federal Fluminense, Rua Prof Waldemar de Freitas, S/N, Campus do Gragoat\'a, CEP 24.210-201, Niter\'oi, RJ, Brazil}
\email{cotterill.ethan@gmail.com}
\address[Xiang He]{Einstein Institute of Mathematics, Edmond J. Safra Campus, The Hebrew University of Jerusalem,
Givat Ram. Jerusalem, 9190401, Israel}
\email{xiang.he@mail.huji.ac.il}
\thanks{Naizhen Zhang was supported by the Methusalem Project Pure 
Mathematics at KU Leuven during the preparation of this paper. Xiang He is supported by the ERC Consolidator Grant 770922 - BirNonArchGeom.}
\address[Naizhen Zhang]{Institut f\"ur Differentialgeometrie, Leibniz Universit\"at Hannover, Welfengarten 1, 30167 Hannover, Germany}
\email{naizhen.zhang@math.uni-hannover.de}

\maketitle
\tableofcontents

\section{Introduction}\label{sec:intro}
Determining when an abstract curve $C$ comes equipped with a nondegenerate map of degree $m$ to $\mb{P}^s$ is central to curve theory. The {\it Brill--Noether theorem} of Griffiths and Harris gives a complete solution to this problem when $C$ is general in $\mc{M}_g$, but there are also many variations on this basic problem. One of these involves requiring that the image of $C$ intersect a linear subspace of prescribed dimension in the ambient $\mb{P}^s$, with prescribed incidence degree. The linear subspaces are then {\it secant planes} of the image of $C$, and are associated with inclusions of linear series of the form
\begin{equation}\label{inclusion_of_LLS}
g^{s-d+r}_{m-d}+ p_1+ \dots+ p_d \hookrightarrow g^s_m
\end{equation}
in which $d$ is the incidence degree and $(d-r-1)$ is the dimension of the secant plane. Here the points $p_1, \dots p_d$ in $C$ are base points of the included series.

\medskip
Farkas~\cite{FSec} developed a dimension-theoretic understanding of spaces of secant planes to linear series using Eisenbud--Harris' theory of limit linear series. The first author \cite{Co1,Co2} counted these secant planes.


\medskip
In the intervening ten years, a couple of interesting developments have occurred. Osserman has developed an alternative theory of limit linear series, closely related to that of Eisenbud and Harris (and indeed the definitions are set-theoretically equivalent for curves of compact type). Osserman's theory has somewhat better functorial properties than that of Eisenbud and Harris; it is also more cumbersome to use. On the other hand, it has become increasingly clear that {\it chains of elliptic curves} behave well (with respect to linear series) as surrogates for a general curve. Our aim in this paper is put both of these developments to good use, and initiate a program for counting secant planes of linear series on a general curve, by first counting inclusions of limit linear series as in \eqref{inclusion_of_LLS} and then lifting the result to a general curve via a {\it smoothing} theorem.

\medskip
To prove our smoothing theorem, we appeal to the theory of limit linear series developed by Osserman in \cite{osserman2014limit}. Although our focus is on limit linear series over curves of compact type, Osserman's (more general) theory provides a natural template for constructing proper moduli spaces, and for proving smoothing theorems.
We in fact give {\it two} distinct constructions of a moduli space of inclusions of (limit) linear series; the first space is proper, while the second (which agrees set-theoretically with the first along an open locus) has a structure more amenable to local dimension estimates. The second construction makes use of {\it linked chains of flags}, which we introduce as (flagged) generalizations of the linked chains described by Murray and Osserman. Associated with linked chains of flags are {\it linked determinantal loci}, of which inclusions of limit linear series \eqref{inclusion_of_LLS} are archetypal examples.

\medskip
When it comes to {\it counting} (inclusions of) limit linear series, Eisenbud--Harris' theory of limit linear series is more combinatorially advantageous. On the other hand, it is known that the two theories (schematically) agree when we restrict ourselves to {\it refined} limit linear series, a fact that we will leverage to count $d$-secant $(d-r-1)$-planes to (the image of) a fixed linear series $g^s_m$ of well-behaved combinatorial type on a general curve when the spaces of linear series $g^s_m$ (resp, inclusions \eqref{inclusion_of_LLS} associated with a fixed $g^s_m$) are both zero-dimensional. Our method produces enumerative formulas whose structure is qualitatively different from the usual formulas obtained via classical intersection theory described in \cite{ACGH} and \cite{Co1,Co2}. Crucially, the formulas that arise from inclusions of limit linear series on elliptic chains are manifestly {\it positive}.

\subsection{Roadmap}
The plan for the remainder of this paper is as follows. In Section~\ref{sec:basic}, we review Osserman's theory of limit linear series; for the reader's convenience, in Proposition~\ref{equiv} we briefly recall the equivalence between Osserman and Eisenbud--Harris limit linear series on a fixed curve $X$ when $X$ is of compact type. Section~\ref{sec:linked} develops the theory of linked chains of flags. A key subsidiary notion is that of {\it $\ul{r}$-strictness}, in which $\ul{r}$ is a rank vector. Theorem~\ref{thm:dimension of linked determinantal loci} establishes that a linked determinantal locus has the expected dimension along its $\ul{r}$-strict locus. 

\medskip
In Section~\ref{sec:moduli}, we first provide a construction of the moduli scheme of inclusion of linear series on a non-singular curve in Proposition \ref{prop:presentation}. After giving our definition of inclusion of limit linear series in Definition \ref{defn:inc_lls}, we describe two constructions of a projective moduli scheme of inclusion of limit linear series in subsections \ref{subsubsec: scheme structure of moduli limit} and \ref{subsubsec: alternate construction}. The upshot is that the induced reduced structure of two moduli schemes agree. Using the second construction, we then manage to prove a smoothing theorem for inclusion of limit linear series in the case $\rho=\mu=0$ (Theorem \ref{thm:smoothing theorem}), provided that the ambient series has certain nice properties. This is the theoretical foundation for enumerating inclusion of linear series via degeneration.

\medskip
In Section~\ref{combinatorics_of_LLS}, we give a general algorithm for counting inclusions \eqref{inclusion_of_LLS} of limit linear series in cases where the two fundamental invariants $\rho$ (which computes the dimension of the space of $g^s_m$) and $\mu$ (which computes the dimension of the space of inclusions \eqref{inclusion_of_LLS} associated with a fixed $g^s_m$) are both equal to zero. Our results, like the classical intersection-theoretic results are most explicit when $r=1$; see Theorem~\ref{equality_of_numbers}. We introduce the notion of {\it shift poset}, which precisely dictates how base points force the included $g^{s-d+r}_m$, viewed as a point of the Grassmannian $\mbox{Gr}(s-d+r+1,s+1)$, to move. It seems reasonable to speculate that the coefficients in our formulas are related in some as-yet-unknown way to Schubert calculus.

\medskip
Finally, in Section~\ref{pathology}, we present an example due to Melody Chan, which shows that the moduli space of included limit linear series we constructed in Section~\ref{sec:moduli} may have components of unexpectedly large dimension. A deeper understanding of this phenomenon as it relates to smoothability is crucial to extending the enumerative analysis carried out in this paper to situations in which the {\it total} dimension $\rho+\mu$ of the space of linear series with secant planes is zero, but $\rho$ is strictly positive.

\subsection{Acknowledgements} We thank Melody Chan, Javier Gargiulo, Alberto L\'opez, Brian Osserman and Nathan Pflueger for useful conversations; the Brazilian CNPq, whose postdoctoral scheme allowed the first and third authors to meet; and the anonynomous referee, for his/her careful reading and corrections.

\section{Basic definitions}\label{sec:basic}

Throughout this section, $X_0$ will denote a proper, reduced and connected nodal curve, whose irreducible components are all smooth. In particular, we do not assume $X_0$ to be of compact type a priori. Notationally, 
$d$ will always denote a positive integer, while $\Gamma$ will always denote the dual graph of $X_0$. For the sake of convenience, we assume $\Gamma$ is directed. 
Directedness is combinatorially required by our specifying a reference component for $X_0$. For example, when $X_0$ is of compact type, it is natural to specify that $\Gamma$ be a rooted tree, with all edges pointing away from the root. The irreducible components of $X_0$ are always denoted by $(C_v)_{v\in V(\Gamma)}$, unless stated otherwise.

The main premise of Osserman's theory of limit linear series is that one should consider all possible multidegrees of line bundles on a nodal curve, and not just those {\it concentrated} multidegrees considered in the classical theory of Eisenbud and Harris.\footnote{For example, if $X_0$ is compact type, a line bundle on $X_0$ with concentrated mutidegree restricts to a degree zero line bundle over all but one component.} While this extra freedom comes at the expense of additional complexity, it will turn out to be crucial in studying inclusions of limit linear series with base points.  

We now review the basic ingredients of Osserman's theory.
\begin{defn}
The \textit{multidegree} of a line bundle $\mL$ over $X_0$, $\mtd(\mL)$, is the integer-valued function $V(\Gamma)\to\ZZ:v\mapsto \deg(\mL|_{C_v})$. The \textit{total degree} of $\mL$ is then $\sum_{v\in V(\Gamma)}\mtd(\mL)(v)$. 
\end{defn}

Distinct line bundles of the same total degree may be limits on the special fiber of a single line bundle on the generic fiber. To systematize how line bundles of the same total degree relate to one another in this way, we start by fixing a choice of {\it pseudo-divisors} $\{(\mO_v,C_v,s_v)\}_{v\in V(\Gamma)}$ over $X_0$. Here $\mO_v$ is an invertible sheaf on the nodal curve $X_0$, $C_v$ is the irreducible component of $X_0$ corresponding to the vertex $v$ in the dual graph $\Gamma$, and $s_v$ is a section of $\mO_v$. They satisfy the following conditions:
\begin{enumerate}
  \item[1.] $\mO_v|_{C_v}\cong \mO_{C_v}(-(C_v\cap C^c_v))$, $\mO_v|_{C^c_v}\cong \mO_{C^c_v}(C_v\cap C^c_v)$;
  \item[2.] $\bigotimes_{v\in V(\Gamma)}\mO_v\cong\mO_{X_0}$;
  \item[3.] $s_v\in\Gamma(X_0,\mO_v)$ is a section vanishing precisely along $C_v$.
\end{enumerate}  
Following \cite{osserman2014limit}, we call $\{(\mO_v,C_v,s_v)\}_{v\in V(\Gamma)}$ an \textit{enriched structure} on $X_0$. 

\begin{nt}
Given $\{\mO_v\}_{v\in V(\Gamma)}$, let $f_v:\mL\to\mL\otimes\mO_v$ denote the natural morphism sending $s$ to $s\otimes s_v$. More generally, for any sequence $\vec{v}:=(v_1,...,v_n)\in V(\Gamma)^{\times n}$, let $f_{\vec{v}}:\mL\to\mL\otimes\bigotimes_{j=1}^n\mO_{v_j}$ denote the morphism defined by $s\mapsto s\otimes s_{v_1}\otimes...\otimes s_{v_n}$.
\end{nt}  

\begin{defn}
Two multidegrees $\mtd_1$ and $\mtd_2$ are \textit{similar}, as denoted by $\mtd_1\sim\mtd_2$, whenever $\mtd_1-\mtd_2=\sum_{j=1}^n \mtd(\mO_{v_j})$, for some choice of $v_1,...,v_n\in V(\Gamma)$.

\medskip
Similarly, two line bundles $\mL_1,\mL_2$ are \textit{similar} whenever $\mL_2\cong\mL_1\otimes (\bigotimes_{j=1}^n\mO_{v_j})$ for some $v_1,...,v_n\in V(\Gamma)$.
\end{defn}
\begin{rem}\label{unique-path}
Given $\mL$ and a multidegree $\mtd_0$ similar to $\mtd(\mL)$, there is up to isomorphism precisely one line bundle $\mL^{\pr}$ for which $\mL^{\pr}\sim\mL$ and $\mtd(\mL')=\mtd_0$. If we further require that $\mL^{\pr}=\mL\otimes (\bigotimes_{j=1}^n\mO_{v_j})$ such that $n$ is minimal, then the twist $\mL^{\pr}$ is \textbf{unique}; see \cite[Prop. 2.12]{osserman2014limit}.
\end{rem}
\begin{nt}\label{minimal}
Let $\mtd_1$, $\mtd_2$ be two similar multi-degrees. Let $\mO_{\mtd_2-\mtd_1}$ denote the tensor product $\bigotimes_{i=1}^n\mO_i$ of line bundles $\mO_i\in\{\mO_v\}_{v\in V(\Gamma)}$ for which 
\[
\mL\ot\mO_{\mtd_2-\mtd_1}
\]
is the unique line bundle of multi-degree $\mtd_2$ similar to $\mL$ whenever $\mL$ is of multi-degree $\mtd_1$, as per Remark ~\ref{unique-path}. 
\end{nt}

\begin{nt}
Given a line bundle $\mL$ over $X_0$ and for any multidegree $\mtd_0$ similar to $\mtd(\mL)$, let $\mL(\mtd_0)$ denote the line bundle of multidegree $\mtd_0$ and similar to $\mL$. 
\end{nt}

\begin{rem}\label{morphism}
Remark~\ref{unique-path} has the following useful consequence. Given two similar line bundles $\mL$ and $\mL^{\pr}$, let $n$ be the smallest integer for which there exist $v_1,...,v_n\in V(\Gamma)^{\times n}$ and 
\[
\mL^{\pr}\cong \mL\ot(\bigotimes_{j=1}^n\mO_{v_j}).
\]
There is then a well-defined morphism 
\[
\mL\to\mL^{\pr}
\]
that maps a section $s$ to $s\ot s_{v_1}\ot...\ot s_{v_n}$.
\end{rem}

\begin{defn}
Given similar line bundles $\mL$ and $\mL^{\pr}$, we shall refer to the morphism defined in \ref{morphism} as the {\it natural} morphism from $\mL$ to $\mL^{\pr}$.
\end{defn}

\begin{lem}\label{similarity_lemma}
Similarity of line bundles is an equivalence relation. Further, whenever $\mL_1$ and $\mL_2$ are similar to one another, the natural morphisms $f:\mL_1\to\mL_2$ and $g:\mL_2\to\mL_1$ satisfy
\[
f\circ g=0 \text{ and } g\circ f=0.
\]
\end{lem}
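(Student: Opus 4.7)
The plan is to dispose of the three equivalence-relation axioms quickly and spend most of the effort on the composition identity, which I would prove by a multidegree calculation on the dual graph $\Ga$. Reflexivity is the $n=0$ case (empty tensor product), and transitivity follows by concatenating the two defining sequences of vertices. For symmetry, I would invoke condition (2) of an enriched structure, $\bigotimes_{v\in V(\Ga)}\mO_v\cong\mO_{X_0}$, which yields $\mO_v^{-1}\cong\bigotimes_{w\neq v}\mO_w$; inverting the relation $\mL_2\cong\mL_1\otimes\bigotimes_j\mO_{v_j}$ term by term then expresses $\mL_1$ as $\mL_2$ tensored with a product of $\mO_w$'s, as required.

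For the composition identity, Remark~\ref{unique-path} produces unique minimal multisets $A,B$ of vertices of $\Ga$ with $\mL_2\cong\mL_1\otimes\bigotimes_{v\in A}\mO_v$ and $\mL_1\cong\mL_2\otimes\bigotimes_{v\in B}\mO_v$, and by Remark~\ref{morphism} the natural morphisms $f$ and $g$ are multiplication by $\bigotimes_{v\in A}s_v$ and $\bigotimes_{v\in B}s_v$ respectively. Consequently $g\circ f\colon\mL_1\to\mL_1$ is multiplication by the global section $\bigotimes_{v\in A\sqcup B}s_v$ of $\bigotimes_{v\in A\sqcup B}\mO_v\cong\mO_{X_0}$, where $\sqcup$ denotes multiset sum. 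Letting $n_v$ be the multiplicity of $v$ in $A\sqcup B$, this last isomorphism forces the multidegree of $\bigotimes_v\mO_v^{\otimes n_v}$ to vanish at every vertex, and unpacking the three defining relations of the enriched structure translates that vanishing into the balance equation
\[
\sum_{w\neq v}(n_v-n_w)\,e(v,w)=0
\]
at each $v$, where $e(v,w)$ is the number of nodes shared by $C_v$ and $C_w$.

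The heart of the argument is then a maximum-principle step: at a vertex $v_*$ maximizing $n_v$, every summand is non-negative, hence each is zero, forcing $n_w=n_{v_*}$ at every neighbor $w$; connectedness of $\Ga$ propagates this to all vertices, so $n_v\equiv n$ is constant. Provided $\mL_1\not\cong\mL_2$, at least one of $A,B$ is nonempty, giving $n\geq 1$, and the composite section $\bigotimes_v s_v^{\otimes n}$ vanishes along $\bigcup_v C_v=X_0$ and is therefore identically zero. Hence $g\circ f=0$, and $f\circ g=0$ by the symmetric argument. I expect the main obstacle to be the bookkeeping needed to turn the enriched-structure data into the vertex-by-vertex balance equation cleanly and then running the maximum principle; the rest is formal.
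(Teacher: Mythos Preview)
Your proof is correct, and for the equivalence-relation axioms it matches the paper's sketch exactly (the paper singles out the same symmetry trick via $\bigotimes_v\mO_v\cong\mO_{X_0}$).

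For the composition identity you take a genuinely more elaborate route than the paper. The paper's argument is a one-liner: $f$ and $g$ are each multiplication by a product of the sections $s_v$, and $s_v$ vanishes along $C_v$. The quickest way to finish from there is to note that $g\circ f$ is an endomorphism of the line bundle $\mL_1$, hence a global section of $\mO_{X_0}$; since $X_0$ is proper, reduced and connected, $H^0(X_0,\mO_{X_0})$ consists of constants, so once $g\circ f$ vanishes on a single component (any $C_v$ with $v\in A$) it is identically zero. Your multidegree/maximum-principle argument instead shows directly that the multiset $A\sqcup B$ hits every vertex with the same positive multiplicity $n$, so the composite section $\prod_v s_v^{\,n}$ vanishes on $\bigcup_v C_v=X_0$. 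This is heavier machinery than needed, but it is self-contained (it does not invoke the global-sections fact) and it extracts slightly more information, namely the precise shape of $A\sqcup B$. Either approach is fine; you might mention the shorter alternative as a remark.
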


\begin{proof}
The proof of Lemma~\ref{similarity_lemma} is given in \cite{osserman2014limit}, but for the convenience of the reader, we recall the basic idea here. Namely, if $\mL_2\cong \mL_1\ot\mO_v$, then 
\[
\mL_1\cong\mL_2\ot(\bigotimes_{v'\neq v}\mO_{v'})
\]
since $\bigotimes_{v\in V(\Gamma)}\mO_v\cong\mO_{X_0}$. Expanding upon this idea gives the first assertion. 

The second assertion follows from the fact that $f$ and $g$ are defined by multiplying by the sections $s_v$ given by the chosen pseudo-divisors $\{\mO_v,s_v\}$, and $s_v$ vanishes on $C_v$. 
\end{proof}

\begin{defn}
A \textit{line bundle class} of degree $d$ over $X_0$ is an equivalence class of degree $d$ line bundles; we let $[\mL]$ denote the class of $\mL$. A {\it morphism} from $[\mL_1]$ to $[\mL_2]$ of multi-degree $w$ consists of a family of morphisms 
\[
\{\mL_{w_1}\to \mL_{w_2} \}_{w_1,w_2}
\]
such that $w_2-w_1=w$ for all $w_1,w_2$ and $\mL_{w_k}$ varies over all line bundles similar to $\mL_k$, for $k=1,2$.
\end{defn}

Clearly any given morphism of line bundles induces a morphism between the corresponding classes, for a specific pair of multidegrees. 

\medskip
Osserman's definition of limit linear series, which applies to nodal curves not-necessarily of compact type, requires introducing Osserman's notion of a \textit{concentrated multidegree}. 
The latter is a generalization of the one found in Eisenbud-Harris' theory of limit linear series. In general, a concentrated multidegree may take negative values on some components. It may also be positive on more than one component if the curve is not of compact type.  Since the general definition does not play a role in our application, we refer the curious reader to \cite{osserman2014limit} for the precise definition of a concentrated multidegree as well as a brief discussion of the general situation. 

\begin{defn}\label{lls-defn}
A {\it limit $\g^r_d$ over $X_0$} consists of the following data $(\mL,(V_v)_{v\in V(\Gamma)})$.
\begin{enumerate}
  \item[1.] $\mL$ is a line bundle over $X_0$ of total degree $d$;
  \item[2.] For each $v\in V(\Gamma)$, there is some fixed choice of concentrated multidegree $\mtd_v\sim \mtd(\mL)$ such that $V_v\subset\Gamma(C_v,\mL(\mtd_v)|_{C_v})$ is an $(r+1)$-dimensional subspace of sections; 
  \item[3.] For all $\mtd_0\sim\mtd(\mL)$, the kernel of the linear map 
  \[\Gamma(X_0,\mL(\mtd_0))\to\displaystyle\bigoplus_{v\in V(\Gamma)}\Gamma (C_v,\mL(\mtd_v)|_{C_v})/V_v \]
  has dimension at least $r+1$.
  \end{enumerate}  
We also call $\mtd(\mL)$ \textit{the multidegree of the limit} $\g^r_d$.  
\end{defn}

\subsection{Equivalent formulation of the theory for curves of compact type}
In this section, we will show that the definition \eqref{lls-defn} of limit linear series given in the preceding section is in fact equivalent to the definition of Eisenbud--Harris whenever the underlying curve $X_0$ is of compact type. This will be useful for the enumerative applications to come later.

\begin{nt}\label{close_node}
Suppose $X_0$ is a curve of compact type and $v_0$ is the vertex in $\Ga=\Ga(X_0)$ corresponding to the base component which we choose from the outset.\footnote{See the discussion at the beginning of this section.} For all $v\neq v_0$, let $P^*_v$ denote the unique node on the component $C_v$ that corresponds to the unique incoming edge at $v\in\Gamma$. \footnote{More concretely, let $\ell$ denote the unique path in the directed dual graph $\Gamma$ from $v_0$ to $v$ and let $X_{\ell}$ denote the sub-curve of $X_0$ corresponding to $\ell$; then $P^*_v$ is the intersection of $C_v$ with the closure of its complement
inside $X_{\ell}$.}
\end{nt}

\begin{lem}\label{deg-sim}
When $X_0$ is of compact type, any two multidegrees $\mtd_1,\mtd_2$ of the same total degree $d$ are similar. 
\end{lem}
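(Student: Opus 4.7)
The plan is to combine two ingredients: a direct combinatorial description of the multidegrees $\mtd(\mO_v)$ coming from the enriched structure of $X_0$, and the identity $\sum_{v\in V(\Gamma)}\mtd(\mO_v)=0$ coming from condition (2). Together these reduce the statement to a standard fact about the integer span of the columns of a tree's Laplacian.

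First, since $X_0$ is of compact type, two distinct irreducible components meet in at most one node, so from condition (1) of the enriched structure I would read off that $\mtd(\mO_v)(v)=-\deg_\Gamma(v)$, that $\mtd(\mO_v)(w)=1$ whenever $w$ is adjacent to $v$ in $\Gamma$, and that $\mtd(\mO_v)(w)=0$ otherwise. In particular each $\mtd(\mO_v)$ is itself sum-zero, and for a leaf $v$ with unique neighbor $w$ it is the function taking value $-1$ at $v$ and $+1$ at $w$.

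Next, I would verify by induction on $|V(\Gamma)|$ that every function $\mtd\colon V(\Gamma)\to\ZZ$ with $\sum_v \mtd(v)=0$ lies in the $\ZZ$-span of $\{\mtd(\mO_v)\}_{v\in V(\Gamma)}$. Since $\Gamma$ is a tree, pick a leaf $v$ with unique neighbor $w$; then the function $\mtd+\mtd(v)\cdot\mtd(\mO_v)$ vanishes at $v$ and remains sum-zero on the subtree $\Gamma':=\Gamma\setminus\{v\}$, so inductively it is a $\ZZ$-combination of the pseudo-divisor multidegrees for $\Gamma'$. The only mild subtlety is the comparison with the $\Gamma$-counterparts: for every $u\in V(\Gamma')$ with $u\neq w$ the two agree (after extension by $0$ at $v$), whereas for $u=w$ they differ by exactly $-\mtd(\mO_v)$; this discrepancy is absorbed into the coefficient of $\mtd(\mO_v)$ when we lift the expression back to $\Gamma$.

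Finally, since $\mtd_1$ and $\mtd_2$ share a total degree, $\mtd_1-\mtd_2$ is sum-zero, so the previous paragraph yields $\mtd_1-\mtd_2=\sum_v c_v\,\mtd(\mO_v)$ with $c_v\in\ZZ$. The definition of similarity requires a nonnegative integer combination, and this last hurdle is dispatched by condition (2), which forces $\sum_{v\in V(\Gamma)}\mtd(\mO_v)=0$. Adding a sufficiently large multiple of this trivial relation shifts every $c_v$ into $\ZZ_{\geq 0}$ without altering $\mtd_1-\mtd_2$, and expanding with repetitions then gives the required presentation $\mtd_1-\mtd_2=\sum_{j=1}^n\mtd(\mO_{v_j})$. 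The main obstacle is simply the bookkeeping in the inductive step, but this is entirely routine.
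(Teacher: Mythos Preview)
Your argument is correct. Both your proof and the paper's reduce to the explicit description of $\mtd(\mO_v)$ as (minus) the $v$-th column of the Laplacian of the tree $\Gamma$, and then argue inductively over the tree structure. The organization differs, however. The paper first remarks that the statement is equivalent to the triviality of the Jacobian of a tree in the sense of Baker--Norine, and then gives a direct argument by downward induction on the distance from the root, maintaining nonnegative coefficients at every stage. You instead induct on $|V(\Gamma)|$ via leaf removal to show that the $\mtd(\mO_v)$ span the sum-zero lattice over $\ZZ$, and only afterwards invoke the relation $\sum_v \mtd(\mO_v)=0$ to shift all coefficients into $\ZZ_{\geq 0}$. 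Your two-step separation (integer span, then sign correction) is arguably cleaner bookkeeping and makes the role of condition~(2) more transparent; the paper's approach has the minor advantage of never leaving the nonnegative cone, and of situating the result in the broader chip-firing framework.
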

\begin{proof}
Any given multidegree of total degree on $X_0$ may be encoded as a {\it divisor} on the graph $\Ga$, in the sense of \cite{baker2007riemann}. The desired result then follows from the fact that similarity of multidegrees corresponds to linear equivalence of divisors on $\Ga$. But $\Ga$ is a tree, so any two divisors of $\Ga$ are linearly equivalent.

Alternatively, we may argue directly as follows (according to the dictionary of the preceding paragraph, this amounts to constructing an explicit linear equivalence of graphical divisors). Recall that $\Gamma$ is in fact a {\it directed} tree, with all edges pointing away from its root, $v_0$. It then suffices to show that an arbitrary multidegree $\mtd^{\pr}$ of total degree $d$ is similar to the concentrated multidegree $\mtd_0$, for which $\mtd_0(v_0)=d$ and $\mtd_0(v)=0$ for $v\neq v_0$. 

To do so, we make use of the enriched structure of $X_0$. Note that for $v\in V(\Gamma)$, $\mtd(\mO_v)$ takes value $-\val(v)$ at $v$, 1 at any $v^{\pr}$ adjacent to $v$, and 0 elsewhere. In particular, if $v$ is a leaf of $\Gamma$, it takes value  $-1$ at $v$ and 1 at the unique vertex $v^{\pr}$ adjacent to $v$.

On the other hand, for every vertex $v$, there is a unique directed path from $v_0$ to $v$ in $\Ga$. Let $d(v)$ be the number of edges on this path. The desired result follows by downward induction on $d(v)$. Namely, suppose $v_1,\dots,v_n$ are the vertices realizing the maximal $d(v)$ among all $v\in V(\Gamma)$, and let $v^{\pr}_j$ be the vertex adjacent to $v_j$. It is easy to see that for some choice of $m_j,n_j\ge 0$, 
\[
\mtd^{\pr}+\sum_j(n_j\mtd(\mO_{v_j})+m_j\mtd(\mO_{v^{\pr}_j}))
\]
has value zero at all $v_j$. In general, suppose $\mtd^{\pr}$ has value zero at all $v$ such that $d(v)>N$. There then exists some choice of $n_v\ge 0$ such that $\mtd^{\pr}+\sum_{v:d(v)\ge N-1}n_v\cd\mtd(\mO_v)$ has value zero at all $v$ with $d(v)\ge N$.
\end{proof}

\begin{rem}
Lemma~\ref{deg-sim} is clearly false for arbitrary nodal curves, as graphs with cycles have nontrivial Jacobians in general. 
\end{rem}


We now recall the Eisenbud-Harris definition of limit linear series for curves of compact type. 
\begin{defn}\label{EH-defn}
Assume $X_0$ is compact type. A {\it limit $\g^r_d$ over $X_0$} consists of the data $(\mL_v,V_v)_{v\in V(\Gamma)}$, where each $(\mL_v,V_v)$ is a $\g^r_d$ over the smooth curve $C_v$, satisfying the following compatibility condition for vanishing sequences at points of intersection $C_v\cap C_{v^{\pr}}=P$ of smooth components. Namely, when writing the vanishing sequence $\van_P(V_v)$ (resp. $\van_P(V_{v^{\pr}})$) as an increasing (resp. decreasing) sequence, we have
\[\van_P(V_v)+\van_P(V_{v'})\ge (d,d,...,d).\] 
\end{defn}

An important point is that for curves of compact type, the definitions \ref{EH-defn} and \ref{lls-defn} agree with one another.

\begin{prop}\label{equiv}
Let $X_0$ be a nodal curve of compact type. Then \ref{EH-defn} and \ref{lls-defn} are equivalent for any fixed multidegree. 
\end{prop}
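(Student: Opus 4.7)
The strategy is to construct an explicit two-way translation between the two sets of data and verify that Osserman's condition (3) is equivalent, node by node, to the Eisenbud--Harris vanishing-sequence compatibility.

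Given Osserman data $(\mL,(V_v)_{v\in V(\Gamma)})$ on a compact type $X_0$, set $\mL_v := \mL(\mtd_v)|_{C_v}$; one may take $\mtd_v$ to be the multidegree with value $d$ at $v$ and $0$ elsewhere, so each $(\mL_v,V_v)$ is a genuine $\g^r_d$ on the smooth curve $C_v$. Conversely, given EH data $(\mL_v,V_v)$, build $\mL$ by specifying its component-wise restrictions as the appropriate twists of the $\mL_v$ dictated by the chosen multidegree and the pseudo-divisor structure, with any compatible gluing at the nodes; gluing data does not affect restrictions to components, so this is always possible. Lemma~\ref{deg-sim} and Remark~\ref{unique-path} then guarantee that $\mL(\mtd_v)|_{C_v} \cong \mL_v$, so the EH subspaces produce Osserman aspects of the prescribed multidegree.

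The main content is the equivalence of the compatibility conditions. Fix an edge $vv'$ of $\Gamma$ with node $P = C_v \cap C_{v'}$, let $T_v, T_{v'}$ be the subtrees obtained by deleting this edge, and consider the one-parameter family of multidegrees $\mtd_0(a)$, $a = 0, 1, \ldots, d$, splitting the degree as $d-a$ on $T_v$ (concentrated at $v$) and $a$ on $T_{v'}$ (concentrated at $v'$). A short calculation with the natural morphisms of Remark~\ref{morphism} shows that a global section $\sigma \in \Gamma(X_0, \mL(\mtd_0(a)))$ has image $\sigma|_{C_v} \cdot t_P^a$ under the $v$-component of the map in condition (3), where $t_P$ is a local uniformizer on $C_v$ at $P$; a symmetric description holds on the $v'$-side with exponent $d-a$. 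Hence $\sigma$ lies in the kernel precisely when $\sigma|_{C_v}$ comes from a section of $V_v$ of vanishing order $\geq a$ at $P$, $\sigma|_{C_{v'}}$ from a section of $V_{v'}$ of vanishing order $\geq d-a$ at $P$, and the two are glued compatibly at $P$.

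Requiring a kernel of dimension $\geq r+1$ for every $a \in \{0, 1, \ldots, d\}$ yields, by a direct dimension count, the $r+1$ inequalities
\[
\van_P(V_v)_i + \van_P(V_{v'})_{r-i} \geq d \qquad (i = 0, \ldots, r),
\]
which is exactly the EH condition at $P$; the converse (producing $r+1$ independent kernel elements given these inequalities) is a symmetric count. The main technical obstacle is handling multidegrees $\mtd_0$ not of the shape $\mtd_0(a)$ for a single edge; here the tree structure of $\Gamma$ is essential, since any such $\mtd_0$ is related to some $\mtd_v$ by a sequence of single-edge shifts, as in the proof of Lemma~\ref{deg-sim}. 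Consequently, by induction on the leaves of $\Gamma$, the general equivalence reduces to the edge-by-edge analysis above.
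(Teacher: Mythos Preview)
The paper does not actually prove this statement: its entire proof is a one-line citation of Theorem~5.9 in \cite{osserman2014limit}, observing that compact type curves are a special case of Osserman's pseudo-compact type. So your proposal is genuinely different in kind --- you attempt to unwind the equivalence directly rather than defer to the literature --- and in that sense it is more informative than what the paper offers.

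Your sketch does capture the right mechanism. The intermediate multidegrees $\mtd_0(a)$ interpolate between $\mtd_v$ and $\mtd_{v'}$, and the kernel condition at each $a$ translates into a vanishing-order inequality at the node $P$; this is exactly how the two-component case works.

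That said, there are two real gaps. First, the map in Definition~\ref{lls-defn}(3) lands in a direct sum over \emph{all} vertices of $\Gamma$, not only $v$ and $v'$. You analyze only the $v$- and $v'$-components of the map and never say why, for your chosen $\mtd_0(a)$, the remaining summands impose no additional constraint. (They do not, essentially because the natural morphism $\mL(\mtd_0(a))\to\mL(\mtd_w)$ factors through $\mL(\mtd_v)$ or $\mL(\mtd_{v'})$ according to which side of the edge $w$ lies on --- but this must be argued.) Second, and more seriously, the direction EH $\Rightarrow$ Osserman requires verifying condition~(3) for \emph{every} multidegree $\mtd_0$, not only those of the special form $\mtd_0(a)$ attached to a single edge. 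Your final sentence asserts that ``induction on the leaves of $\Gamma$'' reduces the general case to the edge-by-edge one, but no such reduction is carried out; this is precisely where the substantive work in Osserman's proof lies, and it is not a formality. As written, your argument establishes Osserman $\Rightarrow$ EH but leaves the converse incomplete.
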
 

\begin{proof}
This is a special case of Theorem 5.9 in \cite{osserman2014limit}, where Osserman considered curves of \textit{pseudo-compact type}, a class of curves strictly containing curves of compact type.
\end{proof}
\subsection{Subbundles of push-forwards of vector bundles}
In order to facilitate the discussion of (limit) linear series in families, Osserman introduced the following notion of subbundles in \cite{osserman2014higherlimit}:
\begin{defn}\label{defn:subbundles}
Let $\pi:X\to B$ be a proper morphism that is locally of finite presentation, and let $E$ be a quasicoherent sheaf on
$X$, locally finitely presented and flat over $B$. A subsheaf $V$ is a {\it subbundle} of $\pi_*E$ if $V$ is locally free of finite rank, and for any $S\to B$, the natural pullback map $V_S\to \pi_{S*}E_S$ is injective.
\end{defn}
\begin{rem}\label{rem:subbundle}
It is straightforward to see that the map $V_S\to \pi_{S*}E_S$ realizes $V_S$ as a subbundle of $\pi_{S*}E_S$. 
\end{rem}
\section{Linked chains of flags}\label{sec:linked}
In this section we introduce {\it linked chains of flags}, which generalize the linked chains described in \cite{murray2016linked}. We give a dimension estimate for the linked determinantal locus associated to a linked chain of flags, which we will apply later to calculate the dimension of the moduli space of inclusion of (limit) linear series that we construct in Section~\ref{sec:moduli}; see also the proof of Theorem~\ref{thm:smoothing theorem}.

The basic set-up is as follows. Let $S$ be a scheme, and fix a choice of positive integers $d$ and $n$. Suppose that $\mathscr E_1,...,\mathscr E_n$ are vector bundles of rank $d$ on $S$ and that we are given morphisms 
$$f_i\colon \mathscr E_i\rightarrow\mathscr E_{i+1},
\ \ f^i\colon\mathscr E_{i+1}\rightarrow\mathscr E_i$$ for each $i=1,...,n-1.$ For each index $i=1,\dots,n$, suppose moreover that 
\[
\mathscr E^m_i\hookrightarrow\mathscr E^{m-1}_i\hookrightarrow\cdots\hookrightarrow\mathscr E^1_i=\mathscr E_i
\]
is a flag of subbundles of $\mathscr E_i$.

\begin{defn}\label{defn:linked chain of flags}
Given a sequence of numbers $d=d_1>\cdots>d_m>0$, 
and $s\in H^0(S,\mathscr O_S)$, we say that $( \{\mathscr E^j_\bullet\}_{1\leq j\leq m},f_\bullet,f^\bullet)$ is an \textbf{$s$-linked chain of flags} of index $(d_1,...,d_m)$ if

\begin{enumerate}
\item $\mbox{rank}(\mathscr E^j_i)=d_j$ for all $i,j$;
\item $$f_i(\mathscr E^j_i)\subset \mathscr E^j_{i+1} \mathrm{\ and\ } f^i(\mathscr E^j_{i+1})\subset \mathscr E^j_i$$ for every $1\leq i \leq n-1$ and $1\leq j\leq m$; and
\item the tuple $(\mathscr E^j_\bullet, f_\bullet, f^\bullet)$ is an $s$-linked chain in the sense of \cite{murray2016linked}.
\end{enumerate}
More precisely, for each $j$, when restricted to $(\mathscr E^j_\bullet)$, we have that (i) both $f_i\circ f^i$ and $f^i\circ f_i$ are scaling by $s$; (ii) $\ker f^i=\mbox{Im} f_i$ and $\ker f_i=\mbox{Im} f^i$ along the vanishing locus of $s$; and (iii) $\mbox{Im}f_i\cap \ker f_{i+1}=0$ and $\mbox{Im}f^{i+1}\cap\ker f^i=0$ along the vanishing locus of $s$.


\end{defn}

Now let $(\{\mathscr E^j_\bullet\}_{1\leq j\leq m},f_\bullet,f^\bullet)$ be an $s$-linked chain of flags of index $(d_1,...,d_m)$. Suppose that for the two extremal values $i=1,n$ we have (a compatible system of) morphisms 
\[
\begin{tikzcd} \mathscr E^m_i\dar{g^m_i} \rar& \mathscr E^{m-1}_i\rar\dar{g^{m-1}_i} &\cdots \dar{ }\rar&\mathscr E^1_i\dar{g^1_i}\\ 
\mathscr G^m_i\rar&\mathscr G^{m-1}_i\rar &\cdots\rar &\mathscr G^1 _i
\end{tikzcd}
\]
in which $\mathscr G^j_i$ is a vector bundle of rank $d_j-r^j_i$ for every $1\leq j\leq m$. 

\begin{defn}\label{defn:linked determinantal locus}
Let $\mathscr G^j_i$ and $g^j_i$ be as above, and let $r_1,...,r_m$ be a (non-strictly) decreasing sequence of nonnegative integers. The \textbf{linked determinantal locus} associated to the $s$-linked chain of flags $( \{\mathscr E^j_\bullet\}_{1\leq j\leq m},f_\bullet,f^\bullet)$ is the closed subscheme of $S$ along which the rank of the induced morphisms 
$$\pi^j_i\colon \mathscr E^j_i\rightarrow \mathscr G^j_1\oplus \mathscr G^j_n$$ is at most $d_j-r_j$ for all $1\leq j\leq m$ and $1\leq i\leq n$. In other words, it is the intersection of the $r_j$-th vanishing loci of the $\pi^j_i$ as $i$ and $j$ are allowed to vary.
\end{defn}

By the {\it $r_j$-th vanishing locus} of $\pi^j_i$ we mean the subscheme of $S$ cut out by all $(d_j-r_j+1)$-minors of $\pi^j_i$; see \cite[Definition B.1.1]{osserman2014higherlimit} for a formal definition. 

\begin{defn}\label{defn:strict linked chain of flags}
Given a tuple $\underline r=(r_m,...,r_1)$ of nonnegative integers as in Definition \ref{defn:linked determinantal locus}, an $s$-linked chain of flags $( \{\mathscr E^j_\bullet\}_{1\leq j\leq m},f_\bullet,f^\bullet)$, and $x\in S$, we set
\[
(K^j_i)_x:=\ker (f_i|_{(\mathscr E^j_i)_x})\oplus\mathrm{Im} (f_{i-1}|_{(\mathscr E^j_{i-1})_x})\mathrm{\ and \ }
(\widetilde K^j_i)_x:=\ker(\pi^j_i|_x)\cap (K^j_i)_x.
\]
We say that $( \{\mathscr E^j_\bullet\}_{1\leq j\leq m},f_\bullet,f^\bullet)$ is \textbf{$\underline r$-strict} at $x$ with respect to the maps $\pi^j_i$ of Definition~\ref{defn:linked determinantal locus} if, for all $1\leq j\leq m$ and $1\leq i\leq n$, we have
\begin{enumerate}
\item[(a)]
$(K^j_i)_x\cap (\mathscr E^l_i)_x=(K^l_i)_x$ for all $j$ and $l\geq j$; and 
\item[(b)]
$\sum_{i=1}^n\dim_{k(x)} (\widetilde K^j_i)_x \leq (n-1)r_j$ for every $2\leq j\leq m$.
\end{enumerate}
\end{defn}

Note that if $s$ is nonzero at $x$, then item (a) is satisfied automatically, and the strictness condition is equivalent to requiring that $\dim \ker(\pi^j_i|_x)\leq r_j$ for all $2\leq j\leq m$.

\begin{rem}\label{rem:strict linked chain of flags}
  For every $l< i$, set $f_{l,i}:=f_{i-1}\circ f_{i-2}\circ\cdots\circ f_l$ and $f^{l,i}:=f^l\circ f^{l+1}\circ\cdots\circ f^{i-1}$. According to Lemma 2.10 of \cite{murray2016linked}, there is a distinguished set of vectors $S^j_i\subset (\mathscr E^j_i)_x$ for which $$\mathrm{span}(S^j_i)\cap  (K^j_i)_x=\emptyset$$ and the images $\{f_{l,i}S^j_l\}_{l<i}\cup \{f^{i.l}S^j_l\}_{l>i}$ in $(\mathscr E^j_i)_x$ and $S^j_i$ are linearly independent and generate $(\mathscr E^j_l)_x$. Item (b) in the $\underline r$-strictness condition of Definition~\ref{defn:strict linked chain of flags} is used to ensure that, when $x$ is ($\underline r$-strict and) contained in the linked determinantal locus in Definition \ref{defn:linked determinantal locus},  we may similarly find, for every $1\leq j\leq m-1$, sets of vectors $\{\widetilde S^j_i\subset (\mathscr E^j_i)_x\}_{1\leq i\leq n}$ 
  that are disjoint with $(K^j_i)_x$, and whose images in $(\mathscr E^j_i)_x$ 
 are linearly independent and span a subspace of $\ker(\pi^j_i|_x)$ of dimension at least $r_j$. This actually follows from the fact that 
 $$\sum_{i=1}^n\dim_{k(x)}\ker(\pi^j_i|_x)\geq nr_j$$ 
 as we are free to choose $\widetilde S^j_i$ such that $\ker(\pi^j_i|_x)=\mathrm{span}\widetilde S^j_i\oplus (\widetilde K^j_i)_x$. Note that the fact that $\mathrm{span}(\widetilde S^j_i)\cap (K^j_i)_x=\emptyset$ ensures that the images of 
$ \widetilde S^j_\bullet$ in $(\mathscr E^j_i)_x$ are linearly independent. On the other hand, since 
 $$\dim_{k(x)}(\widetilde K^j_i)_x\geq \sum_{l\neq i}|\widetilde S^j_l|$$
we have that $$\sum_{i=1}^n\dim_{k(x)}(\widetilde K^j_i)_x\geq (n-1)\sum_{i=1}^n|\widetilde S^j_i|\geq (n-1)r_j.
$$
 Item (b) of Definition~\ref{defn:strict linked chain of flags} then implies that in fact $\dim_{k(x)}\ker(\pi^j_i|_x)=\sum_{i=1}^n|\widetilde S^j_i|=r_j$ for every $2\leq j\leq m.$ This along with item (a) guarantees that passing to the quotient $\{\mathscr E^j_\bullet/\ker\pi^l_\bullet\}_{1\leq j\leq l}$ induces a linked chain of flags over $\underline r$-strict points, for every $2\leq l\leq m$; see the proof of Theorem~\ref{thm:dimension of linked determinantal loci}.
\end{rem}

\begin{nonex}\label{ex:non-r-strict}
To illustrate Definition~\ref{defn:strict linked chain of flags}, we construct an example of a {\it non}-$\underline r$-strict point, as follows. Let $m=2$, $r_2=3$, and $n=4$, and let $X=X_1\cup X_2$ be the union of two rational curves glued along a point $P$. Choose parametrizations $x_{\ell}$ of $X_{\ell}$, $\ell=1,2$ such that $P$ corresponds to $\infty$ on each $X_l$, and prescribe line bundles $\mathscr L_i$ on $X$ according to their restrictions $\mathscr L_i|_{X_1}=
\mathcal O_{X_1}((4-i)P)$ and $\mathscr L_i|_{X_2}=\mathcal O_{X_2}((i-1)P)$, for $1\leq i\leq 4$. The global sections of $\mathscr L_i$ are then given by:
\[
\begin{split}
H^0(X,\mathscr L_1)&=\mathrm{span}\{(1,0),(x_1,0),(x_1^2,0),(x_1^3,1)\}; \\
H^0(X,\mathscr L_2)&=\mathrm{span}\{(1,0),(x_1,0),(x_1^2,x_2),(0,1)\}; \\
H^0(X,\mathscr L_3)&=\mathrm{span}\{(1,0),(x_1,x_2^2),(0,1),(0,x_2)\}; \text{ and} \\
H^0(X,\mathscr L_4)&=\mathrm{span}\{(1,x_2^3),(0,1),(0,x_2),(0,x_2^2)\}.
\end{split}
\]
Now let $\mathscr E^2_i$ be the pushforward of $\mathscr L_i$ to a point $x$. The (non-trivial) twisting maps between fibers of $\mathscr E^2_i$ are then given by:
\[
\begin{split}
&(1,0)\leftarrow (1,0)\leftarrow (1,0)\leftarrow (1,x_2^3); \\
&(x_1,0)\leftarrow (x_1,0)\leftarrow (x_1,x_2^2)\rightarrow (0,x_2^2); \\
&(x_1^2,0)\leftarrow (x_1^2,x_2)\rightarrow (0,x_2)\rightarrow (0,x_2); \text{ and} \\
&(x_1^3,1)\rightarrow (0,1)\rightarrow (0,1)\rightarrow (0,1).
\end{split}
\]
These maps define a 0-linked chain on $x$.
Now let $$\mathscr G^2_1=\mathscr E^2_1/\mathrm{span}\{(1,0),(x_1,0),(x_1^3,1)\}\mathrm{\ and\ }\mathscr G^2_4=\mathscr E^2_4/\mathrm{span}\{(1,x_2^3),(0,x_2),(0,1)\}.$$ It is easy to check that $\dim(\widetilde K^2_i)_x=2$ for $i=1,4$ and $\dim(\widetilde K^2_i)_x=3$ for $i=2,3$. Hence $x$ is not $\underline r$-strict. Even though $\dim\ker\pi^2_i=3$ for every $1\leq i\leq 4$, the quotients $\{\mathscr E^2_i/\ker\pi^2_i\}_{1\leq i\leq 4}$ do not comprise a linked chain.
\end{nonex}

\begin{thm}\label{thm:dimension of linked determinantal loci}
Suppose $S$ is Noetherian. Let $(\{\mathscr E^j_\bullet\}_{1\leq j\leq m},f_\bullet,f^\bullet)$ be a linked chain of flags, and let $\Delta \sub S$ denote the rank-$\underline r$ linked determinantal locus with respect to the induced morphisms $\pi^j_i$. Assume that $r_j+d_j-r^j_1-r^j_2\geq 0$ for every $j$. Then near any $\underline r$-strict point $x$ in $S$, $\Delta$ has codimension at most 
$$\sum_{j=1}^m(r_j-r_{j+1})(r_j+d_j-r^j_1-r^j_2)$$
where by convention we set $r_{m+1}=0$.
\end{thm}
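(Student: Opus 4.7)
I would proceed by induction on the flag depth $m$. The base case $m = 1$ is the dimension estimate for the linked determinantal locus of a single $s$-linked chain, due to Murray--Osserman \cite{murray2016linked}: the codimension near a strict point is at most $r_1(r_1 + d_1 - r^1_1 - r^1_2)$.

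For the inductive step, the strategy is to peel off the deepest flag index. Let $\Delta_m \subseteq S$ denote the locus cut out by just the rank-$r_m$ conditions $\operatorname{rank}(\pi^m_i) \leq d_m - r_m$, so that $\Delta \subseteq \Delta_m$. Item (b) of Definition~\ref{defn:strict linked chain of flags} at $j = m$ supplies the Murray--Osserman strictness needed to apply the base case to the sub-chain $(\mathscr E^m_\bullet, f_\bullet, f^\bullet)$ with its induced morphisms $\pi^m_i$, yielding
\[
\operatorname{codim}_x \Delta_m \leq r_m(r_m + d_m - r^m_1 - r^m_2),
\]
which is the $j = m$ term in the desired sum.

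The next step is to quotient out by the common kernel. By Remark~\ref{rem:strict linked chain of flags} applied at $j = m$, the $\underline r$-strictness of $x$ forces $\dim \ker \pi^m_i = r_m$ for every $i$, and the collection $\{\ker \pi^m_i\}_i$ fits together as a rank-$r_m$ sub-linked-chain of $\mathscr E^m_\bullet$. Item (a) of Definition~\ref{defn:strict linked chain of flags} places this subchain coherently inside each flag level $\mathscr E^j_\bullet$ with $j \leq m$, and flag-compatibility of the morphisms $g^j_i$ forces $\ker \pi^m_i \subseteq \ker \pi^j_i$. Consequently the quotients $\tilde{\mathscr E}^j_\bullet := \mathscr E^j_\bullet / \ker \pi^m_\bullet$ for $1 \leq j \leq m-1$ assemble into a new linked chain of flags with induced morphisms $\tilde\pi^j_i$ of the same rank as $\pi^j_i$, and with parameters $\tilde d_j = d_j - r_m$, $\tilde r_j = r_j - r_m$, $\tilde r^j_i = r^j_i - r_m$; its rank-$\underline{\tilde r}$ linked determinantal locus (inside $\Delta_m$) is exactly $\Delta$. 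Applying the inductive hypothesis, expanding, and observing that the $r_m$-shifts telescope (with the convention $\tilde r_m = 0$) gives
\[
\operatorname{codim}_{\Delta_m, x} \Delta \leq \sum_{j=1}^{m-1}(r_j - r_{j+1})(r_j + d_j - r^j_1 - r^j_2),
\]
which, summed with the bound on $\operatorname{codim}_x \Delta_m$, yields the claim.

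The main technical obstacle is showing that the quotient $(\{\tilde{\mathscr E}^j_\bullet\}_{j < m}, \tilde f_\bullet, \tilde f^\bullet)$, together with its induced $g$- and $\pi$-maps, is indeed a linked chain of flags of the stated numerical type that is $\underline{\tilde r}$-strict at $x$. One must check that the linking identities $\tilde f_i \circ \tilde f^i = \tilde f^i \circ \tilde f_i = s$ and the kernel/image conditions along $V(s)$ survive the quotient, that the quotient flag inclusions remain injective, and that items (a)--(b) of Definition~\ref{defn:strict linked chain of flags} transfer from the original chain to the quotient. All these compatibilities follow from careful bookkeeping using the description of $\ker \pi^m_\bullet$ in Remark~\ref{rem:strict linked chain of flags}, but the argument is delicate because $\ker \pi^m_\bullet$ is a sub-linked-chain rather than a sub-flag, so its interaction with the surrounding flag data $\mathscr E^m_\bullet \subseteq \cdots \subseteq \mathscr E^1_\bullet$ and with the $g^j_i$ must be tracked simultaneously.
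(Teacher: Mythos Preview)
Your approach is essentially the paper's: induct on $m$, apply the Murray--Osserman bound to the bottom flag level $\mathscr E^m_\bullet$ to control $\operatorname{codim}\Delta_m$, then strip off the rank-$r_m$ kernel and recurse on the remaining $m-1$ levels with shifted parameters $\tilde d_j=d_j-r_m$, $\tilde r_j=r_j-r_m$, $\tilde r^j_i=r^j_i-r_m$. The only presentational difference is that you phrase the reduction as a \emph{quotient} by $\ker\pi^m_\bullet$, whereas the paper builds an explicit \emph{complementary subsheaf} $(\mathscr E^j_i)'$ generated by lifts of independent sections $X^j_i$ disjoint from $(K^j_i)_x$; these are isomorphic constructions on the open locus where $\operatorname{rank}\pi^m_i$ is exactly $d_m-r_m$, and the paper's choice simply makes the linked-chain axioms for the new chain more transparent (since $(\mathscr E^j_i)'$ is by construction spanned by images of fixed sections under the twisting maps).

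One small correction: the Murray--Osserman codimension bound on $\Delta_m$ holds for \emph{every} irreducible component and requires no strictness hypothesis, so item~(b) at $j=m$ is not what feeds into that step. Rather, item~(b) for all $2\le j\le m$ is what ensures the quotient chain is $\underline{\tilde r}$-strict at $x$ (via the inequality $\sum_i\dim(\widetilde K^j_i)'_x\le\sum_i\dim(\widetilde K^j_i)_x-(n-1)r_m\le(n-1)\tilde r_j$), and item~(a) is what lets you extend the complementary sections from level $m$ up through the flag.
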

\begin{proof} We may assume $x$ lies in $\Delta$.
Let $S^{\pr}\subset S$ be the subscheme along which the morphism $\pi^m_i$ has rank at most $d_m-r_m$ for every $1\leq i\leq n$. According to \cite[Theorem 1.3]{murray2016linked}, every irreducible component of $S^{\pr}$ has codimension at most $r_m(r_m+d_m-r^m_1-r^m_2)$.


\medskip
Recall from Remark \ref{rem:strict linked chain of flags} that, for every $l< i$, we set $f_{l,i}:=f_{i-1}\circ f_{i-2}\circ\cdots\circ f_l$ and $f^{l,i}:=f^l\circ f^{l+1}\circ\cdots\circ f^{i-1}$. We begin by showing that there are distinguished linearly independent vectors $$x_{i,1},\dots,x_{i,k_i}; y_{i,1},\dots,y_{i,l_i}\subset (\mathscr E^m_i)_x$$ for which
\begin{enumerate}
\item[(1)] $\sum_{i=1}^nk_i+l_i=d_m$ and $\sum_{i=1}^n l_i=r_m$;
\item[(2a)] the set of vectors 
$$\{f_{a,i}(X^m_a\cup Y_a),f^{i,b}(X^m_b\cup Y_b))|a<i<b\}\cup X^m_i\cup Y_i$$
generates $(\mathscr E^m_i)_x$, where $X^m_i:=\{x_{i,1},...,x_{i,k_i}\},Y_i:=\{y_{i,1},...,y_{i,l_i}\}$; and
\item[(2b)] the set
$$\{f_{a,i}(Y_a),f^{i,b}( Y_b))|a<i<b\}\cup Y_i$$
generates $\ker(\pi^m_i|_x)$.
\end{enumerate}

\noindent Indeed, the first equations of condition (1) and (2a) are simultaneously satisfied whenever 
\[
\mathrm{span}(X^m_i)\oplus\mathrm{span}(Y_i)\oplus (K^m_i)_x=(\mathscr E^m_i)_x
\]
in which case we set $l_i=\dim \ker(\pi^m_i|_x)-\dim (\widetilde K^m_i)_x$, and choose any $l_i$ independent vectors $y_{i,\lambda}\in \ker(\pi^m_i|_x)\backslash \widetilde K^m_i$, $1\leq \lambda\leq l_i$. 
The set of vectors in (2b) then spans a subspace of $\ker(\pi^m_i|_x)$ of dimension $\sum_{i=1}^nl_i$, which is at least $r_m$ by $\underline r$-strictness. According to Remark~\ref{rem:strict linked chain of flags}, we have $
\dim \ker(\pi^m_i|_x)=r_m=\sum_{i=1}^nl_i$, from which the generation statement in (2b) follows. 

\vspace{10pt} For each $1\leq j \leq m-1$ we now extend $X^m_i$ inductively to $X^j_i$, say $X^j_i=X^{j+1}_i\cup Z^j_i$, such that $\mathrm{span}(X^j_i)\oplus \mathrm{span}(Y_i)\oplus (K^j_i)_x=(\mathscr E^j_i)_x$. This is possible because $x$ is $\underline r$-strict. Hence (2a) remains true when we replace $m$ by $j$. Further, we have $\sum_{i=1}^n|X^j_i|=d_j-r_m$. Let $d_j^{\pr}:=d_j-r_m$, $1\leq j\leq m-1$.

\vspace{10pt}
Let $U$ be an open neighborhood $U$ of $x$ in $S^{\pr}$ over which (the restriction of) each $
\mathscr E^j_i$ becomes free, the kernel of $\pi^m_i$ at each fiber has dimension exactly $r_m$, and each $X^j_i$ lifts to a section in $\mathscr E^j_i(U)$. Fix such a lift and let $\mathcal X_j$ denote the set of lifts of all $X^j_i$. Let $(\mathscr E^j_i)^{\pr}$ be the subsheaf of $\mathscr E^j_i$ over $U$ generated by the image of $\mathcal X_j$. We claim that, after shrinking $U$, there is an induced linked-chain-of-flags structure on $\{(\mathscr E^j_i)^{\pr}\}$ which serves as the ``quotient" of $\{\mathscr E^j_i\}$ by the kernel of $\pi^m_i$. 

\vspace{10pt}
To this end, we first restrict $U$ to the complement of the closed subscheme along which the maps 
$$\mathcal O_U^{\oplus d_j^{\pr}}\rightarrow \mathscr E^j_i$$
induced by the images of $\mathcal X_j$ in $\mathscr E^j_i(U)$
fail to have full rank. By the same argument as that used in \cite[Lemma 2.5]{osserman2011linked}, $(\mathscr E^j_i)^{\pr}$ is a free module over $U$ of rank $d_j^{\pr}$ and there is an inclusion of fibers $(\mathscr E^j_i)^{\pr}_y\hookrightarrow (\mathscr E^{j}_i)_y$ for every $y\in U$. It is straightforward to check that $(\{(\mathscr E^j_\bullet)^{\pr}\}_{1\leq j\leq m-1},f_\bullet,f^\bullet)$ is a linked chain of flags. 

Next shrink $U$ so that the map $\pi^m_i\colon (\mathscr E^m_i)^{\pr}\rightarrow \mathscr G^m_1\oplus\mathscr G^m_n$ has full rank on $U$. Further, assume the sub-bundle of $\mathscr E^j_i$ generated by the images of $\mathcal X_j\backslash \mathcal X_m$ has trivial intersection with $\mathscr E^m_i$, i.e. that the induced map from the sub-bundle to $
\mathscr E^j_i/\mathscr E^m_i$ has full rank. Then for every $1\leq j\leq m-1$, the map $\pi^j_i$ has kernel of dimension at least $r_j$ on $U$ if and only if $\pi^j_i|_{(\mathscr E^j_i)^{\pr}}$ has kernel of dimension at least $r_j^{\pr}=r_j-r_m$.

\vspace{10pt}
We now check that the linked chain of flags $( \{(\mathscr E^j_\bullet)^{\pr}\}_{1\leq j\leq m-1},f_\bullet,f^\bullet)$ is $\underline r^{\pr}$-strict at $x$. By construction, it is easy to see that $(K^j_i)^{\pr}_x\cap(\mathscr E^l_i)^{\pr}_x=(K^l_i)^{\pr}_x$ for every $l\geq j$. On the other hand, we have $(\widetilde K^j_i)^{\pr}_x\oplus \widetilde Y_i\subset (\widetilde K^j_i)_x$, where $\widetilde Y_i$ is the span of $\{f_{a,i}(Y_a),f^{i,b}( Y_b))|a<i,b>i\}$. Hence $$\sum_{i=1}^n\dim (\widetilde K^j_i)'_x\leq \sum_{i=1}^n(\widetilde K^j_i)_x-(n-1)r_m\leq (n-1)r_j^{\pr}$$
for $1\leq j\leq m-1$ because $x$ is $\underline r$-strict with respect to $(\{(\mathscr E^j_\bullet)\}_{1\leq j\leq m},f_\bullet,f^\bullet)$. 

\vspace{10pt}
By induction, the determinantal locus of $( \{(\mathscr E^j_\bullet)'\}_{1\leq j\leq m-1},f_\bullet,f^\bullet)$ has codimension at most 
$$\sum_{j=1}^{m-1}(r_j'-r_{j+1}')(r_j'+d_j'-(r^j_1)'-(r^j_2)')=\sum_{j=1}^{m-1}(r_j-r_{j+1})(r_j+d_j-r^j_1-r^j_2)$$
in $S^{\pr}$. It therefore has codimension at most 
$$\sum_{j=1}^{m-1}(r_j-r_{j+1})(r_j+d_j-r^j_1-r^j_2)+r_m(r_m+d_m-r^m_1-r^m_2)=\sum_{j=1}^m(r_j-r_{j+1})(r_j+d_j-r^j_1-r^j_2)$$
in $S$.
\end{proof}

\section{A moduli scheme for inclusions of (limit) linear series}\label{sec:moduli}

\subsection{Moduli of inclusions of linear series on a smooth curve}\label{subsec:moduli smooth}
In this section, we briefly review the moduli problem for inclusion of linear series over a smooth curve. The main point is to exhibit the moduli scheme as a determinantal locus. Throughout, $C$ will denote an irreducible, smooth, projective curve over some algebraically closed field $K$. 

Hereafter, we fix positive integers $d_1>d_2$ and $r_1>r_2$ and we denote the $(d_1-d_2)$-th symmetric product of $C$ by $C_{d_1-d_2}$. We let $G^{r_1}_{d_1}(C)$ denote the moduli scheme of linear series $\g^{r_1}_{d_1}$ over $C$. It is well-known that on a general curve, $G^{r_1}_{d_1}(C)$ is smooth of dimension 
\begin{equation}\label{eqn:rho}
\rho(g,r_1,d_1):=g-(r_1+1)(r_1+g-d_1).
\end{equation} 
Now set $d=d_1-d_2$, $r=r_2-r_1+d$, and
\begin{equation}\label{eqn:mu}
\mu(d,r_2,r_1):= d-r(r_1+1-d+r)= (d_1-d_2)-(d_1-d_2-r_1+r_2)(r_2+1).
\end{equation}
Given a linear series $(L,V) \in G^{r_1}_{d_1}(C)$, those $d$-tuples of points on $C$ that span $(d-r-1)$-dimensional subspaces of $\mb{P}V^{\ast}$ describe the sublocus of $C_d$ for which the evaluation map
$V \ra H^0(L/L(-p_1-\dots-p_d))$ has rank $(d-r)$. It follows that $\mu(d,r_2,r_1)$
is the {\it expected} dimension of the space of $(d_1-d_2)$-secant $(r_1-r_2-1)$-planes to the image of a fixed $\g^{r_1}_{d_1}$. With these preliminaries in place, we now rigorously define the functor of points associated to our moduli problem.
\begin{defn}\label{defn:inclusion_ls}
Let $C$ be a smooth, irreducible projective curve. 
For every $K$-scheme $S$, let $\cG^{r_1,r_2}_{d_1,d_2,C}(S)$ denote the set of equivalence classes of objects of the form $(Y,(L,V_2,V_1))$, where $Y$ is an effective divisor of relative degree $d_1-d_2$ on $C\times S$, $(L,V_1)$ is an $S$-family of linear series $\g^{r_1}_{d_1}$, and $(L(-D),V_2)$ is an $S$-family of series $\g^{r_2}_{d_2}$. Here $(Y,(L,V_2,V_1))$ and $(Y,(L^{\pr},V_2^{\pr},V^{\pr}_1))$ are equivalent whenever there exists a line bundle $F$ on $S$ and a line bundle isomorphism $\phi:L\to L^{\pr}\ot p^*F$ for which $p_*\phi(V_i)=V^{\pr}_i$, for $i=1,2$.
\end{defn} 
We shall define a determinantal scheme $G^{r_1,r_2}_{d_1,d_2}(C)$ that represents $\cG^{r_1,r_2}_{d_1,d_2}(C):(Sch/K) \to (Sets)$. To do so, we begin by reserving the following notation for future reference. 

\begin{itemize}
\item Let $\mL$ denote a Poincar\'e line bundle over $C\times \Pic^d_1(C)$ and let $\ti{\mL}$ denote its pullback to $C\times C_{d_1-d_2}\times \Pic^{d_1}(C)$. 
\item Fix a reduced effective divisor $Z$ of $C$ of sufficiently large degree and $Z^{\pr}$ and $Z^{\pr \pr}$ denote the respective pullbacks $Z^{\pr}=Z\times \Pic^{d_1}(C)$, $Z^{\pr \pr}=Z\times C_{d_1-d_2}\times\Pic^{d_1}(C)$.
\item Let $E\subset C\times C_{d_1-d_2}\times \Pic^{d_1}(C)$ denote the pull-back of the universal divisor of degree $d_1-d_2$.
\item Let $\fl_{r_1,r_2}(\pi_*\mL(Z^{\pr}))$ denote the flag scheme over $\Pic^{d_1}(C)$ of two-term flags of subbundles of $\pi_*\mL(Z^{\pr})$ and let $\mV_{\bullet}=\mV_1\supset\mV_2$ denote the universal flag over it.
\item Set $X:=(C_{d_1-d_2}\times\Pic^{d_1}(C))\times_{\Pic^{d_1}(C)}\fl_{r_1,r_2}(\pi_*\mL(Z'))$, and let $\ti{\mV}_{\bullet}$ denote the pullback of $\mV_{\bullet}$ to $X$. 
\end{itemize}
These objects fit into the following commutative diagram:
\[\begin{tikzcd}
&&X\arrow{dl}[swap]{q_1}\ar[dr,"q_2"]&\\
C\times C_{d_1-d_2}\times\Pic^{d_1}(C)\ar[r,"\ti{\pi}"]\ar[d,"p"]&C_{d_1-d_2}\times\Pic^{d_1}(C)\ar[dr,"u"]&&\fl_{r_1,r_2}(\pi_*\mL(Z'))\arrow{dl}[swap]{s}\\
C\times\Pic^{d_1}(C)\ar[rr,"\pi"]&&\Pic^{d_1}(C)
\end{tikzcd}\]
\begin{defn}\label{defn:smooth_moduli}
With notation as 
above, let $G^{r_1,r_2}_{d_1,d_2}(C)$ denote the subscheme of $X$ defined the intersection of the following two determinantal conditions:
\begin{enumerate}
\item $\ti{\mV}_1\to t^*\pi_*(\mL(Z')|_{Z'})$ is zero, where $t=u\circ q_1=s\circ q_2$; and
\item $\ti{\mV}_2\to q^*_1\ti{\pi}_*\ti{\mL}(Z'')/q^*_1\ti{\pi}_*\ti{\mL}(Z''-E )$ is zero.
\end{enumerate}
\end{defn}  
We briefly justify the existence of the morphism involved in the second condition, as follows. The natural morphism $\mV_2\hookrightarrow\mV_1\to s^*\pi_*\mL(Z')$ pulls back to a morphism $\ti{\mV}_2\to  t^*\pi_*\mL(Z')$. Pulling back the natural map $u^*\pi_*\mL(Z')\to \ti{\pi}_*p^*\mL(Z')=\ti{\pi}_*\ti{\mL}(Z'')$ along $q_1$, we get a map $t^*\pi_*\mL(Z')\to q^*_1\ti{\pi}_*\ti{\mL}(Z'')$. Composing this map with the previous one, we get a map $\ti{\mV}_2\to q^*_1\ti{\pi}_*\ti{\mL}(Z'')$, which induces the map in item (2).
\begin{prop}\label{prop:presentation}
The functor $\cG^{r_1,r_2}_{d_1,d_2}(C):(Sch/K) \to (Sets)$ is represented by $G^{r_1,r_2}_{d_1,d_2}(C)$. 
\end{prop}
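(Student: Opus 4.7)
The plan is to exhibit mutually inverse natural transformations between $\mathrm{Hom}(-,G^{r_1,r_2}_{d_1,d_2}(C))$ and $\cG^{r_1,r_2}_{d_1,d_2}(C)$. First I fix, once and for all, $Z$ of degree large enough that $H^1(C,L'(Z))=0$ for every $L'\in\Pic^{d_1}(C)$ and that cohomology and base change apply to every family arising below; in particular $\pi_*\mL(Z')$ is locally free and its formation commutes with arbitrary base change, and the same holds for the analogous sheaves in the diagram preceding Definition~\ref{defn:smooth_moduli}.

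Given a morphism $f\colon S\to G^{r_1,r_2}_{d_1,d_2}(C)\subset X$, composition with the projections to $C_{d_1-d_2}$, $\Pic^{d_1}(C)$, and $\fl_{r_1,r_2}(\pi_*\mL(Z'))$ yields a relative divisor $Y$ of degree $d_1-d_2$ on $C\times S$, a line bundle $\widetilde L$ on $C\times S$ (obtained by pulling back $\mL$; well-defined up to $p^*F$ for some line bundle $F$ on $S$), and a two-term flag of subbundles $\widetilde{\mV}_{2,S}\subset \widetilde{\mV}_{1,S}\subset \pi_{S*}\widetilde L(Z\times S)$. The first determinantal condition of Definition~\ref{defn:smooth_moduli} says precisely that $\widetilde{\mV}_{1,S}$ lies in the kernel of the restriction map $\pi_{S*}\widetilde L(Z\times S)\to \pi_{S*}(\widetilde L(Z\times S)|_{Z\times S})$, which, by the choice of $Z$ and base change, equals $\pi_{S*}\widetilde L$; hence $\widetilde{\mV}_{1,S}$ is a rank $(r_1+1)$ subbundle of $\pi_{S*}\widetilde L$. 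The second determinantal condition says $\widetilde{\mV}_{2,S}$ further lies in $\pi_{S*}\widetilde L(Z\times S-Y)$, i.e.\ is a rank $(r_2+1)$ subbundle of $\pi_{S*}\widetilde L(-Y)$ via the canonical identification with $\pi_{S*}(\widetilde L\otimes \mO(-Y))$ after twisting up by $Z$. Thus setting $V_i=\widetilde{\mV}_{i,S}$ and $L=\widetilde L$ gives an object of $\cG^{r_1,r_2}_{d_1,d_2}(C)(S)$, and different choices of Poincar\'e bundle $\mL$ differ only by the pullback of a line bundle from the base, so the output is well-defined up to the equivalence in Definition~\ref{defn:inclusion_ls}.

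For the inverse direction, start with a representative $(Y,(L,V_2,V_1))$ over $S$. The divisor $Y$ yields $S\to C_{d_1-d_2}$ and $L$ yields $S\to\Pic^{d_1}(C)$, canonically up to a twist $L=\widetilde L\otimes p^*F$ with $\widetilde L$ the pullback of $\mL$; after replacing the $V_i$ by $V_i\otimes F^{-1}$ we may take $L=\widetilde L$. Tensoring the inclusions $V_2\hookrightarrow \pi_{S*}L(-Y)$, $V_1\hookrightarrow \pi_{S*}L$ with $\mO(Z)$ and using the canonical maps $\pi_{S*}L\hookrightarrow \pi_{S*}L(Z\times S)$ and $\pi_{S*}L(-Y)\hookrightarrow \pi_{S*}L$ produces a flag $V_2\subset V_1\subset \pi_{S*}L(Z\times S)$; by base change this sheaf is the pullback of $\pi_*\mL(Z')$, so the flag defines a morphism $S\to \fl_{r_1,r_2}(\pi_*\mL(Z'))$ lying over the map to $\Pic^{d_1}(C)$. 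The combined morphism $S\to X$ automatically satisfies both determinantal conditions: condition (1) because $V_1\subset \pi_{S*}L$ is the subsheaf annihilated by restriction to $Z$, and condition (2) because sections in $V_2$ are, by construction, sections of $L(-Y)$ and hence vanish on $Y$.

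Finally I will verify that the two constructions are mutually inverse, which comes down to the identifications already used: starting from a map to $X$, extracting $(Y,(L,V_2,V_1))$, and reinserting these into the flag recovers the original morphism because the inclusion $\pi_{S*}L\hookrightarrow \pi_{S*}L(Z\times S)$ is intrinsic, while starting from $(Y,(L,V_2,V_1))$ and reading off the universal data reproduces it up to the equivalence relation from the choice of Poincar\'e bundle. The main technical obstacle is the line bundle ambiguity intertwined with verifying that the two determinantal conditions exactly cut out the locus representing the functor: this hinges on choosing $Z$ large enough that the short exact sequences $0\to \pi_*\mL\to \pi_*\mL(Z')\to \pi_*(\mL(Z')|_{Z'})\to 0$ and $0\to \ti\pi_*\ti\mL(Z''-E)\to\ti\pi_*\ti\mL(Z'')\to \ti\pi_*(\ti\mL(Z'')|_E)\to 0$ remain exact and compatible with arbitrary base change, which is precisely where cohomology and base change must be invoked carefully.
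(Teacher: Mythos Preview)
Your proposal is correct and follows essentially the same approach as the paper: extract $(Y,L,V_2,V_1)$ from a map $S\to X$ via the universal properties of the factors, then use condition (1) to force $V_1\subset\pi_{S*}L$ and condition (2) to force $V_2\subset\pi_{S*}L(-Y)$. If anything, your argument is more complete than the paper's, which only spells out the direction from $\mathrm{Hom}(-,G^{r_1,r_2}_{d_1,d_2}(C))$ to $\cG^{r_1,r_2}_{d_1,d_2}(C)$ and leaves the inverse construction and the mutual-inverse check implicit; your explicit treatment of the Poincar\'e-bundle ambiguity and of the cohomology-and-base-change hypotheses needed for the short exact sequences is a welcome addition.
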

\begin{proof}
We need to check that each $S$-valued point of our functor defines an $S$-family $(Y,(L,V_2,V_1))$ of inclusion of linear series as specified in Definition~\ref{defn:inclusion_ls}. To this end, fix a morphism $f:S\to G^{r_1,r_2}_{d_1,d_2}(C)$. For the sake of clarity, we update our previous commutative diagram to
\[\begin{tikzcd}
C\times S\ar[r,"\pi_S"]\ar[d]&S\ar[d]\ar[r,"f"]&G^{r_1.r_2}_{d_1,d_2}(C)\arrow{dl}[swap]{q_1}\ar[d,"q_2"]&\\
C\times C_{d_1-d_2}\times\Pic^{d_1}(C)\ar[r,"\ti{\pi}"]\ar[d,"p"]&C_{d_1-d_2}\times\Pic^{d_1}(C)\ar[d,"u"]&\fl_{r_1,r_2}(\pi_*\mL(Z'))\arrow{dl}[swap]{s}\\
C\times\Pic^{d_1}(C)\ar[r,"\pi"]&\Pic^{d_1}(C)&
\end{tikzcd}\]
Notice that the two rectangles on the left are Cartesian.

By appealing to the universal properties of $C_{d_1-d_2}$, $\Pic^{d_1}(C)$ and $\fl_{r_1,r_2}(\pi_*\mL(Z^{\pr}))$, respectively, and composing $f$ with the relevant morphisms, we get a tuple of the form $(Y,L,V_2,V_1)$, where $Y$ is a relative divisor of degree $d_1-d_2$, $L$ is a line bundle of relative degree $d_1$ over $C\times S$ and $V_2\subset V_1\subset (\pi_*\mL(Z'))_S$. In particular, we have $V_i=(\mV_i)_S$ (where the restriction to $S$ is via pullback along $q_2\circ f$), $L=(\mL)_{C\times S}$ (via pullback along the left vertical arrow), and $Y=(Z^{\pr})_{C\times S}$ (via pullback along $C\times S\to C\times C_{d_1-d_2}\times\Pic^{d_1}(C)\to C\times C_{d_1-d_2}$). 

We then need to check that $(L,V_1)$ and $(L(-Y),V_2)$ are $S$-families of $\g^{r_1}_{d_1}$ and $\g^{r_2}_{d_2}$, respectively. Condition (1) in Definition \ref{defn:smooth_moduli} implies that the natural morphism $V_1\to \pi_{S^*}((\mL(Z^{\pr})|_{Z^{\pr}})_{C\times S})$ is zero; since $V_1=(\mV_1)_S$ is a subbundle of $\pi_{S^*}\mL(Z^{\pr})_{C\times S}$ to begin with (see Remark \ref{rem:subbundle}), it follows that $V_1$ is a sub-bundle of $\pi_{S^*}\mL_{C\times S}$ of rank $r_1+1$ in the sense of Definition \ref{defn:subbundles}, i.e. $(L,V_1)$ is an $S$-family of $\g^{r_1}_{d_1}$. It follows that $V_2$ is a subbundle of $\pi_{S^*}\mL_{C\times S}$, and condition (2) in Definition \ref{defn:smooth_moduli} implies that $V_2\hookrightarrow \pi_{S^*}\mL_{C\times S}$ factors through $\pi_{S^*}\mL_{C\times S}(-Y)$ and hence $(L(-Y),V_2)$ is an $S$-family of $\g^{r_2}_{d_2}$.
\end{proof}

\subsection{Inclusions of limit linear series}
Motivated by the theory on smooth curves, we now give a functorial definition of the main object of study of this paper. Note that in general it is not natural to assume that our objects consist of limit $\g^{r_2}_{d_2}$ embedded in limit $\g^{r_1}_{d_1}$, where $d_2<d_1$. A priori, we also want to take into consideration objects of the form $(\mL\ot\mathcal{I}_Z,\{V'_v\}_v)$ embedded inside $(\mL,\{V_v\}_v)$, where $\cI_Z$ is the ideal sheaf of a non-Cartier effective divisor of a nodal curve. 

We make the following assumption on the (families of) curves we consider in the remainder of the paper:
\begin{nt}\label{nt:curves}
Hereafter, $X/B$ denotes a regular smoothing family in the sense of \cite[Definition 3.9]{osserman2014limit}; in particular, $X$ is projective over $B$ (\cite[\S 2]{lichtenbaum1968curves}). We assume that the special fiber $X_0$, whose dual graph we denote by $\Gamma$, is of compact type and equipped with a fixed enriched structure $$(\mathscr O_v)_{v\in V(\Gamma)}=(\mathscr O_X(Z_v)|_{X_0})_{v\in V(\Gamma)},$$ where $Z_v$ is the component of $X_0$ corresponding to $v\in V(\Gamma)$. We denote by $Z^c_v$ the closure of $X_0\backslash Z_v$, and $Z^\circ_v:=Z_v\backslash Z^c_v$.
\end{nt}
\begin{defn}\label{defn:inc_lls}
Given a quadruple of nonnegative integers $(r_1,r_2,d_1,d_2)$ with $r_1>r_2$ and $d_1>d_2$, an {\it inclusion of limit linear series of type $(r_1,r_2,d_1,d_2)$ over $X_0$} consists of the data 
\[(\mL,Z,\{V_{v,2}\subset V_{v,1}\}_{v\in V(\Gamma)})\] in which
\begin{enumerate}
 \item $\mL$ is a line bundle of total degree $d_1$ over $X_0$;
 \item for all $v\in V(\Gamma)$, $V_{v,2}\subset V_{v,1}\subset \Gamma(X_0,\mL(\mtd_v))$ are subspaces of sections for which $(\mL,(V_{v,1})_{v\in V(\Gamma)})$
  is a limit $\g^{r_1}_{d_1}$ in the sense of Definition~\ref{lls-defn} and $(\mL,(V_{v,2})_{v\in V(\Gamma)})$
  is a limit $\g^{r_2}_{d_1}$; and
 \item $Z$ is a finite sub-scheme of length $d_1-d_2$ such that for every $v\in V(\Gamma)$ the evaluation map  
 \[e_{v,Z}:V_{v,2}\to \Gamma(X_0,\mL(\mtd_v)\ot\mO_Z)
 \]  
 is zero.
 \end{enumerate} 
\end{defn}

\subsection{Moduli of inclusions of limit linear series}\label{subsec:moduli limit} 
Continuing with the conventions of Notation~\ref{nt:curves},
in this subsection we construct the moduli space over $B$ of inclusion of (limit) linear series on $X$ with base points following an idea of \cite[\S 3]{osserman2014limit}. To this end, let $d_1>d_2>0$, $r_1>r_2>0$, $d=d_1-d_2$ such that $d_1-d_2>r_1-r_2$. We have the following theorem, whose proof is carried out in \S \ref{subsubsec: scheme structure of moduli limit}:

\begin{thm}\label{thm:moduli of inclusion of limit linear series}
Let $X$ and $B$ be as above. There exists a proper moduli space $\mathcal G$ over $B$. Its generic fiber $\mathcal G_{\eta}$ is isomorphic to $G^{r_1,r_2}_{d_1.d_2}(X_\eta)$ and parameterizes all inclusions of linear series of type $(r_1,r_2,d_1,d_2)$ over $X_\eta$, while its special fiber contains an open subset parameterizing all inclusions of limit linear series of type $(r_1,r_2,d_1,d_2)$ over $X_0$.
\end{thm}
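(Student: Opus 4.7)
The plan is to extend the construction of Proposition \ref{prop:presentation} to the family $X/B$, following the strategy of \cite[\S 3]{osserman2014limit} combined with the linked determinantal machinery of Section \ref{sec:linked}. First I would set up the ambient parameter space. Choose a sufficiently ample relative effective divisor $Z \subset X$ so that $R^1 \pi_* \mathcal L(Z) = 0$ for every line bundle of total degree $d_1$ and for each twist of it. For each $v \in V(\Gamma)$ fix a concentrated multidegree $\mtd_v$; by Lemma \ref{deg-sim}, over the special fiber all multidegrees of total degree $d_1$ are similar, so the universal Poincar\'e bundle $\mathcal L$ on $X \times_B \Pic^{d_1}(X/B)$ admits canonical twists $\mathcal L_v := \mathcal L(\mtd_v)$ as in Notation \ref{minimal}. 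Form the fibered product
\[
\mathcal F := X_d \times_B \prod_{v \in V(\Gamma)} \fl_{r_1+1,\, r_2+1}\!\bigl(\pi_* \mathcal L_v(Z')\bigr)
\]
over $\Pic^{d_1}(X/B)$, where $X_d$ is the relative $d$-th symmetric product (parameterizing the base-point subscheme) and $Z'$ is the pullback of $Z$. This plays the role of the ambient $X$ in Definition \ref{defn:smooth_moduli}, one vertex at a time.

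Next I would realize $\mathcal G$ as a closed subscheme of $\mathcal F$ cut out by three types of determinantal conditions, matching the three items of Definition \ref{defn:inc_lls}. The first, analogous to condition (1) of Definition \ref{defn:smooth_moduli}, forces each flag $\widetilde{\mathcal V}_{v,2} \subset \widetilde{\mathcal V}_{v,1}$ to descend to a flag of subbundles of $\pi_* \mathcal L_v$ rather than merely of the twisted pushforward. The second imposes the limit linear series compatibility from Definition \ref{lls-defn}(3) for both $j = 1, 2$: for every multidegree $\mtd_0$, the kernel of
\[
\pi_* \mathcal L(\mtd_0) \longrightarrow \bigoplus_{v \in V(\Gamma)} \pi_*\bigl(\mathcal L_v|_{C_v}\bigr) \big/ \widetilde{\mathcal V}_{v,j}
\]
must have rank at least $r_j + 1$; this is exactly a linked-rank condition of the form covered by Section \ref{sec:linked}. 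The third condition is the base point requirement: the composite $\widetilde{\mathcal V}_{v,2} \to \pi_* \mathcal L_v(Z') \to \pi_* \bigl(\mathcal L_v(Z')|_E\bigr)$ vanishes, where $E$ is the pullback of the universal divisor on $X_d$. Each of these is cut out by vanishing of sections or minors of a morphism of coherent sheaves, so $\mathcal G \subset \mathcal F$ is closed.

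Properness of $\mathcal G / B$ is then immediate: $\mathcal F$ is proper over $B$ since the relative Picard scheme is projective (as $X/B$ is projective and $X_0$ is of compact type), the symmetric product is projective, and each flag scheme is projective over its base. For the fiber identification, over the generic point $\eta \in B$ the curve $X_\eta$ is smooth, only one multidegree is relevant, the limit linear series condition collapses to the usual subbundle condition of Definition \ref{defn:subbundles}, and the whole construction reduces to that of Proposition \ref{prop:presentation}, giving $\mathcal G_\eta \cong G^{r_1, r_2}_{d_1, d_2}(X_\eta)$. Over the special fiber, a geometric point of $\mathcal G$ records exactly the data $(\mathcal L, Z, \{V_{v,2} \subset V_{v,1}\}_v)$ of Definition \ref{defn:inc_lls}; the open subset in the statement is the one on which the flag ranks are precisely $r_j + 1$, so that the flag data faithfully encodes a genuine inclusion of limit linear series rather than a rank-degenerate object on a spurious boundary component.

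The hard part will be the second step: verifying scheme-theoretically that the limit linear series compatibility condition is closed and cut out as claimed in families, through the degeneration. The difficulty is that the condition of Definition \ref{lls-defn}(3) is phrased pointwise on $X_0$ in terms of concentrated multidegrees, whereas globally one must reconcile the different multidegrees $\mtd_v$ via the linked-chain twisting maps induced by the sections $s_v$ of the enriched structure. This is precisely the setting in which Section \ref{sec:linked} applies, with $s$ taken to be a uniformizer of $B$ at the special point; establishing that the resulting linked determinantal locus agrees set-theoretically with Definition \ref{defn:inc_lls} on an open substack of the special fiber, and that it behaves well under specialization from $\eta$ to the special point, comprises the technical core of the construction.
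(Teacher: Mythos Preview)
Your outline is reasonable in spirit, but it diverges structurally from the paper's primary construction in \S\ref{subsubsec: scheme structure of moduli limit}, and two of your choices are genuinely problematic.

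\textbf{Structural comparison.} The paper does \emph{not} build $\mathcal G$ from flag bundles over $\Pic^{d_1}(X/B)$. Instead it bootstraps from Osserman's already-constructed moduli space $G^{r_1}_{w_0}(X/B)$ of limit $\mathfrak g^{r_1}_{w_0}$'s: one sets $\mathcal P=\mathrm{Hilb}_d(X/B)\times_B G^{r_1}_{w_0}(X/B)$, then forms the $\mathcal P$-fiber product $\mathcal G^1$ of Grassmann bundles $Gr(r_2+1,\tilde q^*\mathscr V^1_v)$ \emph{inside} the universal bundles $\mathscr V^1_v$ that already exist over $G^{r_1}_{w_0}(X/B)$. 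The limit-linear-series compatibility for the ambient series is therefore built in from the start; only the compatibility for the \emph{included} series (the $(r_2+1)$-st vanishing locus of the map \eqref{eqn:G2}) and the base-point condition \eqref{eqn:G} need to be imposed. Your approach, imposing both levels of compatibility by hand over a flag bundle on Picard, is much closer to the paper's \emph{alternative} construction in \S\ref{subsubsec: alternate construction}; note that the paper uses that variant only for dimension estimates over $\mathrm{Hilb}^\circ_d$, and explicitly flags in Remark~\ref{rem:scheme structure of moduli} that the two constructions may differ scheme-theoretically.

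\textbf{Two concrete issues.} First, you use the relative symmetric product $X_d$ to parameterize the base locus. Over the special fiber $X_0$ this is wrong: for a nodal curve the symmetric product and the Hilbert scheme do not agree, and it is the universal ideal sheaf on $\mathrm{Hilb}_d(X/B)\times_B X$ that is needed to make sense of the evaluation map \eqref{eqn:G} (and its flatness). Second, your identification of the open subset is off. In your own construction the flags lie in a flag bundle, so their ranks are \emph{always} exactly $r_j+1$; there is no ``rank-degenerate'' locus to excise. The open subset in the theorem is $\mathcal G^\circ_0$, the preimage of $\mathrm{Hilb}^\circ_d(X/B)$, i.e.\ the locus where the base-point subscheme avoids the nodes of $X_0$. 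This restriction is what guarantees that $\tilde p_*\mathscr L_{w_v}|_{\mathcal I}$ is a vector bundle and that the data faithfully record an inclusion of limit linear series in the sense of Definition~\ref{defn:inc_lls}.
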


\medskip
It is worth mentioning that in principle our construction of $\mathcal G$ below also works if $X_0$ is not of compact type; however, we restrict to the compact type case for simplicity's sake. In what follows, we fix a base multidegree $w_0$ on $\Gamma$ with total degree $d_1$, and let $M=M_{w_0}$ denote the set of multidegrees on $\Gamma$ which can be obtained from $w_0$ by twisting in a finite sequence of vertices. Since $X_0$ is of compact type, $M$ is just the set of all multidegrees on $\Gamma$.  Note that there is a directed graph $G$ whose vertices are identified with $M$ and edges are induced by twisting; see \cite[Notation 2.11]{osserman2014limit} for a more general construction where $X_0$ is not necessarily of compact type. 
 
 Now let $w_v\in M$ be the {\it naive} multidegree concentrated on $Z_v$, i.e. that which has degree $d_1$ on $Z_v$ and is zero elsewhere. Let $\overline M=\overline M_{(w_v)_v}$ denote the union over all adjacent pairs $(v,v^{\pr})\subset V(\Gamma)$ of vertices $w\in V(G)$ that lie between $w_v$ and $w_{v'}$ (so the divisor indexed by $w$ is effective and has degree zero on $V(\Gamma)\backslash \{v,v'\}$). See \cite[Definition 5.5]{osserman2014limit} for a construction of $\overline M_{(w_v)_v}$ for more general concentrated degrees $(w_v)_v$. We will use the terminology {\it limit $\mathfrak g^r_{w_0}$} to mean a limit linear series of rank $r$ and multidegree $w_0$ on $X_0$.

Finally, let $\mathrm{Hilb}_d(X/B)$ denote the ``relative" Hilbert scheme parameterizing closed finite degree $d$ subschemes of the fibers of $X$ over $B$.
Let $\mathrm{Hilb}^\circ_d(X/B)\subset \mathrm{Hilb}_d(X/B)$ be the open subscheme parameterizing subschemes of $X$ supported away from the nodes of $X_0$. Given $w\in M$, let $\mathrm{Pic}^w(X/B)$ denote the moduli schemes of line bundles of degree $d_1$ that have multidegree $w$ on $X_0$. 

\subsubsection{The schematic structure of the moduli space}\label{subsubsec: scheme structure of moduli limit} The moduli space of limit linear series constructed in \cite{osserman2014limit} is isomorphic as a scheme to the Eisenbud--Harris limit linear series moduli space for compact-type curves over the locus of refined series. As we build on Osserman's construction, we will freely use the notions of ``subbundles of pushforwards" and ``generalized determinantal loci" introduced in \cite[Appendix B]{osserman2014higherlimit}. 

\medskip
To begin, let 
$G^{r_1}_{w_0}(X/B)$ (resp., $G^{r_1}_{w_0}(X_0)$) denote the moduli space of (limit) linear series on $X$ with multidegree $w_0$ on $X_0$ (resp., the moduli space of limit $\mathfrak g^{r_1}_{w_0}$s on $X_0$) with respect to the fixed enriched structure $(\mathscr O_v)_v$ as in Notation \ref{nt:curves}. These spaces are introduced in \cite[Definition 3.7, Notation 3.13]{osserman2014limit} as closed subschemes of the $\mathrm{Pic}^{w_0}(X/B)$-fiber product of $G^{r_1}_{w_v}(X)$ for all $v$. Here $G^r_w(X)$ is the moduli scheme of pairs $(L,V)$, where $L$ is in $\mathrm{Pic}^w(X/B)$ and $V$ is an $(r+1)$-dimensional space of global sections of $L$. So let $\mathcal P=\mathrm{Hilb}_d(X/B)\times_B G^{r_1}_{w_0}(X)$.
We have the following diagram:
\begin{equation}\label{basic_diagram}
\begin{tikzcd}
&\mathcal P\times_B X\rar\dar&G^{r_1}_{w_0}(X/B)\times_B X\dar{\tilde p}\\\mathcal P\times_B X\rar\dar
&\mathcal P\rar{\tilde q}\dar{q} & G^{r_1}_{w_0}(X/B)&\\
\mathrm{Hilb}_d(X/B)\times_B X\rar{p}& \mathrm{Hilb}_d(X/B). &&
\end{tikzcd}
\end{equation}

For notational simplicity, we abusively omit writing pullbacks whenever doing so is unambiguous. 
Our construction of course depends on various naturally-arising bundles lying over the schemes in the basic diagram \eqref{basic_diagram}. Accordingly, let $\mathscr L$ denote the universal (Poincar\'e) line bundle over $G^{r_1}_{w_0}(X/B)\times_B X$, and let $\mathscr V^1_v\subset \tilde p_*\mathscr L_{w_v}$ denote the universal subbundle, where $\mathscr L_{w_v}$ is the universal bundle with multidegree $w_v$ on $X_0$ obtained from $\mathscr L$ by twisting with line bundles in the enriched structure. 

{\flushleft Let}
\[
\psi_v\colon G_v:=Gr(r_2,\tilde q^*\mathscr V^1_v)\rightarrow \mathcal P
\]
denote the relative Grassmann bundle of $r_2$-dimensional subspaces in the fibers of $\tilde q^*\mathscr V^1_v$, with universal subbundle $\mathscr V^2_v\subset  \psi_v^*\tilde q^*\mathscr V^1_v$. Let $\mathcal G^1$ be the fiber product of all $G_v$ over $\mathcal P$ with projection map $\widetilde\psi_v\colon \mathcal G^1\rightarrow G_v$, and let $\psi:= \psi_v\circ\widetilde \psi_v$.
 
 \medskip
 It is easy to check that $\mathcal G^1_\eta$ parameterizes \textit{generic tuples} $( L(-E)\hookrightarrow  L,(W^2_v)_{v\in V(\Gamma)},W_1)$, in which $E\in\mathrm{Hilb}_d(X_\eta)$, $( L,W_1)\in G^{d_1}_{r_1}(X_\eta)$ and $W^2_v\in Gr(r_2,W_1)$. Similarly $\mathcal G^1_0$ parameterizes \textit{special tuples}  
$( L^{\pr}\otimes  I_{E^{\pr}}\hookrightarrow L^{\pr},(V^2_v,V^1_v)_{v\in V(\Gamma)})$, in which $( L^{\pr},(V^1_v)_v)\in G^{r_1}_{w_0}(X_0)$ is a limit $\mathfrak g^{r_1}_{w_0}$ on $X_0$, $ I_{E^{\pr}}$ is the ideal sheaf of $E^{\pr}\in\mathrm{Hilb}_d(X_0)$ and $V^2_v\subset Gr(r_2,V^1_v)$. 

\medskip
Let $\mathcal G^2\subset\mathcal G^1$ denote the intersection of the (generalized) $(r_2+1)$-st vanishing loci of the maps
\begin{equation}\label{eqn:G2}\tilde p_* \psi^*\tilde q^*\mathscr L_w\rightarrow \displaystyle\bigoplus_{v\in V(\Gamma)}\tilde p_* \psi^*\tilde q^*\mathscr L_{w_v}/\widetilde \psi^*_v\mathscr V^2_v\end{equation}
over all $w\in M$. Then $\mathcal G^2_\eta$ corresponds to generic tuples with $W^2_v$ independent of $v$, and $\mathcal G^2_0$ parameterizes special tuples satisfying $( L^{\pr},(V^2_v)_v)\in G^{r_2}_{w_0}(X_0)$. 

\medskip
Let $\mathcal I$ be the universal ideal sheaf over $ \mathrm{Hilb}_d(X/B)\times_B X$. Then $\psi^*q^*\mathcal I$, as a coherent sheaf on $\mathcal G^1\times_B X$, is flat over $\mathcal G^1$. Note that $\widetilde \psi^*_v\mathscr V^2_v$ is a subbundle of $\tilde p_*\psi^*\tilde q^*\mathscr L_{w_v}$, and we have an exact sequence
$$0\rightarrow\psi^*\tilde q^*\mathscr L_{w_v}\otimes\psi^*q^*\mathcal I\rightarrow  \psi^*\tilde q^*\mathscr L_{w_v}\rightarrow \mathscr L_{w_v}|_\mathcal I:=\psi^*\tilde q^*\mathscr L_{w_v}/(\psi^*\tilde q^*\mathscr L_{w_v}\otimes\psi^*q^*\mathcal I)\rightarrow 0$$ on $\mathcal G^1\times_B X$ that is preserved under the base change $T\rightarrow \mathcal G^1$. Furthermore, the pushforward $\tilde p_*\mathscr L_{w_v}|_\mathcal I$ is a vector bundle of rank $d$ over (the preimage in $\mathcal G^1$ of) $\mathrm{Hilb}^\circ_d(X/B)$ by \cite[\S 0.5]{mumford1994geometric}. Our moduli space over $B$ will be the locus $\mathcal G$ in $\mathcal G^2$ along which the composition 
\begin{equation}\label{eqn:G}\widetilde \psi^*_v\mathscr V^2_v\hookrightarrow\tilde p_* \psi^*\tilde q^*\mathscr L_{w_v}\rightarrow \tilde p_*\mathscr L_{w_v}|_\mathcal I\end{equation} vanishes identically.



\medskip
By construction, a $T$-valued point of $\mathcal G^1$ is a tuple $(L_T, E_T,( V^2_v)_v,( V^1_v)_v))$ where 
\begin{itemize}
\item $L_T$ is a line bundle over $T\times_BX$ with multidegree $w_0$ on $T\times_B X_0$; 
\item $ E_T$ is a closed subscheme of $T\times_BX$, flat and with Hilbert polynomial $d$ over $T$;
\item $ V^1_v$ is a rank-$(r_1+1)$ subbundle of $\tilde p_* L_{T,w_v}$ for which the map
$$\tilde p_* L_{T,w}\rightarrow \displaystyle\bigoplus_{v\in V(\Gamma)}\tilde p_*  L_{T,w_v}/ V^1_v$$ has $(r_1+1)$-st vanishing locus equal to $T$ for all $w\in M$; and 
\item $ V^2_v$ is a rank-$(r_2+1)$ subbundle of $ V^1_v$. 
\end{itemize}
Here $ L_{T,w_v}$ (and similarly for $ L_{T,w}$) is defined in the same manner as $\mathscr L_{w_v}$. Such a tuple is a $T$-valued point of $\mathcal G$ whenever the map
$$\tilde p_* L_{T,w}\rightarrow \displaystyle\bigoplus_{v\in V(\Gamma)}\tilde p_*  L_{T,w_v}/ V^2_v$$ has $(r_2+1)$-st vanishing locus equal to $T$ for all $w\in M$, and $$ V^2_v\rightarrow \tilde p_*( L_{T,w_v}/ L_{T,w_v}\otimes  I_{ E}))$$ vanishes identically on $T$ for all $v\in V(\Gamma)$, where $ I_{ E}$ is the ideal sheaf of $ E$.
\begin{rem}\label{rem: recording base points}
For technical convenience, our moduli functor keeps track of the base-locus of the sub-series as well as the sub-series itself. One can certainly recover the moduli problem of (limit) linear series admitting sub-series with prescribed amount of base points by applying the obvious forgetful functor.
\end{rem}

Finally, let $\mathcal G^\circ$ denote the preimage of $\mathrm{Hilb}^\circ_d(X/B)$ in $\mathcal G$. Then $\mathcal G^\circ_0\subset \mathcal G_0$ is the open subset described in Theorem \ref{thm:moduli of inclusion of limit linear series}.


\subsubsection{An alternative construction.}\label{subsubsec: alternate construction}
In this subsection we give another construction of a moduli space over $B$ of inclusion of (limit) linear series, which at least set-theoretically agrees with $\mathcal G^\circ$. We construct the alternative space as an intersection of (linked) determinantal loci of vector bundles over a regular ambient space. The alternative construction makes estimating the dimension of $\mathcal G^\circ$ more transparent and is also helpful in proving the smoothing of certain well-behaved inclusions of limit linear series; see \S \ref{subsec:smoothing}. 

\medskip
To begin, let $\mathscr L_w$ denote the universal bundle over $\mathrm{Pic}^w(X/B)\times_B X$.
  Fix a choice of effective divisor $D=\sum_{v\in V(\Gamma)}D_v$ on $X$ of relative degree $\tilde d=\sum \deg D_v$ in which $D_v$ is a union of sections of $X/B$ passing through $Z^\circ_v$ and $d_v=\deg D_v$ is sufficiently large relative to $d$ and the genus $g_v$ of $Z_v$. 
  In fact, we will see that it suffices to choose $D_v$ with $d_v- d>2g_v-2$. 

 Now set $$\widehat{\mathcal P}:= \mathrm{Hilb}^\circ_d(X/B)\times_B\prod_{v\in V(\Gamma)} \mathrm{Pic}^{w_v}(X/B)$$ where the product of Picard varieties is over $\mathrm{Pic}^{w_0}(X/B)$.
The following commutative diagram will be useful in tracking (the various projections involved in) our construction.
$$
\begin{tikzcd}
&\widehat{\mathcal P}\times_B X\rar\dar&\mathrm{Pic}^{w_v}(X/B)\times_B X\dar{p_v}\rar{\pi_v}&X\\\widehat{\mathcal P}\times_B X\rar\dar
&\widehat{\mathcal P}\rar{q_v}\dar{\hat q} & \mathrm{Pic}^{w_v}(X/B)&\\
\mathrm{Hilb}^\circ_d(X/B)\times_B X\rar{p}& \mathrm{Hilb}^\circ_d(X/B). &&
\end{tikzcd}
$$
 
 Let $\mathcal L_v:=q_v^*(\mathscr L_{w_v}\otimes\pi_v^*\mathcal O_X(D))$ and $\mathcal E_v=p_{v*}\mathcal L_v$. Similarly, let $\mathcal L_v^{\pr}:=q_v^*(\mathscr L_{w_v}\otimes \pi_{v}^*\mathcal O_X(D)|_{D_v})$ and $\mathcal E^{\pr}_v=:p_{v*}\mathcal L_v'$ over $\widehat{\mathcal P}$. By virtue of our choice of $d_v$, $\mathcal E_v$ (resp., $\mathcal E^{\pr}_v$) is a rank-$(d_1+\tilde d-g+1)$ (resp., rank-$ d_v$) vector bundle. 
 
 Let $F_v:=  \mathrm{Flag}(r_2+1,r_1+1;\mathcal L_v)$ denote the relative flag variety over $\widehat{\mathcal P}$, and let $\mathcal I^\circ$ denote the universal ideal sheaf on $\mathrm{Hilb}^\circ_d(X/B)\times_BX$. Let $\widetilde {\mathcal E}_v:=p_{v*}(\mathcal L_v\otimes \hat q^*\mathcal I^\circ)$ and $\widetilde {\mathcal E}^{\pr}_v=p_{v*}(\mathcal L_v^{\pr}\otimes \hat q^*\mathcal I^\circ)$; note that $p=p_v$ on $\widehat{\mathcal P}\times_B X$ for each $v$. We have $\widetilde{\mathcal E}_v\hookrightarrow \mathcal E_v$ and it follows from our choice of $d_v$ and \cite[\S 0.5]{mumford1994geometric} that $\widetilde{\mathcal E}_v$ (resp., $
\widetilde{\mathcal E}_v^{\pr}$) is a vector bundle
of rank $d_2+\tilde d-g+1$ (resp., $
\tilde d_v$).

\medskip
\textbf{The total space.} The generic fiber of the $\widehat{\mathcal P}$-fiber product $$\mathcal G^1_D:=\prod_{v\in V(\Gamma)}F_v$$ over $B$ parameterizes \textit{generic $D$-tuples} $( L(-E)\hookrightarrow \mathscr L,(W^2_v,W^1_v)_{v\in V(\Gamma)})$ in which $E\in\mathrm{Hilb}_d(X_\eta)$, $ L\in\mathrm{Pic}^{d_1}(X_\eta)$ and $$(W^2_v,W^1_v)\in \mathrm{Flag}(r_2+2,r_1+1; H^0(X_\eta, L(D_\eta))).$$ Similarly the special fiber of $\mathcal G^1_D$ parameterizes \textit{special $D$-tuples}  
$( L^{\pr}(-E^{\pr})\hookrightarrow L^{\pr},(V^2_v,V^1_v)_{v\in V(\Gamma)})$, in which $ L^{\pr}\in\mathrm{Pic}^{w_0}(X_0)$, $E^{\pr}\in\mathrm{Hilb}^\circ_d(X_0)$ and $$(V^2_v,V^1_v)\in \mathrm{Flag}(r_2+1,r_1+1; H^0(X_0, L^{\pr}_{w_v}(D_0))).$$
Here $ L^{\pr}_{w_v}$ is the line bundle with multidegree $w_v$ obtained from  $ L^{\pr}$ by twisting. 

\begin{rem}\label{rem: total space}
We will realize the alternative moduli space as a closed subscheme of $\mathcal G^1_D$, rather than $\mathcal G^1$. We do so for two reasons. First, it enables us to describe the moduli space as a determinantal locus associated with several linked chains of flags (see the construction below), which in turn enables us to leverage the dimension estimate of Section~\ref{sec:linked} to obtain a lower bound for its dimension. On the other hand, to prove the universal openness of the moduli space in Theorem~\ref{thm:smoothing theorem} we need an ambient space that is smooth 
over $B$; see also \cite[Corollary 5.1]{osserman2015relative}.
\end{rem}

{\flushleft Now} consider the projection 
$$\mathcal G^1_D\xrightarrow{\widetilde \phi_v}F_v\xrightarrow{\phi_v}\widehat{\mathcal P}.$$
Let $\phi:= \phi_v\circ\widetilde \phi_v$, and let $\mathscr V^2_v\hookrightarrow \mathscr V^1_v\hookrightarrow\phi_v^*\mathcal E_v$ denote the universal flag over $F_v$ whose pullback to $\mathcal G^1_D$ we denote by $\mathcal V^2_v\hookrightarrow \mathcal V^1_v\hookrightarrow\phi^*\mathcal E_v$. 

\medskip
\textbf{Conditions imposed by $W^1_v\subset H^0(X_\eta, L(D_\eta-(D_v)_\eta))$ and $V_v^1\subset H^0(X_0,  L^{\pr}_{w_v}(D_0-(D_v)_0)$).}
 The locus $\mathcal G^2_D\subset \mathcal G^1_D$ of points over which the map $\mathcal V^1_v\rightarrow \phi^*\mathcal E'_v$ vanishes identically for each $v\in V(\Gamma)$ has generic fiber parameterizing generic $D$-tuples such that $W^1_v\subset H^0(X_\eta, L(D_\eta-(D_v)_\eta))$ and special fiber parameterizing special $D$-tuples such that $V_v^1\subset H^0(X_0,  L_{w_v}(D_0-(D_v)_0))$ for each $v\in V(\Gamma)$.

\medskip
\textbf{Conditions imposed by $W^2_v\subset H^0(X_\eta,  L(-E))$ and $V^2_v\subset H^0(X_0, L'_{w_v}(-E^{\pr}))$.}
On $\widehat{\mathcal P}\times_B X$, we have an exact sequence
$$0\rightarrow \hat q^*\mathcal I^\circ\rightarrow \mathcal O(D)\rightarrow \mathcal O(D)/\hat q^*\mathcal I^\circ\rightarrow 0$$
where $\mathcal O(D)=q_v^* \pi_v^*\mathcal O_X(D)$. 
Tensoring with $q_v^*\mathscr L_{w_v}$ we get
$$0\rightarrow  q_v^*\mathscr L_{w_v}\otimes \hat q^*\mathcal I^\circ\rightarrow \mathcal L_v= q_v^*\mathscr L_{w_v}\otimes \mathcal O(D)\rightarrow   q_v^*\mathscr L_{w_v}\otimes(\mathcal O(D)/\hat q^*\mathcal I^\circ)\rightarrow 0.$$

Note that $\mathcal O(D)/\hat q^*\mathcal I^\circ$ is flat over $\widehat{\mathcal P}$, as 
$$0\rightarrow \mathcal O_{\widehat{\mathcal P}\times_B X}/\hat q^*\mathcal I^\circ\rightarrow \mathcal O(D)/\hat q^*\mathcal I^\circ\rightarrow \mathcal O(D)/\mathcal O_{\widehat{\mathcal P}\times_B X}\rightarrow 0$$
is exact. It follows that $p_{v^*}( q_v^*\mathscr L_{w_v}\otimes(\mathcal O(D)/\hat q^*\mathcal I^\circ))$ is a vector bundle on $\widehat{\mathcal P}$ of rank $\tilde d+d$. Let $\mathcal G^3_D\subset \mathcal G^2_D$ denote the locus over which the composition
$$\mathcal V^2_v\rightarrow\phi^* \mathcal E_v=\phi^*p_{v*}( q_v^*\mathscr L_{w_v}\otimes \mathcal O(D))\rightarrow \phi^*p_{v*}( q_v^*\mathscr L_{w_v}\otimes(\mathcal O(D)/q^*\mathcal I^\circ))$$ vanishes identically.

Similarly, by applying \cite[\S 0.5]{mumford1994geometric} 
we see that $p_{v*}(\mathcal L_v/\mathcal L_v\otimes \hat q^*\mathcal I^\circ)$ is an vector bundle 
over $\widehat{\mathcal P}$ of rank $d$ whose fiber at a $B_\eta$-point (resp., $B_0$-point) is $H^0(X_\eta,  L(D_\eta))/H^0(X_\eta,  L(D_\eta-E))$   
(resp., $H^0(X_0, L'_{w_v}(D_0))/H^0(X_0, L'_{w_v}(D_0-E'))$) for some $ L$ and $E$ (resp., $ L^{\pr}$ and $E^{\pr}$).
Let $\widetilde{\mathcal G}^3_D\subset \mathcal G^2_D$ denote the locus along which the map $\mathcal V^2_v\rightarrow  \phi^*p_{v*}(\mathcal L_v/\mathcal L_v\otimes \hat q^*\mathcal I^\circ)$ vanishes identically. Then $\widetilde{\mathcal G}^3_D$ parameterizes generic $D$-tuples such that $W^2_v\subset H^0(X_\eta,  L(D_\eta-E))$ and special $D$-tuples such that $V^2_v\subset H^0(X_0, L_{w_v}^{\pr}(D_0-E^{\pr}))$. The space $\widetilde{\mathcal G}^3_D$ will rejoin our discussion at the end of this subsection.

\medskip
\textbf{Conditions imposed by the linked chain of flags.}
By construction, $\mathcal V^2_v$ is a subbundle of $\phi^*\widetilde{\mathcal E}_v=p_{v*}\phi^*(\mathcal L_v\otimes \hat q^*\mathcal I^\circ)$ of rank $r_2+1$ over $\mathcal G^3_D$. 
 Hence $\phi^*\widetilde{\mathcal E}_v/\mathcal V^2_v$ is a vector bundle of rank $d_2+\tilde d-g-r_2$ over $\mathcal G^3_D$.
Note also that for each $w\in M$ we have a commutative diagram
$$
\begin{tikzcd}
\widehat{\mathcal P}\times_B X\rar\dar&\mathrm{Pic}^{w_v}(X/B)\times_B X\dar{p_v}\rar&\mathrm{Pic}^{w}(X/B)\times_B X\dar\rar{\pi_w} &X\\
\widehat{\mathcal P}\rar{q_v} & \mathrm{Pic}^{w_v}(X/B)\rar{q^v_w}&\mathrm{Pic}^{w}(X/B)&
\end{tikzcd}
$$
in which $q^v_w$ is defined by twisting. Let $\mathcal G^4_D\subset \mathcal G^3_D$ denote the locus along which the morphism 
\begin{equation}\label{G4D}
\phi^*p_{v^*}q_v^*q_w^{v^*}(\mathscr L_w\otimes \pi_w^*\mathcal O_X(D))\rightarrow \phi^*\mathcal E_v/\mathcal V^1_v\oplus \phi^*\mathcal E_{v^{\pr}}/\mathcal V^1_{v^{\pr}}
\end{equation}
induced by twisting with the enriched structure (see also \cite[Remark 2.22]{osserman2014limit})
has rank at most $d_1+\tilde d-g-r_1$ for every $w\in\overline M$ between $w_v$ and $w_{v^{\pr}}$, where $(v,v^{\pr})$ varies over all pairs of adjacent vertices in $\Gamma$. This determinantal condition was used by Osserman to construct the moduli space of limit linear series \cite[\S 3]{osserman2014limit}. Note also that the domain vector bundle in \eqref{G4D}
is independent of $v$, as $q^v_w$ factors through
$$\mathrm{Pic}^{w_v}(X/B)\rightarrow \mathrm{Pic}^{w_0}(X/B)\rightarrow \mathrm{Pic}^{w}(X/B).$$

Finally, let $\mathcal G_D\subset \mathcal G^4_D$ denote the locus along which the morphism 
\begin{equation}\label{eqn:G2D}\phi^*p_{v^*}(q_v^*q_w^{v^*}(\mathscr L_w\otimes \pi_w^*\mathcal O_X(D))\otimes q^*\mathcal I^\circ)\rightarrow \phi^*\widetilde{\mathcal E}_v/\mathcal V^2_v\oplus\phi^* \widetilde{\mathcal E}_{v^{\pr}}/\mathcal V^2_{v^{\pr}}\end{equation}
has rank at most $d_2+\tilde d-g-r_2$ for any $w\in \overline M$ between $w_v$ and $w_{v^{\pr}}$, where $v$ and $v^{\pr}$ are as above. Arguing as in \cite[Theorem 6.1]{osserman2014limit}, we see that the generic tuples (resp., special tuples) in $\mathcal G^4_D$ are such that $W^1_v\subset H^0(X_\eta, L)$ and $W^1_v$ is independent of $v$ (resp., $V^1_v\subset H^0(X_0, L^{\pr}_{w_v})$ and $( L^{\pr},(V^1_v)_v)$ is a limit $\mathfrak g^{r_1}_{w_0}$ on $X_0$ of multidegree $w_0$).  In addition, the generic tuples (resp., special tuples) in $\mathcal G_D$ are such that $W^2_v\subset H^0(X_\eta, L(-E))$ and $W^2_v$ is independent of $v$ (resp. (i) $V^2_v\subset H^0(X_0, L^{\pr}_{w_v}(-E^{\pr}))$ and (ii) $( L^{\pr}(-E^{\pr}),(V^2_v)_v)$ is a limit linear series on $X_0$, where, given (i), (ii) is equivalent to that $( L^{\pr},(V^2_v)_v)$ is a limit $\mathfrak g^{r_2}_{w_0}$ on $X_0$). Summing up, we have proved the following proposition:



\begin{prop}\label{prop:two constructions compatible}
 The constructions of Subsections~\ref{subsubsec: scheme structure of moduli limit} and \ref{subsubsec: alternate construction} agree set-theoretically, i.e. we have $|\mathcal G^\circ|=|\mathcal G_D|$.
\end{prop}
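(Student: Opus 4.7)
The plan is to compare the geometric points of $\mathcal{G}^\circ$ and $\mathcal{G}_D$ directly, by unpacking the defining determinantal conditions assembled in Subsections~\ref{subsubsec: scheme structure of moduli limit} and \ref{subsubsec: alternate construction}. Because both moduli schemes live over $B$ and the respective geometric fibers have already been given parallel descriptions during the construction, the bulk of the work has essentially been done; what remains is to match these descriptions set-theoretically at each geometric point $b\in B$.

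First I would handle the generic fiber. On the smooth fiber $X_\eta$, multiplication by the tautological section $s_D$ of $\mathcal{O}_{X_\eta}(D_\eta)$ identifies $H^0(X_\eta,L)$ with the subspace of $H^0(X_\eta,L(D_\eta))$ annihilated by restriction to each $(D_v)_\eta$. Since we chose each $d_v$ with $d_v-d>2g_v-2$, the relevant higher cohomology groups vanish, and the condition cutting $\mathcal{G}^2_D$ from $\mathcal{G}^1_D$ captures precisely the annihilation above. Thus generic $D$-tuples in $\mathcal{G}_{D,\eta}$ correspond naturally with generic tuples in $\mathcal{G}^\circ_\eta$. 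The further vanishing conditions defining $\widetilde{\mathcal{G}}^3_D$ and $\mathcal{G}^3_D$ then recover the requirement that $V^2_v\subseteq H^0(X_\eta, L(-E))$, matching the condition \eqref{eqn:G} that cuts out $\mathcal{G}$ from $\mathcal{G}^2$ on the generic fiber.

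Next I would address the special fiber, which carries most of the content. Here the matching is governed by Osserman's framework. The rank condition \eqref{G4D} cutting out $\mathcal{G}^4_D$ is exactly Osserman's determinantal description (see \cite[\S 3]{osserman2014limit}) of when $(\mathscr{L}',(V^1_v)_v)$ defines a limit $\mathfrak{g}^{r_1}_{w_0}$, translated through the twist by $\mathcal{O}_X(D)$; because multiplication by $s_D$ commutes with the twists by the enriched structure $(\mathcal{O}_v)_v$, this condition is equivalent to the one inherited from $G^{r_1}_{w_0}(X/B)$ in the construction of $\mathcal{G}^1$. The rank condition \eqref{eqn:G2D} defining $\mathcal{G}_D$ likewise encodes that $(\mathscr{L}',(V^2_v)_v)$ is a limit $\mathfrak{g}^{r_2}_{w_0}$, matching \eqref{eqn:G2}. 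Finally, the base-point condition $V^2_v\subseteq H^0(X_0,\mathscr{L}'_{w_v}\otimes I_{E'})$ is imposed by \eqref{eqn:G} in $\mathcal{G}$ and by the vanishing of $\mathcal{V}^2_v\to\phi^*p_{v*}(\mathcal{L}_v/\mathcal{L}_v\otimes\hat q^*\mathcal{I}^\circ)$ (which cuts out $\widetilde{\mathcal{G}}^3_D$) in $\mathcal{G}_D$, yielding identical sets of tuples.

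The main obstacle is the careful bookkeeping required to verify that the rank bounds in these two families of determinantal conditions truly correspond. In the first construction the ambient pushforwards are $\tilde p_*\mathscr{L}_{w_v}$, whereas in the second they are the inflated $p_{v*}(\mathscr{L}_{w_v}\otimes\pi_v^*\mathcal{O}_X(D))$. The choice $d_v-d>2g_v-2$ guarantees vanishing of the relevant $H^1$'s on every geometric fiber, so the ranks of the condition maps shift by uniform quantities which precisely cancel in the rank inequalities imposed in defining $\mathcal{G}^4_D$ and $\mathcal{G}_D$. Once this cancellation is verified at every multidegree $w\in\overline{M}$ between the two concentrated degrees, the bijection between $k(b)$-valued points is established for both $b=\eta$ and $b=0$, yielding the set-theoretic equality $|\mathcal{G}^\circ|=|\mathcal{G}_D|$.
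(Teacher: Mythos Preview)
Your proposal is correct and follows essentially the same approach as the paper, which in fact has no separate proof: the proposition is stated as a summary of the construction, with the key sentence ``Arguing as in \cite[Theorem 6.1]{osserman2014limit}'' doing the work of identifying the points of $\mathcal G_D$ with inclusions of (limit) linear series. One point worth sharpening: the only step that is more than bookkeeping is the passage from the rank conditions over all $w\in M$ (as in \eqref{eqn:G2}, defining $\mathcal G^2$) to those over $w\in\overline M$ only (as in \eqref{eqn:G2D}, defining $\mathcal G_D$); the paper leans on Osserman's Theorem 6.1 for this set-theoretic equivalence, and you should cite that result rather than \S 3 alone, since it is precisely where the sufficiency of $\overline M$ is established. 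Also, be careful not to conflate $\mathcal G^3_D$ and $\widetilde{\mathcal G}^3_D$: the base-point condition entering $\mathcal G_D$ comes through $\mathcal G^3_D$ (vanishing along $D\cup E$), not $\widetilde{\mathcal G}^3_D$; this does not affect the set-theoretic conclusion but the write-up should keep them straight.
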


\begin{rem}\label{rem:scheme structure of moduli}
It is unclear whether $\mathcal G^\circ $ and $\mathcal G_D$ are isomorphic as {\it schemes}. The main subtlety is that the determinantal condition in \eqref{eqn:G2} for $\mathcal G$ is imposed for all $w\in M$, whereas in \eqref{eqn:G2D} it is only imposed for $w\in \overline M$. This is already significant at the level of the moduli space of limit linear series; see, for example, the proof of \cite[Proposition 3.2.7]{Lieblich2018}.
\end{rem}

\begin{rem}\label{rem:G3D}
Let $\widetilde{\mathcal G}_D$ denote the subscheme of $\widetilde{\mathcal G}^3_D$ cut out by the same determinantal condition as $\mathcal G_D$ in $\mathcal G^3_D$ (here note that $\mathcal V^2_v$ is also a subbundle of $\phi^*(\widetilde{\mathcal E}_v)$ over $\widetilde{\mathcal G}^3_D$). Let $$\mathrm{Hilb}^{\square}_d (X/B)\subset\mathrm{Hilb}^\circ_d (X/B) $$ denote the open subscheme parameterizing subschemes of $X$ that avoid both $D_0$ and the nodes of $X_0$. Then the preimage of $\mathrm{Hilb}^\square_d(X/B)$ in $\mathcal G_D$ agrees set-theoretically with its preimage in  $\widetilde{\mathcal G}_D$, both of which parameterize inclusions of linear series (resp., limit linear series) with base points disjoint with $D_\eta$ (resp., $D_0$). Indeed, for a generic $D$-tuple and $E\in \mathrm{Hilb}^\square_d (X/B)$, if $W^2_v\subset H^0(X_\eta, L(D_\eta-E_\eta))$ and $W^1_v\subset H^0(X_\eta,  L)$, then $W^2_v\subset H^0(X_\eta, L(-E_\eta))$, and similarly for the special $D$-tuples.

\end{rem}


\subsection{A smoothing theorem}\label{subsec:smoothing}
In this subsection we assume that 
$X_0$ is a (Brill--Noether) general chain of $g$ smooth elliptic curves. We also assume that $\rho(g,r_1,d_1)=\mu(d_1-d_2,r_2,r_1)=0$. The facts that $\rho=0$ and that $X_0$ is general imply that the Brill--Noether variety of limit linear series $G^{r_1}_{d_1}(X_0)$ is a reduced zero-dimensional scheme, whose cardinality is given by the number of $(g-d_1+r_1) \times (r_1+1)$-dimensional Young tableaux with entries in $[g]=\{1,\dots,g\}$; see, e.g., \cite{LT}. An equivalent indexing scheme for the elements of $G^{r_1}_{d_1}(X_0)$, more efficient for our purposes, is by length-$g$ strings of numbers $i_1 i_2 \dots i_g$, in which each index $i_j$ belongs to $[r_1+1]$. (Indeed, given a tableau $(T_{ij})$, the $T_{ij}$th letter of the corresponding word is $j$; conversely, given a word $w$, we set $T_{ij}$ equal to the $i$th instance of the number $j$ in $w$.) Hereafter we refer to the string that indexes a limit linear series as its {\it word type}. 

\begin{thm}\label{thm:smoothing theorem}
The moduli scheme $\mathcal G^\circ$ is universally open and flat and reduced at every point $x \in \mathcal G^\circ_0$ corresponding to an inclusion of limit linear series on $X_0$ for which the ambient series is of word type $$\ubr{\big(12 \cdots (r_1+1)\big)}_{g/(r_1+1)\text{ times}}.$$
In particular, every such $x$ lifts to a unique inclusion of linear series on $X_\eta$.
\end{thm}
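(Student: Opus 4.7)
The plan is to pass to the alternative construction $\mathcal G_D$ of Subsection~\ref{subsubsec: alternate construction}, since it sits inside the smooth-over-$B$ ambient space $\mathcal G^1_D$ and its defining equations are of the type controlled by the linked determinantal machinery of Section~\ref{sec:linked}. Proposition~\ref{prop:two constructions compatible} identifies $\mathcal G_D$ with $\mathcal G^\circ$ set-theoretically, so I would first establish flatness, universal openness and reducedness for $\mathcal G_D$ at (the image of) $x$, and then transfer these properties to $\mathcal G^\circ$ by comparison of the two functors over the common open locus $\mathrm{Hilb}^\square_d(X/B)$. Since $B$ is a trait and $\mathcal G^\circ\to B$ is proper by Theorem~\ref{thm:moduli of inclusion of limit linear series}, flatness at $x$ combined with zero-dimensionality of both fibres will automatically supply the unique lift of $x$ to an $\eta$-point.

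The core computation is a lower bound on $\dim_x\mathcal G_D$. The ambient space $\mathcal G^1_D$ is smooth over $B$ of computable relative dimension $N$ (a sum of the relative dimensions of $\mathrm{Hilb}^\circ_d(X/B)$, the Picard factors and the flag varieties $F_v$), and $\mathcal G_D$ is cut out inside it by (i) the $D$-vanishing loci forcing $W^1_v\subset H^0(X,L(D-D_v))$ and $W^2_v\subset H^0(X,L(D-E))$, each of the expected codimension since the targets are honest vector bundles; and (ii) the linked determinantal conditions \eqref{G4D} and \eqref{eqn:G2D}. As the twisting multidegree $w$ varies along $\overline M$ between adjacent concentrated multidegrees $w_v$ and $w_{v^{\pr}}$, the bundles appearing in these conditions, together with the two-step flag $\mathcal V^2_v\subset\mathcal V^1_v\subset\phi^*\mathcal E_v$, organize into an $s$-linked chain of flags with $s$ a uniformiser of $B$. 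Applying Theorem~\ref{thm:dimension of linked determinantal loci} at an $\underline r$-strict point then yields a codimension bound whose direct evaluation gives
\[
\dim_x\mathcal G_D\;\geq\;\rho(g,r_1,d_1)+\mu(d_1-d_2,r_2,r_1)+1\;=\;1.
\]

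The main obstacle is verifying $\underline r$-strictness of this linked chain at $x$; this is precisely where the periodic word-type hypothesis enters decisively. For a Brill--Noether general chain of elliptic curves with $\rho=0$, the limit $\g^{r_1}_{d_1}$ indexed by $(1\,2\cdots(r_1+1))^{g/(r_1+1)}$ has, on each elliptic component $Z_v$, the maximally spread vanishing sequence $(0,1,\dots,r_1)$ at each of its two nodes, and the $v$-aspect $(L^{\pr}_v,V^1_v)$ is the essentially unique linear series with that ramification data. Using explicit bases of the fibres $(\mathscr E^j_i)_x$ indexed by sections with prescribed vanishing orders, I would verify condition~(a) of Definition~\ref{defn:strict linked chain of flags} directly from the rigidity of this flag structure, and verify condition~(b) by carefully tracking, for each interior multidegree $w$ between $w_v$ and $w_{v^{\pr}}$, which sections of the included $\g^{r_2}_{d_2}$ are forced by the base divisor $E$ (supported away from the nodes by restriction to $\mathrm{Hilb}^\square_d$) to vanish identically on the various elliptic components of $X_0$. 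The periodicity of the word type is exactly what forces these kernel-dimension counts to saturate the strictness inequality; weaker word types lead to non-$\underline r$-strict points of the sort exhibited in Non-example~\ref{ex:non-r-strict}.

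Granting $\underline r$-strictness, the dimension estimate combines with the fact that the generic fibre $\mathcal G_{D,\eta}\cong G^{r_1,r_2}_{d_1,d_2}(X_\eta)$ is zero-dimensional (Proposition~\ref{prop:presentation}, since $\rho+\mu=0$ on the Brill--Noether general smooth curve $X_\eta$) to show that $\mathcal G_D$ is pure of relative dimension $0$ at $x$. The dimension criterion for flatness over a DVR then yields flatness of $\mathcal G_D\to B$ at $x$, and flatness together with local finite presentation gives universal openness. Reducedness at $x$ follows because $\mathcal G^1_D$ is regular and $\mathcal G_D$ meets it in exactly the codimension predicted by Theorem~\ref{thm:dimension of linked determinantal loci}, so $\mathcal G_D$ is Cohen--Macaulay at $x$; combined with the (obvious) reducedness of the zero-dimensional generic fibre and flatness over $B$, this forces reducedness at $x$. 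Transferring these local properties to $\mathcal G^\circ$ uses only that the two functors parameterize the same data and agree set-theoretically over $\mathrm{Hilb}^\square_d(X/B)$. Finally, flatness at $x$ over the DVR $B$ together with both fibres being zero-dimensional and $\mathcal O_{\mathcal G^\circ,x}$ reduced forces $\mathcal O_{\mathcal G^\circ,x}\cong \mathcal O_B$, which is precisely the desired unique smoothing of $x$ to an $\eta$-point.
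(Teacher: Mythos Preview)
Your overall plan---pass to $\mathcal G_D$, apply Theorem~\ref{thm:dimension of linked determinantal loci} to get $\dim_x\mathcal G_D\geq 1$, then conclude flatness over the trait---matches the paper's in spirit, but there are two genuine gaps and one factual error.

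First, your transfer from $\mathcal G_D$ to $\mathcal G^\circ$ is illegitimate. Remark~\ref{rem:scheme structure of moduli} says explicitly that it is \emph{unclear} whether $\mathcal G^\circ$ and $\mathcal G_D$ agree as schemes; they are only known to agree set-theoretically. So proving reducedness or flatness for $\mathcal G_D$ does not automatically give you the same for $\mathcal G^\circ$. The paper uses the set-theoretic agreement only to transport the \emph{dimension} lower bound (via \cite[Corollary 5.1]{osserman2015relative}), and then proves reducedness of $\mathcal G^\circ_0$ directly, by an entirely separate argument.

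Second, your reducedness argument fails independently. You assert that $\mathcal G_D$ is Cohen--Macaulay at $x$ because it meets the regular ambient space in the codimension predicted by Theorem~\ref{thm:dimension of linked determinantal loci}; but that theorem only gives a \emph{lower} bound on dimension and says nothing about Cohen--Macaulayness of linked determinantal loci---indeed these are intersections of many determinantal conditions, not a single one. The paper instead shows that, locally at $x$, $\mathcal G^\circ_0$ sits inside a product of intersections of opposite Schubert varieties in Grassmannians $Gr(r_2,\widetilde V^1_{x,v})$; these Richardson-type intersections are reduced by \cite{kreiman2004richardson}. This local analysis crucially uses that each component carries at most one simple base point (Theorem~\ref{simplicity_of_base_points}) and identifies the distinguished section $f_{x,\tilde v}$ cutting it out.

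Finally, your $\underline r$-strictness argument contains a factual error: for the periodic word type the vanishing sequences at the nodes are \emph{nearly-consecutive} (Lemma~\ref{nearly_consecutive_lemma}), not $(0,1,\dots,r_1)$. More to the point, the paper's route to $\underline r$-strictness is not a direct kernel count but rather the observation that the included series is \emph{refined} (Lemma~\ref{refinedness_of_included_series}); refinedness is what forces equality in the strictness inequality. You should invoke the combinatorial results of Section~\ref{combinatorics_of_LLS} (isolatedness of $x$, refinedness, simplicity of base points) rather than attempt an ad hoc verification.
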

\begin{proof}
The results of Section~\ref{combinatorics_of_LLS} imply that $x$ is isolated in $\mathcal G^\circ_0$, and that both the ambient and included limit linear series of $x$ are refined. By \cite[Proposition 3.7]{osserman2015relative} it therefore suffices to show that (1) the map $\mathcal G^\circ\rightarrow B$ has universal relative dimension at least $0$ at $x$; and (2) $\mathcal G^\circ_0$ is reduced at $x$.

Proposition~\ref{prop:two constructions compatible} together with \cite[Corollary 5.1]{osserman2015relative} reduces item (1) to showing that $\mathcal G_D$ has dimension at least 1 at $x$. Accordingly, choose $D$ so that $|D_0|$ avoids the set of base points of $x$. We then have 
$$\dim \mathcal G^1_D=1+g+d+|V(\Gamma)|\big((r_2+1)(r_1-r_2)+(r_1+1)(d_1+\tilde d-g-r_1)\big).$$ 
It follows that each irreducible component of 
$\mathcal G^2_D$ has dimension at least
$$\dim \mathcal G^1_D-\sum_vd_v(r_1+1)=\dim \mathcal G^1_D-\tilde d(r_1+1).$$
since it has codimension at most $\sum_vd_v(r_1+1)=\tilde d(r_1+1)$. Similarly $\widetilde{\mathcal G}^3_D$ has dimension at least 
$$\dim \mathcal G^1_D-\tilde d(r_1+1)- d |V(\Gamma)|(r_2+1)$$
near $x$. On the other hand, since the included limit linear series corresponding to $x$ is refined, the linked chain of flags that cuts out $\widetilde{\mathcal G}_D$ inside $\widetilde {\mathcal G}^3_D$ is $(r_2+1,r_1+1)$-strict at $x$, and the codimension of $\widetilde{\mathcal G}_D$ inside $\widetilde {\mathcal G}^3_D$ is at most 
$$|E(\Gamma)|\big((r_2+1)(d_2+\tilde d-r_2-g)+(r_1-r_2)(d_1+\tilde d-r_1-g)\big)$$
near $x$ by Theorem~\ref{thm:dimension of linked determinantal loci}. A straightforward calculation now shows that $\widetilde{\mathcal G}_D$ has dimension at least $\rho_{g,r_1,d_1}+\mu+1$ near $x$, as does $\mathcal G_D$. So item (1) is proved.

We now prove item (2). To this end, let $( L_x,E_x,(V^2_{x,v},V^1_{x,v})_{v\in V(\Gamma)})$ be the special tuple represented by $x$. For each $v\in V(\Gamma)$ let $e_v$ denote the degree of $E_x$ in $Z_v$. According to Theorem~\ref{simplicity_of_base_points} of Section~\ref{combinatorics_of_LLS}, we have $e_v\in\{0,1\}$ for every $v$. Let $V^1(\Gamma)$ denote the set of vertices $\tilde v$ for which $e_{\tilde v}=1$. We know that $$x_1:=( L_x,(V^1_{x,v})_v)\in  G^{r_1}_{w_0}(X/B)_0=G^{r_1}_{w_0}(X_0)$$ is a refined limit $\mathfrak g^{r_1}_{w_0}$ on $X_0$. 
But \cite[Proposition 4.2.6, Proposition 4.2.9]{osserman2014higherlimit} establishes that the moduli space $G^{r_1}_{w_0}(X_0)$ is (schematically) isomorphic to the Eisenbud--Harris moduli space of limit linear series over refined locus. In particular, $G^{r_1}_{w_0}(X/B)_0$ is reduced at $x_1$.

By construction, 
$\mathcal G^2_0$ is locally isomorphic near $x$ to the closed subscheme of 
$$\mathrm{Hilb}^\circ_d(X_0)\times \prod_{v\in V(\Gamma)}Gr(r_2,\widetilde V^1_{x,v})$$
cut out by the (refined) ramification condition on $\prod_{v\in V(\Gamma)}Gr(r_2,\widetilde V^1_{x,v})$ induced by $( L_x,(V^2_{x,v})_v)\in G^{r_2}_{w_0}(X_0)$, where $\widetilde V^1_{x,v}$ denotes the restriction of $V^1_{x,v}$ to $Z_v$. Let $\mathcal G^2_x$ be a neighborhood of $x$ in $\mathcal G^2_0$ that witnesses this (local) isomorphism. Then by construction for each $\tilde v\in V^1(\Gamma)$ and each point 
$y=(E_y,(\widetilde V^2_{y,v})_v)$ in $\mathcal G^2_x$, we have that $\widetilde V^2_{y,\tilde v}\subset \widetilde V^1_{x,\tilde v}$ contains a fixed vector $f_{x,\tilde v}\in \widetilde V^1_{x,\tilde v}$, determined by $x$.

On the other hand, any sufficiently small neighborhood $U_x$ of $x$ in $\mathcal G^1_0$ is locally isomorphic to
$$
\prod_{\tilde v\in V^1(\Gamma)}\mathrm{Hilb}^\circ_1(Z_{\tilde v})\times \prod_{v\in V(\Gamma)}Gr(r_2,\widetilde V^1_{x,v}).
$$
Here $\mathrm{Hilb}^\circ_1(Z_{\tilde v})=Z^\circ_{\tilde v}$. Thus we see that the vanishing locus of \eqref{eqn:G} on $U_x$ is contained in the vanishing locus of
\begin{equation}\label{eqn:Ux}
\varphi^*_v\mathscr V^2_{x,v}\hookrightarrow\widetilde \delta_{v*}  \delta^*_v L^v_x=H^0(Z_v, L^v_x)\otimes \mathscr O_{U_x}\rightarrow \widetilde\delta_{v*}(\delta^*_v L^v_x /(\delta^*_v L^v_x\otimes\sigma^*_v\mathcal I^\circ_v )).\end{equation}
Here $\mathscr V^2_{x,v}\subset \widetilde V^1_{x,v}\otimes \mathscr O_{Gr(r_2,\widetilde V^1_{x,v})}$ is the universal subbundle, $ L^v_x$ is the restriction of $( L_x)_{w_v}$ to $Z_v$, and $\mathcal I^\circ_v$ is the universal ideal sheaf on $\mathrm{Hilb}^\circ_{e_v}(Z_v)\times Z_v$. Note that the determinantal locus is compatible with pullback, and here the main way in which \eqref{eqn:G} differs from \eqref{eqn:Ux} is that we are now restricting the universal ideal sheaf $\mathcal I^\circ$ on $\mathrm{Hilb}^\circ_d(X_0)\times X_0$ to $Z_v$.  We have the following commutative diagram:
$$
\begin{tikzcd}
Z_v&U_x\times Z_v\lar{\delta_v}\dar\rar{\widetilde \delta_v}&U_x\dar{\sigma_v}\rar{\varphi_v}&Gr(r_2,V^1_{x,v}) \\
& \mathrm{Hilb}^\circ_{e_v}(Z_v)\times Z_v\rar{\widetilde \delta_v}&\mathrm{Hilb}^\circ_{e_v}(Z_v).
\end{tikzcd}
$$

Restricting \eqref{eqn:Ux} to $\mathcal G^2_x$, we see that for every $\tilde v\in V^1(\Gamma)$, the determinantal ideal always contains the condition imposed by vanishing of the image $\bar f_{x,\tilde v}\in H^0(\mathcal G^2_x,\widetilde\delta_{\tilde v*}(\delta^*_{\tilde v} L^{\tilde v}_x /(\delta^*_{\tilde v} L^{\tilde v}_x\otimes\sigma^*_{\tilde v}\mathcal I^\circ_{\tilde v} )))$ of $\widetilde\delta_{\tilde v*}\delta^*_{\tilde v}f_{x,\tilde v}$. Let $i_{\tilde v}\colon \mathrm{Hilb}^\circ_1(Z_{\tilde v})\hookrightarrow Z_{\tilde v}$ denote the natural inclusion, which induces a diagonal map $\Delta_{\tilde v}\colon \mathrm{Hilb}^\circ_1(Z_{\tilde v})\rightarrow  \mathrm{Hilb}^\circ_1(Z_{\tilde v})\times Z_{\tilde v}.$ We then have 

$$
\begin{tikzcd}\mathrm{Hilb}^\circ_1(Z_{\tilde v})\rar{i_{\tilde v}}&Z_{\tilde v}&\\
U_x\uar{\sigma_{\tilde v}}\rar{\Delta_{\tilde v}}\dar&U_x\times Z_{\tilde v}\uar{\delta_{\tilde v}}\dar{\sigma_{\tilde v}}\rar{\widetilde \delta_{\tilde v}}&U_x\dar{\sigma_{\tilde v}}\\ \mathrm{Hilb}^\circ_1(Z_{\tilde v})\rar{\Delta_{\tilde v}}
& \mathrm{Hilb}^\circ_1(Z_{\tilde v})\times Z_{\tilde v}\rar{\widetilde \delta_{\tilde v}}&\mathrm{Hilb}^\circ_1(Z_{\tilde v}).
\end{tikzcd}
$$
Since $\widetilde\delta_{\tilde v}\circ \Delta_{\tilde v}$ is the identity map on $\mathrm{Hilb}^\circ_1(Z_{\tilde v})$, and $\mathcal I^\circ_{\tilde v}$ is the ideal sheaf of $\Delta_{\tilde v}(\mathrm{Hilb}^\circ_1(Z_{\tilde v}))$, it follows that
$$\widetilde\delta_{\tilde v*}(\delta^*_{\tilde v} L^{\tilde v}_x /(\delta^*_{\tilde v} L^{\tilde v}_x\otimes\sigma^*_{\tilde v}\mathcal I^\circ_{\tilde v} ))=\Delta^*_{\tilde v}\delta^*_{\tilde v} L^{\tilde v}_x=\sigma^*_{\tilde v}i^*_{\tilde v} L^{\tilde v}_x$$
and $\bar f_{x,\tilde v}=\sigma^*_{\tilde v}i^*_{\tilde v}f_{x,\tilde v}$. Since $f_{x,\tilde v}$ cuts out a single point $P_{x,\tilde v}$ in $  Z^\circ_{\tilde v}$ by Theorem~\ref{simplicity_of_base_points}, we see that, locally at $x$, $\mathcal G^\circ_0$ is contained in $$\widetilde{\mathcal G}_x=\prod_{\tilde v\in  V^1(\Gamma)}\Big(P_{x,\tilde v}\times Gr(r_2,\widetilde V^1_{x,\tilde v}(-P_{x,\tilde v})\Big) \times \prod_{v\in V(\Gamma)\backslash V^1(\Gamma)}Gr(r_2,\widetilde V^1_{x,v}).$$

Now imposing the ramification condition that defines $\mathcal G^2_0$ inside $\widetilde {\mathcal G}_x$, we get a product over $\tilde v\in V^1(\Gamma)$ (resp., $v\in V(\Gamma)\backslash V^1(\Gamma)$) of intersections of pairs of Schubert varieties in $Gr(r_2,\widetilde V^1_{x,\tilde v}(-P_{x,\tilde v}))$ (resp., $Gr(r_2,\widetilde V^1_{x,v})$), 
that is zero-dimensional. According to \cite[Theorem 4.3.1]{kreiman2004richardson}, it is reduced;  and locally it contains $\mathcal G^\circ_0$. Hence $\mathcal G^\circ_0$ is reduced at $x$.
\end{proof}

\section{Combinatorics of inclusions of limit linear series}\label{combinatorics_of_LLS}
In this section we will describe (and partially implement) a general algorithm for computing the number of secant planes on a general curve when both of the basic dimension parameters $\rho$ and $\mu$ are zero. To do so, we count inclusions of limit series
\begin{equation}\label{basic_inclusion}
g^{s-d+r}_{m-d}+ p_1+ \dots+ p_d \hra g^s_m
\end{equation}
along a general chain $X= E_1 \cup \dots \cup E_g$ of elliptic curves whenever $\rho(g,s,m)=\mu(d,r+s-d,s)=0$ and the ambient $g^s_m$ is of word type $\ubr{(12 \cdots (s+1))}_{\text{u times}}$, where $u \geq 1$.\footnote{Note the change of variables relative to the preceding sections: $s=r_1$ and $s-d+r=r_2$. Recall that $d=d_1-d_2$.} Here the fact that $\mu=0$ implies that $1 \leq r \leq s-1$. The extremal cases $r=1$ and $r=s-1$ are distinguished, 
and accordingly we will pay particular attention to these.

\begin{defn}\label{type_A_inclusion}
A {\it good} inclusion of limit linear series on a curve $X$ of compact type is an inclusion \eqref{basic_inclusion} for which 
\begin{enumerate}
\item No base point $p_i$ lies along a point of attachment of $X$; and
\item At most one base point $p_i$ of the included $(s-d+r)$-dimensional series lies along any given component.
\end{enumerate}
\end{defn}
\begin{si}
In the remainder of this section, we will always assume $X= E_1 \cup \dots \cup E_g$ is a general chain of elliptic curves and $\rho=\mu=0$. We mark each component $E_j$, $j=1,\dots,g$ in two points $p$ and $q$, whose difference is nontorsion; we always mark components in their points of attachment with neighboring components.
\end{si}
Our first aim is to show that whenever the ambient $g^s_m$ is of word type $\ubr{(12 \cdots (s+1))}_{\text{u times}}$, all inclusions \eqref{basic_inclusion} are good. We will first show that inclusions that satisfy item (1) in Definition~\ref{type_A_inclusion} automatically satisfy item (2). In order to do so, we will make crucial use of the following technical device.

\begin{defn}\label{shift}
Let $E$ denote a smooth curve marked in distinct (fixed) points $p$ and $q$, and assume $g^{r_2}_{d_2} \hookrightarrow g^{r_1}_{d_1}$ is an inclusion of linear series on $E$. 
The vanishing sequences at $p$ and $q$ of the included $g^{r_2}_{d_2}$ along $E$ are subsequences $a_{v(p)}=(a_{v(0)},\dots,a_{v(r_2)})$ and $b_{w(q)}=(b_{w(0)},\dots,b_{w(r_2)})$ of the vanishing sequences $a_p=(a_0,\dots,a_s)$ and $b_q=(b_0,\dots,b_s)$ at $p$ and $q$, respectively, of the ambient $g^{r_1}_{d_1}$. The corresponding {\it shift} of the included $g^{r_2}_{d_2}$ along $E$ is $\sum_{i=0}^{r_2} (w(i)-v(i))$.
\end{defn}

Shifts in the context of inclusions of limit linear series play a role analogous to that of the monotonicity property systematically exploited by Eisenbud and Harris 
which establishes, roughly, the existence of compatible bases for the aspects of a limit series along a chain of smooth curves whose vanishing orders increase as one moves along the chain; see, e.g. \cite[Lem 5.2]{MOCURVE}. Shifts quantify how the inclusions of aspects along the components change as one moves ``downward" (i.e., in the monotone direction) along the chain. That is, the vanishing orders of the aspects of the included series, thought of as vanishing orders of sections of the ambient series {\it shift} relative to their positions at the top of the chain. In particular, monotonicity implies that there is a natural notion of {\it cumulative shift} for the included series in \eqref{basic_inclusion} along a chain of elliptic curves, obtained by summing the shifts of its aspects along the individual components.

\begin{rem}\label{basic_heuristic} 
Given an inclusion \eqref{basic_inclusion} of limit linear series along an elliptic chain, because each index of the included $g^{s-d+r}_m$ can shift by at most $s-(s-d+r)=d-r$ places, the cumulative shift of the $g^{s-d+r}_m$ along the chain is bounded above by $(s-d+r+1)(d-r)$, which in turn is equal to $d(s-d+r)$ since we assume $\mu(d,r,s)=0$. In particular, we would expect the presence of a base point along an elliptic component to impose $s-d+r$ shifts on our included $g^{s-d+r}_m$.
\end{rem}

\begin{rem}\label{series_on_ell_curves}
In studying how base points affect the vanishing sequences of the included $g^{s-d+r}_m$, viewed as subsequences of the ambient limit series $g^s_m$, we will crucially use the fact that the vanishing sequences of the aspects of a limit series of type $\ubr{(12 \cdots (s+1))}_{\text{u times}}$ at marked points along the elliptic chain are nearly increasing sequences of consecutive integers of length $(s+1)$.
\end{rem}

\begin{defn}\label{nearly-consecutive_sequence}
A {\it nearly-consecutive} increasing sequence of integers of length $\ell \geq 1$ is a sequence of integers $a_1, a_2, \dots, a_{\ell}$ for which every $a_j-a_{j-1}=1$ for every $j \neq j_0$ for some $2 \leq j_0 \leq \ell$, while $a_{j_0}-a_{j_0-1}=2$. We call $j_0$ the {\it distinguished index}.
\end{defn}

\begin{lem}\label{nearly_consecutive_lemma}
The vanishing sequences of the aspects of the limit series $g^s_m$ of type $\ubr{(12 \cdots (s+1))}_{\text{u times}}$ at the points of attachment of the elliptic chain $X$ are nearly-consecutive sequences.
\end{lem}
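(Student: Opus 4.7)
The plan is to verify the lemma by an explicit computation of the vanishing sequences at the nodes of the chain. The key ingredient is the combinatorial rule relating the word $w = w_1 \cdots w_g$ of a limit linear series to its vanishing sequences on a general chain of elliptic curves: starting from the initial sequence $(0, 1, \ldots, s)$, each letter $w_t \in \{1, \ldots, s+1\}$ read updates the vanishing sequence by leaving the $(w_t - 1)$-th entry fixed while advancing every other entry by $1$. Equivalently, if $a^j = (a^j_0, \ldots, a^j_s)$ denotes the vanishing sequence at the $j$-th node of the chain (viewed from one chosen side), then $a^j_k = k + j - \#\{t \le j : w_t = k+1\}$.

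Next I would apply this to the specific word $w = (12 \cdots (s+1))^u$. Writing $j = q(s+1) + r$ with $0 \le q < u$ and $0 \le r \le s$, a direct count gives $\#\{t \le j : w_t = k+1\} = q+1$ if $k+1 \le r$ and $q$ otherwise. Substituting yields
\[
a^j_k = \begin{cases} k + qs + r - 1, & 0 \le k \le r - 1,\\ k + qs + r, & r \le k \le s.\end{cases}
\]
Inspecting consecutive differences, $a^j_{k+1} - a^j_k = 1$ within each of the two sub-blocks and equals $2$ at the transition $k = r - 1 \to k = r$. For $1 \le r \le s$, this is precisely the nearly-consecutive profile of Definition~\ref{nearly-consecutive_sequence}, with distinguished index $j_0 = r + 1$.

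Finally, the vanishing at the same node from the opposite side is given by the duality $b_i = m - a^j_{s - i}$, a consequence of refined-ness of the ambient series; a direct check shows that this dualization preserves the nearly-consecutive property, merely shifting the distinguished index to $j_0 = s + 2 - r$. The main subtlety is thus purely combinatorial bookkeeping: establishing the iterative vanishing rule from the word-to-limit-series dictionary, and handling the edge case $r = 0$ (a node at a block boundary of the word), where the sequence $(qs, qs + 1, \ldots, qs + s)$ is already strictly consecutive and is treated as the degenerate case of nearly-consecutivity. No additional geometric input is required beyond the correspondence between words and limit linear series already invoked in the preceding discussion.
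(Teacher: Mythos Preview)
Your proof is correct and follows essentially the same approach as the paper: both rely on the standard word-to-vanishing dictionary for limit linear series on elliptic chains (one entry stays fixed per component, the others advance by $1$), and both determine the distinguished index from the position of the letter in the cyclic word. Your version is simply more explicit, carrying the induction through to the closed formula $a^j_k = k + j - \#\{t \le j : w_t = k+1\}$ and reading off the single gap directly, whereas the paper argues inductively that the nearly-consecutive shape is preserved step by step; you also check the opposite side of each node via the refined duality $b_i = m - a^j_{s-i}$, which the paper leaves implicit. Your handling of the boundary case $r=0$ (a strictly consecutive sequence) as a degenerate instance matches the paper's own informal treatment of the sequence $(0,1,\dots,s)$ at the extremal component.
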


\begin{proof}
Let $E_j$ denote the $j$th elliptic component of $X$. If either $j=1$ or $j=g$, the desired conclusion is clear: indeed, the vanishing sequences at the marked points along $E_1$ and $E_g$ are $(0,1,\dots,s)$ and $(m-s-1, m-s, \dots, m-2,m)$, irrespective of the word type of the $g^s_m$. So assume $1<j<g$. Monotonicity of the $g^s_m$ implies that the vanishing sequences of the aspects $a_p=(a_0,\dots,a_s)$ in the ``first" marked points $p$ along each component comprise an increasing sequence of $(s+1)$-tuples. More precisely, letting $p(j)$ and $p(j+1)$ denote the first marked points along $E_j$ and $E_{j+1}$, respectively, the vanishing sequence $a_{p(j+1)}$ is derived from $a_p$ by adding one in every coordinate except one for which the vanishing order remains fixed (here we apply $\rho(g,s,m)=0$). The assumption that $g^s_m$ is of type $\ubr{(12 \cdots (s+1))}_{\text{u times}}$ guarantees that the corresponding index $i(j)$ is exactly $(j-1) \text{ mod } (s+1)$, and that $a_{p(j)}$ is the unique nearly-consecutive increasing sequence with distinguished index $i(j)$ at which the corresponding vanishing order is $2 (j-1) \text{ mod } (s+1)$.

\end{proof}

\begin{prop}\label{simplicity_of_base_points}
At most one (simple) base point $p_i$ of the included $(s-d+r)$-dimensional series in a good inclusion \eqref{basic_inclusion} lies along any given elliptic component.
\end{prop}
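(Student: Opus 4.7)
The plan is to sharpen the heuristic of Remark \ref{basic_heuristic} into matching upper and lower bounds on the cumulative shift of the included series along $X$. For each elliptic component $E_j$, let $k_j$ denote the total multiplicity with which the base points $p_1,\dots,p_d$ meet $E_j$. Condition (1) of goodness guarantees $\sum_{j=1}^g k_j = d$. Let $\sigma_j$ denote the shift of the included aspect along $E_j$ in the sense of Definition \ref{shift}, and let $\sigma := \sum_j \sigma_j$ be the cumulative shift from the leftmost marked point of $E_1$ to the rightmost marked point of $E_g$; this is well-defined because monotonicity applies to both the ambient and included limit series when the ambient is of the given word type.

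The aim is to prove the matching bounds
\[
\sigma \;\leq\; (r_2+1)(d-r) \;=\; d\,r_2 \qquad\text{and}\qquad \sigma \;\geq\; \sum_{j=1}^g k_j\,r_2,
\]
where $r_2 := s-d+r$, with the second inequality strict unless every $k_j\in\{0,1\}$ and each $p_i$ occurs with multiplicity one. Since $\sum_j k_j = d$, equality throughout forces every $k_j\leq 1$ and every $p_i$ simple, which is the desired conclusion. The upper bound is exactly Remark \ref{basic_heuristic}: each of the $r_2+1$ coordinate differences $w(i)-v(i)$ is at most $s-r_2=d-r$, and $\mu=0$, i.e.\ $d=r(r_2+1)$, rewrites $(r_2+1)(d-r)$ as $d\,r_2$.

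The lower bound is the main technical step. On $E_j$, the aspect of the included series is a $g^{r_2}_{d_{2,j}}$ sitting inside the ambient aspect $g^s_{m_j}$ with base divisor $B_j$ of degree $k_j$ disjoint from $\{p_j, q_j\}$ by condition (1) of goodness. Every section $\tau\in V_j^2$ vanishes along $B_j$ and to prescribed orders at $p_j,q_j$, so $\ord_{p_j}(\tau) + \ord_{q_j}(\tau) + k_j \leq m_j$. Combining this with the nearly-consecutive shape of the ambient vanishing sequences (Lemma \ref{nearly_consecutive_lemma}) and the non-torsion condition on $p_j-q_j$ (general chain), one extracts a sharp combinatorial pairing between the sub-sequence positions $\{v_j(i)\}$ and $\{w_j(r_2-i)\}$ that translates into $\sigma_j \geq k_j r_2$. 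Equality requires all pairings to saturate simultaneously, which is only compatible with $k_j\leq 1$ and $B_j$ reduced; otherwise, the included sub-sequence has to advance by strictly more than $k_j r_2$ positions in order to absorb the extra independent vanishing constraints at the additional points of $B_j$.

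The main obstacle is making the per-component estimate $\sigma_j \geq k_j r_2$ and its equality characterization rigorous. This requires a careful analysis on a general elliptic curve (with $p-q$ non-torsion) of the interplay between a base divisor and the vanishing combinatorics of a sub-series of an ambient series whose vanishing sequence is nearly-consecutive. The analysis can be carried out directly by computing the dimension of $V_j^2$ realized as a subspace of $H^0(L|_{E_j}(-B_j))\cap V_j^1$, or more conceptually by invoking the uniqueness of $g^r_d$'s with prescribed ramification on general elliptic curves that underlies the classification in Lemma \ref{nearly_consecutive_lemma}.
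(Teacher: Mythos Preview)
Your plan is exactly the paper's: sandwich the cumulative shift $\sigma$ between the global upper bound $(r_2+1)(d-r)=dr_2$ of Remark~\ref{basic_heuristic} and the sum of per-component lower bounds $\sigma_j\geq k_j r_2$, strict when $k_j\geq 2$. What remains is precisely the step you flag as ``the main obstacle'': proving the per-component inequality and its strictness. The paper carries this out by translating the question into a purely combinatorial statement (Claim~\ref{nearly_consecutive_sequences}) about how many positions a length-$(r_2+1)$ subsequence of a nearly-consecutive sequence must shift inside the $m^\circ$-complement in order to satisfy ${\rm L}^*+{\rm Q}\leq (m-M-1,\dots,m-M-1,m-M)$; the proof is a direct case analysis on the relative sizes of $M$ and $r_2$ and on the position of the distinguished index.

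Your proposal, by contrast, gestures at two alternative routes---a dimension count on $H^0(L|_{E_j}(-B_j))\cap V^1_j$, or a uniqueness statement for ramified series on an elliptic curve---without showing that either actually yields the needed inequality on \emph{positions} (not just dimensions) with the required strictness for $k_j\geq 2$. As written this is a correct outline but not a proof: the substantive content of the proposition lies entirely in the step you have deferred. Two further points worth noting: (i) your global argument also needs the $k_j=1$ case $\sigma_j\geq r_2$, which both you and the paper leave implicit but which is not automatic and should be checked alongside the $k_j\geq 2$ case; (ii) the strictness assertion must also rule out a single base point of multiplicity $\geq 2$, which again comes down to the same combinatorial analysis.
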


\begin{proof}
Say that $M$ base points (possibly with multiplicities) lie along the component $E_j$.  
As in Definition~\ref{shift} above, the vanishing sequences at $p$ and $q$ of the included $g^{s-d+r}_{m}$ along $E_j$ are subsequences $a_{v(p)}=(a_{v(0)},\dots,a_{v(s-d+r)})$ and $b_{w(q)}=(b_{w(0)},\dots,b_{w(s-d+r)})$ of the vanishing sequences $a_p=(a_0,\dots,a_s)$ and $b_q=(b_0,\dots,b_s)$ at $p$ and $q$, respectively, of the ambient $g^s_m$.
We now claim that, if $M>1$, the corresponding shift of the included series satisfies $\sum_{i=0}^{s-d+r} (w(i)-v(i)) \geq 1+M(s-d+r)$.
In light of Remark~\ref{basic_heuristic}, the claim will imply that $M \leq 1$. Further, whenever $j=1$ or $j=g$, it is easy to see that a shift of $(M-1)+M(s-d+r)$ vanishing orders along the component is forced, so hereafter we assume $1<j<g$. In this case, Lemma~\ref{nearly_consecutive_lemma} reduces us to proving the following statement about nearly-consecutive sequences.

\begin{claim}\label{nearly_consecutive_sequences}
Fix a choice of positive integers $s<m$ and let $i_0$ and $M$ be positive integers for which $M \geq 2$ and $2 \leq i_0 \leq s$. Let ${\rm L}(i_0,s,m)$ denote the unique increasing nearly-consecutive sequence of nonnegative integers of length $s+1$ beginning in 0 and ending in $s$ with distinguished index $i_0$, and let ${\rm L}^{\circ}={\rm L}^{\circ}(i_0,s,m)$ denote the {\it $m^{\circ}$-complementary} sequence defined by ${\rm L}^{\circ}_i=m-1-{\rm L}_i$ for all $i \neq i_0$, and ${\rm L}^{\circ}_{i_0}=m-{\rm L}_{i_0}$. Further choose an increasing subsequence ${\rm L}^*$ of ${\rm L}$ with length $s^* \leq s$. Then every decreasing subsequence ${\rm Q}$ of ${\rm L}^{\circ}$ for which
\[
{\rm L}^*+ {\rm Q} \leq (m-M-1,\dots,m-M-1,m-M)
\]
is obtained from the $m^{\circ}$-complement $({\rm L}^*)^{\circ}$ by shifting in at least $M s^*+1$ places.
\end{claim}

Note that the assumption that ${\rm L}$ begin in 0 merely represents a convenient choice of normalization, and corresponds to removing the base points concentrated in $p$ of the ambient $g^s_m$. 
\begin{proof}[Proof of Claim~\ref{nearly_consecutive_sequences}]
We treat first the case $M \geq s^*+3$,
which geometrically corresponds to the situation in which the number $M$ of base points is large relative to the dimension $s^*=s-d+r$ of the included series. The point is that in this regime, we have $(M-1)s^*+ M-2 \geq M s^*+1$, so it suffices to show that $(M-1)s^*+ M-2$ shifts are forced. 

To do so, assume that the distinguished index $i_0$ does not belong to ${\rm L}^*$. Then ${\rm L}^*_i+ ({\rm L}^*)^{\circ}_i=m-1$ for all $i$. It is clear that in passing from $({\rm L}^*)^{\circ}_i$ to ${\rm Q}_i$, at least $M-2$ shifts are required, irrespective of how large $M$ is relative to $i_0-{\rm L}^*_i$. But in fact at most {\it one} $({\rm L}^*)^{\circ}_i$ may shift by exactly $M-2$ places. The analysis when $i_0$ belongs to ${\rm L}^*$ is analogous (and easier); we conclude immediately.

Now say $2 \leq M \leq s^*+2$. Given an arbitrary increasing length-$(s^*+1)$ subsequence ${\rm L}^*$ of ${\rm L}$, let
${\rm S}_1= {\rm S}_1({\rm L}^*)$ denote the collection of elements of ${\rm L}^*$ lying either to the right of ${\rm L}_{i_0}$, or at least $M$ places to the left of ${\rm L}_{i_0}$, and let ${\rm S}_2$ denote the complement of ${\rm S}_1$ in ${\rm L}^*$. Let $a^*=a^*({\rm L}^*)$ denote the cardinality of ${\rm S}_1$.
Note that every element of ${\rm S}_1$ (resp., ${\rm S}_2$) except for possibly one contributes $M$ forced shifts; the exceptional element, if it exists, contributes $M-1$ (resp., $M-2$) shifts. Because there is at most one exceptional element, it follows that ${\rm L}^*$ is associated with at least $\nu(a^*):=(M-1)(s^*+1-a^*)+ Ma^*-1$ forced shifts, and we may conclude provided $a^* \geq s^*+3-M$. Indeed, $\nu$ is an increasing function of $a^*$, and by rewriting we see that $\nu(s^*+3-M)= Ms^*+1$. 

It remains to handle those cases in which $a^* < s^*+3-M$. Note that $\# {\rm S}_2 \leq M$ by construction, so $a^* \geq s+1-M$ is automatic. 
Accordingly there are two basic situations, depending upon whether $a^*=s^*+1-M$ or $a^*=s^*+2-M$. In the first situation, the $M$ elements ${\rm L}_{i_0-j+1}$, $j=1, \dots, M$ belong to ${\rm L}^*$. 
More precisely, 
\[
{\rm S}_2=\{{\rm L}_{i_0-M+1}, \dots, {\rm L}_{i_0}\}= \{i_0-M, \dots, i_0-2,i_0\}.
\]
It is easy to check that ${\rm S}_1$ induces at least $(s^*+1-M)M-\epsilon$ shifts, while ${\rm S}_2$ minimally induces $M(M-1)+1+\epsilon$ shifts, where the value of $\epsilon$ is 1 (resp., 0) when distinguished index of $L^*$ belongs to ${\rm S}_1$ (resp, ${\rm S}_2$).

The case in which $a^*=s^*+2-M$ is related to the preceding case by exchanging an element of ${\rm S}_2$ for an element of ${\rm S}_1$. It is easy, if tedious, to see that in so doing the number of induced shifts does not decrease.
\end{proof}


Proposition~\ref{simplicity_of_base_points} follows immediately from Claim~\ref{nearly_consecutive_sequences}.
\end{proof}

\begin{lem}\label{refinedness_of_included_series}
The included $(s-d+r)$-dimensional series in any good inclusion \eqref{basic_inclusion} is {\it refined}.
\end{lem}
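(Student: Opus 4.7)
The plan is to verify refinedness by a saturated shift-count argument. The key inputs are Proposition~\ref{simplicity_of_base_points} (which tells us that base points are distributed one per component on $d$ distinct components) together with the cumulative shift upper bound $\Sigma \leq (s-d+r+1)(d-r) = d(s-d+r)$ coming from Remark~\ref{basic_heuristic} and the hypothesis $\mu = 0$.

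First, I will establish a lower bound on the shift contributed by each component. On $E_v$ carrying a single simple base point (necessarily away from the nodes by goodness of the inclusion) I claim $\sigma_v \geq s-d+r$; this is the $M=1$ analogue of Claim~\ref{nearly_consecutive_sequences}. The argument goes as in the proof of Proposition~\ref{simplicity_of_base_points}: a simple base point forces the length-$(s-d+r+1)$ subsequence chosen at the right node to sit at least $s-d+r$ positions higher, in index-sum, than the subsequence chosen at the left node, and the case-analysis on extremal components $E_1$ and $E_g$ mirrors that in the proof of Proposition~\ref{simplicity_of_base_points}. On base-point-free components, monotonicity of limit linear series along the chain (inherent in the rooted-tree structure of $\Gamma$) gives $\sigma_v \geq 0$. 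Summing over the $g$ components and using Proposition~\ref{simplicity_of_base_points}, we obtain $\Sigma \geq d(s-d+r)$. Combined with the Remark~\ref{basic_heuristic} upper bound, this pins $\Sigma = d(s-d+r)$ exactly, forcing $\sigma_v = s-d+r$ on each base-point component and $\sigma_v = 0$ on each base-point-free component.

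Second, I will deduce refinedness at every node from this tight identity. The ambient $g^s_m$ of word type $\underbrace{(12\cdots(s+1))}_{u\text{ times}}$ is itself a refined limit series, with complementary vanishings at each node by Lemma~\ref{nearly_consecutive_lemma}. Refinedness of the included series at the node between $E_v$ and $E_{v+1}$ is therefore equivalent to the index subsets $\alpha_v^+,\alpha_{v+1}^- \subset \{0,1,\ldots,s\}$ picked out by the included vanishings being complementary under the involution $j\mapsto s-j$, whereas the general LLS condition only guarantees $\alpha_{v+1}^- \succeq s-\alpha_v^+$ coordinate-wise. A nonzero defect at any internal node would produce additional index-shifts downstream that the remaining components cannot absorb, since their per-component shifts are already pinned at the values prescribed above; hence every node is refined.

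The main obstacle is formalizing the propagation of a putative node-defect to a contradiction with the tight shift bound. The cleanest approach is to track per-index cumulative shifts (each bounded by $d-r$ from the fixed endpoint $\alpha_1^- = \{0,1,\ldots,s-d+r\}$ to the fixed endpoint $\alpha_g^+=\{d-r,\ldots,s\}$), show that the saturation $\Sigma=(s-d+r+1)(d-r)$ forces each of the $s-d+r+1$ indices to realize exactly its maximum per-index shift of $d-r$, and observe that a defect at any node would require at least one index to exceed this maximum, a contradiction.
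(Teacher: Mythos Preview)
Your global saturation strategy is genuinely different from the paper's approach, which proceeds by induction on the base-point index and reduces the inductive step to a combinatorial statement about nearly-consecutive sequences (in the same spirit as Claim~\ref{nearly_consecutive_sequences}). The paper never invokes the cumulative shift bound from Remark~\ref{basic_heuristic} in the proof of this lemma.

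There is, however, a real gap in your argument. The assertion that ``monotonicity \dots\ gives $\sigma_v \geq 0$'' on base-point-free components is not justified, and in fact fails for general subspaces of the ambient aspect. Concretely, fix a component $E_v$ with ambient aspect $V$ and a basis $\tau_0,\dots,\tau_s$ adapted simultaneously to $p$ and $q$ (so $\mathrm{ord}_p(\tau_i)=a_i$ and $\mathrm{ord}_q(\tau_i)=b_{s-i}$). For the subspace $W=\mathrm{span}(\tau_{i_0},\dots,\tau_{i_{r_2}})$ one computes $\sigma_v=(r_2+1)s-2\sum_j i_j$, which is negative whenever the $i_j$ are chosen near the top of $\{0,\dots,s\}$. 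Nothing in the limit-linear-series condition rules this out a priori for the included aspect; the constraints at nodes govern how $P_{v+1}:=\sum_i v^{(v+1)}(i)$ relates to $Q_v:=\sum_i w^{(v)}(i)$, not how $Q_v$ relates to $P_v$ on a single component. Once $\sigma_v\geq 0$ fails, your lower bound $\Sigma\geq d(s-d+r)$ collapses.

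The same difficulty undermines your upper bound and the ``fixed endpoint'' claims. You assert $\alpha_1^-=\{0,\dots,s-d+r\}$ and $\alpha_g^+=\{d-r,\dots,s\}$, but the included aspect on $E_1$ is an arbitrary $(r_2+1)$-dimensional subspace of the ambient aspect, and nothing forces its vanishing at the non-nodal marked point to be minimal. More seriously, the per-component shifts telescope to ``end minus start'' only when the included series is refined at every node---precisely what you are trying to prove. A non-refinedness defect $\delta_v=P_{v+1}+Q_v-(r_2+1)s>0$ at a node decouples $\sigma_v$ from $\sigma_{v+1}$, so the bound $\Sigma\leq (r_2+1)(d-r)$ from Remark~\ref{basic_heuristic} cannot be invoked without circularity. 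Your final paragraph acknowledges this obstacle but the proposed fix (per-index shifts each bounded by $d-r$) again presupposes the telescoping you need refinedness to guarantee. To rescue the global approach you would need to track a quantity that genuinely telescopes across nodes regardless of refinedness---for instance, to show directly that $\sigma_v+\delta_{v-1}\geq 0$ on base-point-free components and $\sigma_v+\delta_{v-1}\geq s-d+r$ on base-point components---but this is not what you have written.
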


\begin{proof}
Because our ambient $g^s_m$ is refined by construction, refinedness of the included $g^{s-d+r}_m$ amounts to the statement that the sum of the vanishing sequences of the included $(s-d+r)$-dimensional aspect is always maximal possible, i.e. either 
$(m-2,\dots,m-2,m-1)$ or $(m-1,\dots,m-1,m)$ depending upon whether or not a (simple) base point lies along a given component. It also clearly suffices to prove maximality of vanishing on components containing base points $p_i$. Accordingly we may argue by induction on the index $1 \leq i \leq d$ of the given base point. Since the assertion is clear when $i=1$, it suffices to show that maximality-of-vanishing is preserved in the presence of a base point. Much as in the proof of Proposition~\ref{simplicity_of_base_points}, this in turn reduces to a purely combinatorial statement about nearly-consecutive sequences of integers that we leave to the reader.
\end{proof}

\begin{lem}\label{uniquely_prescribed_base_pt}
The position of a base point $p_i$ of (the included $g^{s-d+r}_m$ of) any good inclusion \eqref{basic_inclusion} along an elliptic component $E_j$ is uniquely prescribed.
\end{lem}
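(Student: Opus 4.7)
The plan is to pin down $p_i$ on $E_j$ as a specific divisor class determined by the refined vanishing data of the inclusion, via an Abel--Jacobi argument on the elliptic curve $E_j$.

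First, I would appeal to Lemma~\ref{refinedness_of_included_series} together with Lemma~\ref{nearly_consecutive_lemma} to identify the vanishing sequences $a^W_\bullet$ at $p$ and $b^W_\bullet$ at $q$ of the included aspect $W_j \subset V_j$ on $E_j$, where $V_j$ denotes the ambient aspect. Refinedness of $W_j$ on a component carrying a (simple) base point forces the balance $a^W_{s-d+r} + b^W_0 = m - 1$, one less than the degree $m$ of the line bundle $L_j$ underlying the ambient aspect under the standard concentrated multidegree.

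Second, I would isolate the (scalar-unique) section $\sigma \in W_j$ achieving maximal $p$-vanishing $a^W_{s-d+r}$. A basis of $W_j$ simultaneously adapted to $p$ and $q$ (which exists by refinedness) shows that $\sigma$ also realizes the minimal $q$-vanishing $b^W_0$. Hence, viewed as a section of $L_j$ (of degree $m$ on $E_j$), the effective divisor of $\sigma$ has the form
\[
\dv(\sigma) \;=\; a^W_{s-d+r}\cdot p + b^W_0\cdot q + p_i,
\]
with precisely one remaining zero, which must coincide with the base point $p_i$ since every section of $W_j$ vanishes there.

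Third, this yields the identity
\[
L_j \;\cong\; \mathcal{O}_{E_j}\bigl(a^W_{s-d+r}\, p + b^W_0\, q + p_i\bigr)
\]
in $\Pic(E_j)$. Rearranging, $p_i$ represents the class $[L_j] - a^W_{s-d+r}[p] - b^W_0[q] \in \Pic^1(E_j)$, and the Abel--Jacobi isomorphism $E_j \xrightarrow{\sim} \Pic^1(E_j)$ then pinpoints $p_i$ uniquely. The main subtlety lies in step two: verifying that a single section of $W_j$ simultaneously achieves extremal vanishing at both $p$ and $q$. This hinges on the refined condition (in Osserman's or Eisenbud--Harris's sense), which guarantees a basis of $W_j$ adapted to both marked points, combined with the explicit vanishing-sum pattern $(m-2,\dots,m-2,m-1)$ recorded in Lemma~\ref{refinedness_of_included_series}. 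The non-torsion hypothesis on $p - q$, implicit in the generality of the elliptic chain, prevents spurious divisor-class coincidences on $E_j$ that might otherwise leave $p_i$ ambiguous.
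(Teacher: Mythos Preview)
Your approach is essentially the paper's: locate a section $\sigma \in W_j$ whose orders of vanishing at $p$ and $q$ sum to $m-1$, so that its single remaining zero is forced to be the base point $p_i$, and then pin down $p_i$ via the resulting linear equivalence in $\Pic^1(E_j)$. The paper packages this as the existence of an \emph{aligned} pair $(a_p(k),a_q(k))$ guaranteed by the maximal-vanishing pattern of Lemma~\ref{refinedness_of_included_series}, and records the conclusion as $p_i \sim (\alpha - a_p(k))p + (\beta - a_q(k))q$ where $\mathcal O(\alpha p+\beta q)$ is the underlying aspect bundle.

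One small imprecision: you assert that the aligned section is specifically the one with extremal vanishing, i.e.\ that $a^W_{s-d+r} + b^W_0 = m-1$. But the pattern $(m-2,\dots,m-2,m-1)$ in Lemma~\ref{refinedness_of_included_series} records the multiset of paired sums rather than an ordered tuple; the unique index $k$ for which the sum equals $m-1$ varies with the component (compare Theorem~\ref{type_A_inclusions}, where in the $r=1$ case the aligned index on the component carrying the $k$th base point is $(2k-1)\bmod(2t+1)$, not the top index). So your step two does not quite go through as stated. Replacing your extremal $\sigma$ by the section realizing the actual aligned index---whichever it is---the rest of your argument is correct and identical to the paper's.
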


\begin{proof}
The maximality-of-vanishing property established in the proof of Lemma~\ref{refinedness_of_included_series} implies, in particular, that there is always a pair of {\it aligned} vanishing orders $(a_p(k),a_q(k))$ of the included series along each component containing a base point $p_i$. Letting $\mc{O}(\al p+ \be q)$ denote the line bundle underlying the aspect of our limit linear series along $E_j$, it then follows that the degree 0 line bundle $\mc{O}(\al p+ \be q-a_p(k)p -a_q(k)q- p_i)$ has a nonzero global section $\sig_j$. This, in turn, may only happen if $p_i$ is linearly equivalent to $(\al-a_p(k)) p+(\be-a_q(k))q$.
\end{proof}

\begin{rem}
The global section $\sig_j$ of the aspect line bundle on $E_j$ singled out in the proof of Lemma~\ref{uniquely_prescribed_base_pt} is a key ingredient in the proof of our smoothing theorem~\ref{thm:smoothing theorem} for inclusions of limit linear series in which the ambient series is of type $\ubr{(12 \cdots (s+1))}_{\text{u times}}$.
\end{rem}

\begin{prop}\label{points_of_attachment_avoidance}
Assume that $X$ is a general chain of $g$ elliptic curves. The {\it only} inclusions of limit linear series \eqref{basic_inclusion} for which the ambient $g^s_m$ is of combinatorial type $\ubr{(12 \cdots (s+1))}_{\text{u times}}$ are good.
\end{prop}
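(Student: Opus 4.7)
The plan is to argue by contradiction via the cumulative shift bound of Remark~\ref{basic_heuristic}. Suppose, toward a contradiction, that some base point $p_i$ of the included series lies at a point of attachment $\nu = q_j = p_{j+1}$ between adjacent components $E_j, E_{j+1}$ of the chain.

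First, I unpack what it means for a base point to lie at a node. The ideal sheaf $\cI_{\nu} \subset \mO_{X_0}$ is non-Cartier at $\nu$: locally $\mO_{X_0,\nu}\cong k[[x,y]]/(xy)$ and $\cI_{\nu} = (x,y)$. Hence any section of $\mL(\mtd_v) \otimes \cI_{\nu}$ must vanish at $\nu$ on \emph{both} branches of the normalization. Thus the included aspect on $E_j$ has a (simple) base point at its endpoint $q_j$, and the aspect on $E_{j+1}$ has one at its endpoint $p_{j+1}$: the single subscheme point $\nu \in Z$ corresponds to two separate base-point conditions, one on each of the adjacent components.

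Second, I quantify the shift forced by this node base point. The key claim---to be established as an endpoint variant of Claim~\ref{nearly_consecutive_sequences}---is that an endpoint base point on a component forces at least as large a shift on that component (of order $s-d+r$) as an interior base point does. Both kinds of base point reduce the effective degree of the aspect by $1$, and refinedness of the included series (Lemma~\ref{refinedness_of_included_series}) pins down the resulting vanishing-order sum at the two endpoints. The endpoint variant is proved by running through the combinatorial analysis of nearly-consecutive sequences as in Claim~\ref{nearly_consecutive_sequences}, with the constraint ``the smallest selected vanishing index at the endpoint base-point is $\geq 1$'' in place of the interior base-point-count constraint. Granting this, the node base point at $\nu$ contributes at least $s-d+r$ shifts on each of $E_j$ and $E_{j+1}$, while each of the remaining $d-1$ base points contributes at least $s-d+r$ shifts per Remark~\ref{basic_heuristic}. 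The cumulative shift is therefore bounded below by
\[
2(s-d+r) + (d-1)(s-d+r) = (d+1)(s-d+r),
\]
strictly exceeding the upper bound $d(s-d+r) = (s-d+r+1)(d-r)$ valid when $\mu = 0$. This contradiction establishes item $(1)$ of Definition~\ref{type_A_inclusion}; item $(2)$ then follows from Proposition~\ref{simplicity_of_base_points}.

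The main obstacle is establishing the endpoint variant of Claim~\ref{nearly_consecutive_sequences}: one must track carefully how the nearly-consecutive structure of the ambient's vanishing sequences (Lemma~\ref{nearly_consecutive_lemma}) propagates the endpoint constraint into $s-d+r$ forced index shifts across the component. The argument is cleanest on an interior component $1 < j < g$; the extremal cases $j = 1$ and $j = g$, in which the node $\nu$ borders the boundary of the chain, demand a direct check but are strictly easier, as is already the pattern in the proof of Proposition~\ref{simplicity_of_base_points}.
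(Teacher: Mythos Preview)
Your approach is genuinely different from the paper's, and it contains a real gap.

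The paper does \emph{not} argue directly on $X$. Instead it passes to a curve $\widetilde{X}$ obtained from $X$ by blowing up the nodes, so that a putative nodal base point becomes an \emph{interior} base point on a rational $\mathbb{P}^1$-component. The key combinatorial input is then Claim~\ref{nearly_consecutive_sequences_bis}, which differs from Claim~\ref{nearly_consecutive_sequences} precisely in that on a genus-zero component the ambient's vanishing-order sums are exactly $m$ (the $m^{\circ\circ}$-complement) rather than $m-1$ with one exception (the $m^{\circ}$-complement). This extra unit of tightness is what makes even a \emph{single} base point on the rational bridge force $s^*+1$ shifts, breaking the budget.

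Your endpoint-variant claim fails for exactly the reason the paper needs this blow-up. On an interior elliptic component $E_j$, the ambient aspect's vanishing sequence $(b_0,\dots,b_s)$ at $q_j$ is a nearly-consecutive sequence that typically begins at $b_0\geq 1$ (indeed $b_0^{(j)}+a_0^{(j+1)}=m-s-1$, and for most $j$ both are positive). In that case \emph{every} sub-series of the ambient already vanishes at $q_j$, so the endpoint base-point condition on $E_j$ is vacuous and forces zero shifts---not $s-d+r$ of them. The same can happen simultaneously on $E_{j+1}$ at $p_{j+1}$. So your inequality $2(s-d+r)+(d-1)(s-d+r)>d(s-d+r)$ does not follow. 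Two subsidiary issues compound this: you invoke Lemma~\ref{refinedness_of_included_series}, which is stated only for \emph{good} inclusions (so citing it here is circular), and you assume the remaining $d-1$ base points each force at least $s-d+r$ shifts by appealing to Remark~\ref{basic_heuristic}, but that remark is a heuristic expectation, not a proven lower bound for a single base point.
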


\begin{proof}
We will show that no inclusions \eqref{basic_inclusion} exist for which the ambient $g^s_m$ is of combinatorial type $\ubr{(12 \cdots (s+1))}_{\text{u times}}$ and some base point $p_i$ of the included series $g^{s-d+r}_m$ is supported at a node of $X$. For this it suffices to show that no inclusions \eqref{basic_inclusion} exist along a curve $\wt{X}$ obtained from $X$ via blow-ups in the nodes for which the ambient $g^s_m$ is of combinatorial type $\ubr{(12 \cdots (s+1))}_{\text{u times}}$ and some $p_i$ belongs to the {\it interior} of a rational component of $\wt{X}$. Just as in the proof of Proposition~\ref{simplicity_of_base_points}, it suffices to show that the placement of $M$ putative base points (with multiplicities) along a rational component imposes at least $M(s-d+r)+1$ shifts among indices of the included $g^{s-d+r}_m$. And this, in turn, may be distilled to a claim about nearly-consecutive sequences, as follows.

\begin{claim}\label{nearly_consecutive_sequences_bis}
Fix a choice of positive integers $s<m$ and let $i_0$ and $M$ be positive integers for which $M \geq 2$ and $2 \leq i_0 \leq s$. Let ${\rm L}(i_0,s,m)$ denote the unique increasing nearly-consecutive sequence of nonnegative integers of length $s+1$ beginning in 0 and ending in $s$ with distinguished index $i_0$, and let ${\rm L}^{\circ \circ}={\rm L}^{\circ \circ}(i_0,s,m)$ denote the {\it $m^{\circ \circ}$-complementary} sequence defined by ${\rm L}^{\circ \circ}_i=m-{\rm L}_i$ for all $i$. Further choose an increasing subsequence ${\rm L}^*$ of ${\rm L}$ with length $s^* \leq s$. Then every decreasing subsequence ${\rm Q}$ of ${\rm L}^{\circ \circ}$ for which
\[
{\rm L}^*+ {\rm Q} \leq (m-M,\dots,m-M)
\]
is obtained from the $m^{\circ \circ}$-complement $({\rm L}^*)^{\circ \circ}$ of ${\rm L}^*$ by shifting in at least $M s^*+1$ places.
\end{claim}

\begin{proof}[Proof of Claim~\ref{nearly_consecutive_sequences_bis}]
The proof follows the same lines as the proof of Claim~\ref{nearly_consecutive_sequences}. It is clear, first of all, that every element of ${\rm L}^*$ contributes {\it at least} $M-1$ shifts, so 
the issue is controlling the number of elements that contribute {\it exactly} $M-1$ of these. But in fact there can be at most $M-1$ of these, corresponding to elements of ${\rm L}$ that lie strictly less than $M$ places to the left of ${\rm L}_{i_0}$. Accordingly, we see that at least
$(M-1)(M-1)+M(s^*+1-(M-1))=Ms^*+1$ shifts are forced.
\end{proof}

Proposition~\ref{points_of_attachment_avoidance} follows immediately from Claim~\ref{nearly_consecutive_sequences_bis}.
\end{proof}

\subsection{The case $r=1$}
Assume that $X$ is a general chain of $g$ elliptic curves, that the ambient $g^s_m$ is of type $\ubr{(12 \cdots (s+1))}_{\text{u times}}$, and that $r=1$. Solving $\rho=\mu=0$ explicitly, we obtain the useful parameterization
\[
d=t+1, s=2t, g=(s+1)u, \text{ and } m=s(u+1)
\]
where $t$ and $u$ are positive integers. Rewritten in terms of $t$ and $u$, our basic inclusion \eqref{basic_inclusion} reads
\begin{equation}\label{basic_inclusion_r=1}
g^t_{2t(u+1)-(t+1)}+ p_1+ \cdots+ p_{t+1} \hra g^{2t}_{2t(u+1)}.
\end{equation}
Note that our re-usage of the $u$-variable here is consistent. We may characterize the set of good inclusions on $X$ as follows. 

\begin{thm}\label{type_A_inclusions} 
The set of inclusions \eqref{basic_inclusion_r=1} on $X$ is indexed by the elements of the set
\[
{\rm S}=\{(j_1,\dots,j_{t+1}): 1 \leq j_1 < \cdots < j_{t+1} \leq (2t+1)u \text{ and } j_k \not\equiv 2k-1 \hspace{3pt} (\text{mod }2t+1) \text{ for all } k=1,2,\dots,t+1\}.
\]
\end{thm}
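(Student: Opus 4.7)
By Propositions~\ref{simplicity_of_base_points} and \ref{points_of_attachment_avoidance}, every inclusion of the form \eqref{basic_inclusion_r=1} on $X$ is good, so the $t+1$ base points lie in the interiors of distinct elliptic components; and by Lemma~\ref{uniquely_prescribed_base_pt} the location of each base point on its component is uniquely prescribed by the data of the inclusion. Consequently any such inclusion determines, and is determined by, a tuple $(j_1,\ldots,j_{t+1})$ with $1\leq j_1<\cdots<j_{t+1}\leq (2t+1)u$ recording the components that carry the base points. It remains to characterize which tuples arise.

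To do so I would track the vanishing sequence of the included aspect $g^{t}_{2t(u+1)-(t+1)}$ at the first marked point of each component $E_j$ as a length-$(t+1)$ subsequence of the vanishing sequence of the ambient, the latter a nearly-consecutive sequence of length $2t+1$ with distinguished index $(j-1)\bmod(2t+1)$ by Lemma~\ref{nearly_consecutive_lemma}. Refinedness (Lemma~\ref{refinedness_of_included_series}) together with monotonicity forces this subsequence to remain constant across base-point-free components and to shift on each component $E_{j_k}$. Since $\mu=0$ the total shift along the chain equals $(t+1)t$, and by Remark~\ref{basic_heuristic} each base-point component contributes exactly $t$ shifts; because the vanishing on a base-point component is maximal, an induction on $k$ shows that the $k$-th base point must advance the $k$-th smallest selected index past the distinguished index of the ambient on $E_{j_k}$.

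The compatibility of such an advancement with the nearly-consecutive structure of the ambient, analyzed via the same sort of sequence estimates that underlie the proofs of Claims~\ref{nearly_consecutive_sequences} and \ref{nearly_consecutive_sequences_bis}, translates precisely into the condition that the distinguished index of the ambient on $E_{j_k}$ differs from $2k-2$; equivalently, $j_k-1\not\equiv 2k-2 \pmod{2t+1}$, i.e. $j_k\not\equiv 2k-1 \pmod{2t+1}$. This yields necessity. For sufficiency, given a tuple in $S$, I would reverse the recipe: construct the aspects of the included series component-by-component using the shift pattern just derived, pin down each $p_i$ via Lemma~\ref{uniquely_prescribed_base_pt}, and verify compatibility at each node through the vanishing-sum condition of Definition~\ref{EH-defn}. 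The hard part is the combinatorial bookkeeping in the middle paragraph: converting the heuristic shift budget into a sharp statement about which residues modulo $2t+1$ are admissible at each step, so that the congruence conditions defining $S$ fall out cleanly from the shift analysis rather than appearing as auxiliary side conditions.
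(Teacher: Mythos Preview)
Your framework is right: reducing to tuples via goodness (Propositions~\ref{simplicity_of_base_points}, \ref{points_of_attachment_avoidance}) and Lemma~\ref{uniquely_prescribed_base_pt}, then characterizing admissible tuples by tracking how the included aspect moves inside the ambient. But you have not actually carried out the characterization, and you say as much in your final sentence.

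The paper's proof is far shorter than what you outline because it isolates one clean mechanism you do not name: the \emph{aligned section}. Along each base-point component $E_{j_k}$ the included aspect has a unique index whose vanishing orders at $p$ and $q$ sum to $m$ (this is exactly the section used in Lemma~\ref{uniquely_prescribed_base_pt} to pin down $p_k$). The paper's key assertion is that this aligned index, read inside the ambient $(s+1)$-tuple, equals $(2k-1)\bmod(2t+1)$. This is proved by a one-line induction: when $r=1$ the aligned index jumps by exactly $2$ at each base-point component, and the base case $k=1$ is clear. Once you know the aligned index is $2k-1$, the congruence $j_k\not\equiv 2k-1\pmod{2t+1}$ drops out immediately, because the aligned section cannot coincide with the section of the ambient aspect whose vanishing already sums to $m$ (compare the argument in Lemma~\ref{local_obstruction_r=s-1}).

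Your shift-budget language is compatible with this but does not by itself produce the congruence: the total shift being $(t+1)t$ and each component contributing $t$ shifts tells you only that the budget is tight, not \emph{which} index is constrained on $E_{j_k}$. The phrase ``advance the $k$-th smallest selected index past the distinguished index'' is not the right invariant; the aligned-section index is. Replace your middle paragraph with the aligned-section induction and the proof becomes two sentences.
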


\begin{proof}
The result follows easily from the assertion that when $r=1$, {\it the index of the unique aligned section of the included aspect $g^{s-d+r}_m$ along the $j_k$th component (along which the $k$th base point $p_k$ lies, by definition) is $(2k-1)$ mod $2t+1$}.
To see this, note that when $r=1$, 
the index of the aligned section, viewed as a section of the ambient series, necessarily increases by precisely 2 along each component containing a base point of the included series. But because the key assertion is clear when $k=1$, by induction it also holds in the case of arbitrary $k$.
\end{proof}

\begin{rem}\label{good_inclusion_set}
We expect more generally that when $r=1$, $\rho=\mu=0$, and our ambient series is of word type ${\bf w}$, the set of {\it good} inclusions is indexed by
\[
{\rm S}({\bf w})=\{(j_1,\dots,j_{t+1}): 1 \leq j_1 < \cdots < j_{t+1} \leq (2t+1)u \text{ and } w_{j_k} \neq 2k-1, j=1,2,\dots,t+1\}.
\]
\end{rem}

\subsection{Graphical representation of the set of good inclusions}
A useful schematization of the set ${\rm S}$ in Remark~\ref{good_inclusion_set} when ${\bf w}= \ubr{(12 \cdots (s+1))}_{\text{u times}}$ is as follows. Let ${\rm G}={\rm G}({\rm S})$ denote a $d \times g$ (i.e. $(t+1) \times (2t+1)u$) grid of (vertices labeled by) positive integers between 1 and $s+1$, whose $j$th row corresponds to the (placement of) the $j$th base point, and in which each row the (same) word ${\bf w}= \ubr{(12 \cdots (s+1))}_{\text{u times}}$ is written. Now trace edges through every pair of vertices in distinct and adjacent rows.
The cardinality of ${\rm S}$ is then equal to a number of paths through the resulting graph (which we continue to label by) ${\rm G}$, subject to a natural positivity condition, as follows. 

\begin{defn}\label{traversal}
A {\it traversal} $\mc{T}$ of ${\rm G}$ is a connected subgraph of ${\rm G}$ that passes through precisely one vertex in each row. We make $\mc{T}$ into a directed graph by orienting each edge in the ``downward" direction. A traversal $\mc{T}$ is {\it positive} if, moreover, each of its edge is obtained by a movement that is down (by a single unit) and to the right (by a nonzero number of units) within the grid.
\end{defn}

\begin{rem}
The cardinality of $\#{\rm S}$ is equal to the number of positive traversals of ${\rm G}$.
\end{rem}

\begin{ex}
Say $s=d=2$. The corresponding grid ${\rm G}$ (minus the edges linking vertices in the first and second rows) is given by
{\small
\[
\begin{matrix}
\ast & 2 & 3 & \ast & 2 & 3 & \cdots & \ast & 2 & 3 \\
1 & 2 & \ast & 1 & 2 & \ast & \cdots & 1 & 2 & \ast
\end{matrix}
\]
}
in which asterisks denote positions that are prohibited by our positivity condition. 
Each traversal may be thought of as a length-2 word, namely either $2 1$, $2 2$, $3 1$, or $3 2$. It is clear, furthermore, that the number of traversals is independent of the length-2 word chosen. So it suffices to compute the number ${\rm N}(2,u-1)$ of positive traversals associated with a {\it fixed} choice of length-2 word. The quantity ${\rm N}(2,u-1)$, in turn, computes the number of {\it top-to-bottom paths} in a directed graph $\Ga(2,u-1)$, which is a special case of a more general construction that we present next. It follows easily that the number of positive traversals of ${\rm G}$ is $4 \binom{u}{2}$, which is the predicted value.
\end{ex}

\begin{const} Given positive integers $d$ and $e$, we construct a graph $\Ga(d,e)$ as follows.
\begin{itemize}
\item[(i)] The vertices of $\Ga(d,e)$ are the vertices of a $d \times e$ grid with integer-valued coordinates $(i,j)$, where $1 \leq i \leq d$ and $1 \leq j \leq e$. Each vertex is labeled by its row, i.e., by its first coordinate.
\item[(ii)] Edges in $\Ga(d,e)$ link only vertices in distinct and adjacent rows, and an edge links $(i,j)$ with $(i+1,k)$ if and only if $k \geq j$.
\item[(iii)] We orient $\Ga(d,e)$ according to the convention that smaller labels point towards larger labels, i.e. each edge is oriented ``from top to bottom".
\end{itemize}
\end{const}

\noindent For example, $\Ga(2,4)$ is the following graph (the downwards orientation of every edge is omitted):

{\hspace{90pt}
\begin{tikzpicture}[scale=.5]
\node (n1) at (1,10) {1};
\node (n2) at (3,10) {1};
\node (n3) at (5,10) {1};
\node (n4) at (7,10) {1};

\node (n5) at (1,8) {2};
\node (n6) at (3,8) {2};
\node (n7) at (5,8) {2};
\node (n8) at (7,8) {2};

\foreach \from/\to in {n1/n5, n1/n6, n1/n7, n1/n8, n2/n6, n2/n7, n2/n8, n3/n7, n3/n8, n4/n8} \draw (\from) -- (\to);
\end{tikzpicture}
}

\begin{defn}\label{top-to-bottom path}
A {\it top-to-bottom} path in $\Ga(d,e)$ is a connected subgraph of $\Ga(d,e)$ that passes through precisely one vertex in each row, which we orient downwards along each edge, i.e. from top to bottom.
\end{defn}

\begin{notation}
Let $N(d,e)$ denote the number of top-to-bottom paths in $\Ga(d,e)$.
\end{notation}

\begin{rem}
Clearly, $s^d$ is the number of possible $d$-tuples $(\al_1,\dots,\al_d)$ that label possible words corresponding to traversals of the $d \times s$ grid ${\rm G}$ introduced at the beginning of this subsection. The particularity of the $s=d=2$ case is that the number of possible positive traversals is {\it independent} of the choice of $(\al_1,\al_2) \in \{(2,1),(2,2),(3,1),(3,2)\}$, but this is not true in general.
On the other hand, it is easy to see that the number of top-to-bottom paths in the smaller graph $\Ga(d,e)$ is given by
\begin{equation}\label{N_d,e}
{\rm N}(d,e)= \binom{d+e-1}{d}.
\end{equation}
Indeed, \eqref{N_d,e} follows from the facts that the path counts $N(d,e)$ satisfy the binomial-type recursion
\[
{\rm N}(d,e)= {\rm N}(d-1,e)+ {\rm N}(d,e-1)
\]
whenever $d,e \geq 2$, and that
\[
{\rm N}(1,e)=e \text{ and } {\rm N}(d,1)=1.
\]
\end{rem}
The essential question, then, is understanding how positive traversals of ${\rm G}$ are stratified according to their associated graphs $\Ga(d,e)$, i.e., which values $e$ are possible. To do so, we introduce an auxiliary graph directed $\Ga^s(d)$. Here $\Ga^s(d)$ has integer coordinate-valued vertices $(i,j)$ where $1 \leq i \leq d$ and $i \leq j \leq s+i-1$; as usual, we equip $\Ga^s(d)$ with the orientation that has all edges pointing downward. Vertices with coordinates $(i,j)$ are labeled by $1+(i+j-1) \text{ mod }(s+1)$. Among the set of all top-to-bottom paths in $\Ga^s(d)$, starting from a vertex in the first (top) row and ending in a vertex in the bottom ($d$th) row, those that involve precisely $k$ {\it backwards} edges (i.e., downward and to the left) are indexed by $d$-tuples with associated graphs $\Ga(d,u-1-k)$. In general, we have $0 \leq k \leq d-2$. Putting all of this together, we conclude the following. 

\begin{thm}\label{good_inclusions_r=1} When $\rho=\mu=0$ and $r=1$, the number of inclusions \eqref{basic_inclusion_r=1} for a fixed ambient $g^{2t}_{2t(u+1)}$ of word type $\ubr{(12 \cdots (s+1))}_{\text{u times}}$ on $X$ is given by
\[
{\rm N}_1(t,u)= \sum_{k=0}^{d-2} {\rm N}_k^s(d) \binom{d+u-2-k}{d}
\]
where $d=t+1$, $s=2t$, and
\[
{\rm N}_k^s(d) := \#\{\text{top-to-bottom paths in }\Ga^s(d) \text{ with precisely }k \text{ backwards edges}\}.
\]
\end{thm}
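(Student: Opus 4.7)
The plan is to apply Theorem~\ref{type_A_inclusions} to reduce the problem to computing $|S|$, and then stratify $S$ by the \emph{word} $(\alpha_1, \dots, \alpha_d) := (w_{j_1}, \dots, w_{j_d})$ attached to $(j_1, \dots, j_d) \in S$, where $w_j := 1 + ((j-1) \bmod (s+1))$ denotes the label of the $j$th letter. Since $j_k \not\equiv 2k-1 \pmod{s+1}$, each label $\alpha_k$ lies in the $s$-element allowed set of row $k$ of $\Gamma^s(d)$, and conversely every such word occurs; hence admissible words are in bijection with top-to-bottom paths in $\Gamma^s(d)$.

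Next I will count the tuples in $S$ giving a fixed word. Writing $j_k = \alpha_k + (s+1) q_k$ with $q_k \in \{0, \dots, u-1\}$, the condition $j_k < j_{k+1}$ becomes $q_{k+1} \geq q_k$ when $\alpha_{k+1} > \alpha_k$ and $q_{k+1} > q_k$ (a \emph{reset}) when $\alpha_{k+1} \leq \alpha_k$. Letting $\kappa$ denote the number of resets and substituting $\tilde q_k := q_k - \#\{\text{resets before position } k\}$ turns the inequalities into $0 \leq \tilde q_1 \leq \cdots \leq \tilde q_d \leq u-1-\kappa$, which by stars-and-bars has $\binom{d+u-1-\kappa}{d}$ solutions.

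The heart of the argument is the identity $\kappa = k+1$, where $k$ is the number of backward edges in the $\Gamma^s(d)$-path of the word. Setting $l_i := j - i \in \{0, \dots, s-1\}$ for the column offset at row $i$ and $L_i := \alpha_i - 2i$, the label formula on $\Gamma^s(d)$ gives $l_i \equiv L_i \pmod{s+1}$, and I write $l_i = L_i + (s+1) m_i$. A short case analysis on each transition $(l_i, \alpha_i) \to (l_{i+1}, \alpha_{i+1})$ shows that forward edges ($l_{i+1} \geq l_i - 1$) contribute a reset precisely when the integer representative of $\alpha_{i+1}$ requires a downward wrap ($w_i^- = 1$), while backward edges ($l_{i+1} \leq l_i - 2$) contribute a reset precisely when no upward wrap is needed ($w_i^+ = 0$); summing gives $\kappa = k + (W^- - W^+)$. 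Telescoping the relation $\alpha_{i+1} - \alpha_i = (2 + \delta l_i) - (s+1)(w_i^- - w_i^+)$ produces $W^- - W^+ = m_d - m_1$, and the extremal constraints $\alpha_1 \neq 1$ and $\alpha_d \neq s+1$ (combined with $s = 2d-2$) force $m_1 = 0$ and $m_d = 1$.

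Substituting $\kappa = k+1$ shows that each word whose $\Gamma^s(d)$-path carries $k$ backward edges contributes $\binom{d+u-2-k}{d} = {\rm N}(d, u-1-k)$ elements of $S$; grouping words by their backward-edge count delivers the stated formula. Note that admissible words automatically satisfy $1 \leq \kappa \leq d-1$ (a strictly increasing word would force $\alpha_d \geq 2d = s+2$), so $k$ ranges over $\{0, \dots, d-2\}$ exactly as in the sum. The main obstacle is the identity $\kappa = k+1$: it requires tracking two independent cyclic phenomena (the $\bmod(s+1)$ wrap of labels and the column-shift between rows of $\Gamma^s(d)$) and hinges crucially on the fact that the forbidden labels in the first and last rows sit at the extreme ends of $\{1, \dots, s+1\}$.
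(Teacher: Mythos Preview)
Your proof is correct and follows essentially the same route as the paper: stratify the index set $S$ of Theorem~\ref{type_A_inclusions} by the word $(\alpha_1,\dots,\alpha_d)$, identify admissible words with top-to-bottom paths in $\Gamma^s(d)$, and count tuples word-by-word via a stars-and-bars argument equivalent to the paper's $N(d,e)=\binom{d+e-1}{d}$. In fact you supply a step the paper only asserts: the paper simply states that paths with $k$ backwards edges correspond to ``$d$-tuples with associated graphs $\Gamma(d,u-1-k)$'', whereas your telescoping argument (using $m_d-m_1=1$ forced by the extremal prohibitions $\alpha_1\neq 1$, $\alpha_d\neq s+1$) actually proves the identity $\kappa=k+1$; your parenthetical induction showing that a strictly increasing admissible word would force $\alpha_d\geq 2d$ is likewise a correct consistency check.
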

\begin{rem}
As top-to-bottom paths in $\Ga^s(d)$ are in bijection with elements of the set of $d$-tuples
\[
{\rm W}^s(d):= \{(x_1,\dots,x_d): j \leq \sum_{i=1}^j x_i \leq s+j-1 \text{ for all } j=1. \dots, d\}
\]
it follows that ${\rm N}_k^s(d)$ is equal to the number of elements of $W^s(d)$ with precisely $k$ negative entries.
\end{rem}
In general, it is unclear if closed formulas for the coefficients ${\rm N}_k^s(d)$ exist. However, they are relatively straightforward to compute, as we will see shortly.

\begin{ex} Say $s=4$ and $d=3$. In this case, the associated diagram $\Ga^4(3)$ is as follows:

\hspace{90pt}
\begin{tikzpicture}[scale=.4]
\node (n1) at (1,10) {2};
\node (n2) at (3,10) {3};
\node (n3) at (5,10) {4};
\node (n4) at (7,10) {5};

\node (n5) at (3,9) {4};
\node (n6) at (5,9) {5};
\node (n7) at (7,9) {1};
\node (n8) at (9,9) {2};

\node (n9) at (5,8) {1};
\node (n10) at (7,8) {2};
\node (n11) at (9,8) {3};
\node (n12) at (11,8) {4};

\draw (n3)--(n5);
\draw (n5)--(n9);
\end{tikzpicture}

Top-to-bottom paths in the diagram with backwards edges linking vertices in the first and second rows correspond to the words $44m$, $54m$, and $55m$, where $1 \leq m \leq 4$; the traversal corresponding to $441$ is drawn. There are 12 such top-to-bottom paths; by symmetry, it follows there are 24 top-to-bottom paths containing at least one backwards edge in the diagram. Since there are $4^3=64$ top-to-bottom paths in total, there are 40 top-to-bottom paths without backwards edges. It follows that the total number of inclusions \eqref{basic_inclusion_r=1} in this case is $40 \binom{u+1}{3}+ 24 \binom{u}{3}$.
\end{ex}

\subsection{Top-to-bottom paths with prescribed numbers of backwards edges}
By working a bit harder, we can write explicit formulas for the number ${\rm N}^{2d-2}_j(d)$ of top-to-bottom paths in a $d \times (2d-2)$ grid $\Ga^{2d-2}(d)$ as above with precisely $k$ backwards edges. 

To this end, we codify each top-to-bottom path as a binary string on $(d-1)$ bits, in which instances of 1 correspond to backwards edges. These form a poset $\mc{P}$ with $(d-1)$ levels, in which level $k$ consists of those vertices whose binary strings involve precisely $k$ instances of 1; there are $(d-1)$ levels because the minimal (resp., maximal) possible number of backwards edges in a top-to-bottom path is clearly 0 (resp., $(d-2)$).

The number of top-to-bottom paths with exactly the maximal number $k=d-2$ of backwards edges is easy to compute. Indeed, each such top-to-bottom path is classified by its associated binary string, which in turn describes which rows in the grid are associated with backwards edges. It is not hard to see in this case that the set of rows involving backwards edges is partitioned into at most two connected components, and that each component involving $j$ rows contributes a factor of ${\rm N}(j,2d-2j)$. It follows that
\begin{equation}\label{sum_k=d-2}
{\rm N}^{2d-2}_{d-2}(d)= \sum_{j=1}^{d-1} {\rm N}(j,2d-2j) \cdot N(d-j,2d-2(d-j))
= \sum_{j=1}^{d-1}\binom{2d-j-1}{j} \binom{d+j-1}{d-j}.
\end{equation}

Each of the products in \eqref{sum_k=d-2} represents the contribution of a binary string in $\mc{P}$. More generally, each vertex in $\mc{P}$ indexed by a string ${\bf w}$ has a natural {\it multiplicity} given by the number of top-to-bottom paths of a $d \times (2d-2)$ grid involving {\it at least} those backwards edges that are specified by ${\bf w}$; we may then use a process of inclusion-exclusion to determine the {\it exact} number of top-to-bottom paths involving (only) those backwards edges that are specified by ${\bf w}$.

Moreover, the multiplicity of a given string at level $k$ in $\mc{P}$ depends only on the underlying (unordered) partition $\la$ of $k$ to which it is associated; we denote this multiplicity by $m_d(\la)$. Explicitly, letting $\ell$ denote the length of $\la=(\la_1,\dots,\la_{\ell})$ (i.e. the total number of nonzero parts, which are allowed to be nondistinct) we have
\[
m_d(\la)= \prod_{i=1}^{\ell} {\rm N}(\la_i+1,2d-2\la_i) \cdot {\rm N}(1,2d-2)^{d-\ell-|\la|}
\]
where by convention ${\rm N}(1,2d-2)^{d-\ell-\|\la\|}=0$ whenever the exponent $d-\ell-|\la|$ is negative.

In order to compute ${\rm N}^{2d-2}_j(d)$, another crucial combinatorial ingredient is the number $c_d(\la)$ of binary strings on $(d-1)$ bits associated to a given partition $\la$. Before deriving the general formula for $c_d(\la)$, we consider a typical situation in which all features of the general case are already present. Namely, say $\la=(1^j)$. We then want to count strings of type
\[
\ast 10 \ast 10 \ast \cdots \ast 10 \ast 1 \ast
\]
in which there are $d-2j$ asterisks that may either be empty or occupied by zeros. Accordingly, we see that $c_d((1^j))$ is equal to the coefficient of $x^{d-2j}$ in $(1+x+\dots+x^{d-2j})^{d-2j}$, from which it follows easily that
\[
c_d((1^j))= [x^{d-2j}](((1-x)^{-1})^{d-2j})= [x^{d-2j}](1-x)^{2j-d}= \bigg|\binom{2j-d}{d-2j}\bigg|.
\]
In the more general situation, write $\la=(\la_1^{e_1},\dots,\la_m^{e_m})$ where $\sum_{i=1}^m e_i=\ell$. We then have, accounting for reorderings of the (a priori unordered) partition $\la$:
\[
c_d(\la)= \binom{\ell}{e_1,\dots,e_m} [x^{d-|\la|-\ell}](1-x)^{\ell+|\la|-d}= \binom{\ell}{e_1,\dots,e_m} \cdot \bigg|\binom{\ell+|\la|-d}{d-|\la|-\ell}\bigg|.
\]

\noindent Now set
\begin{equation}\label{at_least_j}
\begin{split}
{\rm N}^{2d-2}_{j,+}(d)&:= \sum_{\la: |\la|=j} \binom{\ell}{e_1,\dots,e_m} \cdot \bigg| \binom{\ell+|\la|-d}{d-|\la|-\ell} \bigg| \cdot \prod_{i=1}^{\ell} {\rm N}(\la_i+1,2d-2\la_i) \cdot {\rm N}(1,2d-2)^{d-\ell-|\la|} \\
&= \sum_{\la: |\la|=j} \binom{\ell}{e_1,\dots,e_m} \cdot \bigg| \binom{\ell+|\la|-d}{d-|\la|-\ell} \bigg| \cdot (2d-2)^{d-\ell-|\la|} \cdot \prod_{i=1}^{\ell} \binom{2d- \la_i+1}{\la_i+1}.
\end{split}
\end{equation}
It follows from the discussion above that the number of top-to-bottom paths in $\Ga^{2d-2}(d)$ involving exactly $j$ backwards edges is given by
\begin{equation}\label{exactly_j}
{\rm N}^{2d-2}_{j}(d)= \sum_{j \leq k \leq d-2} (-1)^{k-j} \ga_j^k(d) \cdot {\rm N}^{2d-2}_{k, +}(d)
\end{equation}
where the coefficients $\ga_j^k(d)$ are positive integers specified by inclusion-exclusion carried out over $\mc{P}$. Explicitly, we have $\ga_j^j(d)=1$, and whenever $k>j$, the value of $\ga_j^k(d)$ is prescribed by the requirement that
\begin{equation}\label{gamma_equation}
\sum_{j \leq k \leq d-2} (-1)^{k-j} \binom{k}{j}\ga_j^k(d)=0.
\end{equation}

\subsection{Comparison with Macdonald's formula}
An inclusion \eqref{basic_inclusion_r=1} of linear series on a {\it smooth} curve $C$ codifies a {\it $d$-secant $(d-2)$-plane} to the image of $C$ in $\mb{P}^{2d-2}$. 
The (virtual) number $N_d=N_d(g,m)$ of these is computable via a classical formula of Macdonald's. 
\begin{prop}[\cite{ACGH}, Proposition VIII.4.2]
The virtual number of included $\g^{s-d+r}_{m-d}$ inside a fixed $\g^s_m$ on a general genus-$g$ curve is 
\begin{equation}\label{macdonald_formula}
\begin{split}
N(g,s,m,d,r)&=\frac{(-1)^{r \choose 2}}{r!}\prod_{i=1}^r\Big[(1-t_i)^{g+s-m}\Big(1+\sum_i t_i \Big)^g\Delta(t)^2 \Big]_{(t_1t_2...t_r)^{s-d+2r}} \\
&=\frac{(-1)^{r \choose 2}}{r!} \Bigg[\prod_{i=1}^{s-d+r+1} (1+t_i)^{m-g-s} \Big(1+\sum_i t_i \Big)^g \Delta(t)^2 \Bigg]_{(t_1t_2...t_{s-d+r+1})^{s-d+2r}}
\end{split}
\end{equation}
where $\Delta(t)=\prod_{i>j}(t_i-t_j)$ and $[F(t_1,...,t_n)]_{t^{\vec{a}}}$ denotes the coefficient of $t^{\vec{a}}$ in the multi-variable polynomial $F(t_1,...,t_n)$. 
\end{prop}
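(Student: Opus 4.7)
The plan is to realize $N(g,s,m,d,r)$ as the degree of a Thom--Porteous degeneracy locus on the symmetric product $C_d$. Fix a smooth genus-$g$ curve $C$ and a linear series $(L,V)\in G^s_m(C)$. For each effective divisor $D\in C_d$, the evaluation map $\varepsilon_D\colon V\to H^0(L|_D)$ has kernel $V\cap H^0(C,L(-D))$, and a $d$-secant $(d-r-1)$-plane to $\phi_V(C)\subset \mb{P}V^{*}$ is precisely a $D$ along which $\varepsilon_D$ has rank at most $d-r$. Globalizing produces a bundle map $\varepsilon\colon V\ot\mO_{C_d}\to \mE$, where $\mE$ is the rank-$d$ tautological evaluation bundle whose fiber over $D$ is $H^0(L|_D)$. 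Since $\mu=0$, the $(d-r)$-th degeneracy locus of $\varepsilon$ has expected dimension zero, and its virtual degree is exactly $N(g,s,m,d,r)$.

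The second step is to apply the Thom--Porteous formula, which expresses the class of this degeneracy locus as a Schur polynomial $\Delta_{(r^{s-d+r+1})}(c(\mE-V))$ indexed by the $r\times(s-d+r+1)$ rectangle. Because $V$ is a trivial bundle of rank $s+1$, its total Chern class is $1$ and this reduces to a Schur polynomial in $c(\mE)$ alone. Using Jacobi--Trudi and symmetrization over $r$ auxiliary variables $t_1,\dots,t_r$---or dually over $s-d+r+1$ variables, the conjugate partition yielding the second line of \eqref{macdonald_formula}---one rewrites this Schur determinant as an iterated coefficient extraction of a product of Chern polynomials against the squared Vandermonde $\Delta(t)^2$, with the prefactor $(-1)^{\binom{r}{2}}/r!$ arising from the antisymmetrization.

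The final step is to compute $c_t(\mE)$ on $C_d$. Using the standard Poincar\'e-bundle computations of \cite[Ch.~VIII]{ACGH}, let $\eta$ denote the pullback of the point class from $C$ via the natural projection $C\times C_{d-1}\to C_d$, and $\theta$ the pullback of the theta class under an Abel--Jacobi morphism $C_d\to \Pic^d(C)$. The total Chern class of $\mE$ then has an explicit expression in $\eta$ and $\theta$ that, combined with the standard intersection numbers $\eta^{d-j}\theta^j=g!/(g-j)!$ on $C_d$, collapses the coefficient extraction of step two into exactly the expression \eqref{macdonald_formula}. The main obstacle is the combinatorial bookkeeping of how the $\eta$- and $\theta$-contributions interact inside the Schur determinant; this is most cleanly handled by first isolating the purely symmetric-function identity in the $t_i$ and substituting the Chern-class data only at the very end, as in the proof of \cite[Prop.~VIII.4.2]{ACGH}.
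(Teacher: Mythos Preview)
Your outline is correct and follows exactly the argument in \cite[Prop.~VIII.4.2]{ACGH}, which is the cited source; the present paper does not give its own proof of this proposition but merely quotes it from ACGH. There is nothing to compare, since the paper's ``proof'' is precisely the citation, and your sketch accurately recapitulates the Thom--Porteous computation on $C_d$ carried out there.
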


Note that in the statement of Proposition VIII.4.2 in {\it loc. cit.} there is a misprint in the second version of the formula, and the $s+d+2r$ appearing in the subscript should be replaced by $s-d+2r$. When $r=1$, the first version of Macdonald's formula specializes to the statement that 
\begin{equation}\label{macdonald_coefficients}
N_d(g,m)= \sum_{i=0}^d(-1)^i\binom{g+2d-2-m}{i} \binom{g}{d-i}.
\end{equation}

Of course, when $\rho=\mu=0$, the parameters $d$, $g$, and $m$ specialize to $d=t+1$, $g=(2t+1)u$, and $m=2t(u+1)$. Applying our smoothing theorem~\ref{thm:smoothing theorem} for good inclusions, together with Proposition~\ref{points_of_attachment_avoidance} (``all inclusions are good"), we immediately deduce the following result. 
\begin{thm}\label{equality_of_numbers}
Assume that $r=1$ and $\rho=\mu=0$, so that $d=t+1$, $g=(2t+1)u$, and $m=2t(u+1)$. The Macdonald numbers $N_d(g,m)$ of \eqref{macdonald_coefficients} agree with the numbers ${\rm N}_1(t,u)$ of Theorem~\ref{good_inclusions_r=1}, i.e. we have
\begin{equation}\label{combinatorial_equality}
\sum_{k=0}^{d-2} {\rm N}_k^{2d-2}(d) \binom{d+u-2-k}{d}=\sum_{i=0}^d(-1)^i\binom{u}{i}\binom{(2d-1)u}{d-i}.
\end{equation}
where the coefficients ${\rm N}_k^{2d-2}(d)$ are determined by equations~\ref{at_least_j} and \ref{exactly_j}.
\end{thm}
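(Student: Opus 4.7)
The plan is to show that both sides of the identity \eqref{combinatorial_equality} enumerate the same geometric object, namely the inclusions of a $g^{s-d+1}_{m-d}$ into a fixed ambient $g^s_m$ on a general smooth curve of genus $g=(2t+1)u$. The identity is then a formal consequence of combining the preceding theorems.

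First I would interpret the left-hand side. By Theorem~\ref{good_inclusions_r=1}, the sum $\sum_{k=0}^{d-2}{\rm N}^{2d-2}_k(d)\binom{d+u-2-k}{d}$ equals ${\rm N}_1(t,u)$, the number of good inclusions \eqref{basic_inclusion_r=1} on the chain $X=E_1\cup\cdots\cup E_g$ whose ambient $g^s_m$ is of word type $\underbrace{(12\cdots(s+1))}_{u\text{ times}}$. Proposition~\ref{points_of_attachment_avoidance} upgrades this to a count of \emph{all} inclusions with this ambient word type: no base point can be concentrated at a node, and by Proposition~\ref{simplicity_of_base_points} no two base points can share a component. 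Thus the LHS counts exactly the points of the special fiber $\mathcal G^\circ_0$ lying over the chosen limit $\mathfrak g^s_m$.

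Next I would lift to the generic fiber using the smoothing theorem. Theorem~\ref{thm:smoothing theorem} asserts that $\mathcal G^\circ \to B$ is universally open, flat, and reduced at every such good inclusion $x$, so each of the ${\rm N}_1(t,u)$ points on $X_0$ deforms to a unique point of $\mathcal G^\circ_\eta = G^{s,s-d+1}_{m,m-d}(X_\eta)$ corresponding to the $g^s_m$ on the smooth curve $X_\eta$ obtained by smoothing the chosen limit series. Conversely, properness of $\mathcal G^\circ$ (Theorem~\ref{thm:moduli of inclusion of limit linear series}, applied after passing to the locus with base points disjoint from the nodes, which is forced by Proposition~\ref{points_of_attachment_avoidance}) guarantees that every inclusion on $X_\eta$ with this ambient specializes to a good inclusion on $X_0$. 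Hence ${\rm N}_1(t,u)$ equals the actual scheme-theoretic count of inclusions $g^{s-d+1}_{m-d}\hookrightarrow g^s_m$ on $X_\eta$ for our fixed ambient.

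To identify the right-hand side with the same count, I would invoke Macdonald's formula \eqref{macdonald_formula} specialized to $r=1$ as recorded in \eqref{macdonald_coefficients}. On a general curve, the virtual class computed by Macdonald represents the top Chern class of a morphism of bundles over $C_d$ associated to a fixed $g^s_m$; the assumption $\mu=0$ ensures this locus has expected dimension zero, and reducedness at each point of $\mathcal G^\circ_\eta$ (which follows from the reducedness statement in Theorem~\ref{thm:smoothing theorem} together with flatness) implies that the virtual count coincides with the honest cardinality. Since this cardinality equals ${\rm N}_1(t,u)$ by the previous paragraph, the identity \eqref{combinatorial_equality} follows.

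The main obstacle I anticipate is the last transition: certifying that Macdonald's virtual number genuinely computes the scheme-theoretic count for the particular ambient $g^s_m$ obtained by smoothing a limit series of word type $\underbrace{(12\cdots(s+1))}_{u\text{ times}}$. Equivalently, one must rule out excess intersection or nonreduced structure on $\mathcal G^\circ_\eta$. This is precisely what the flatness and fiberwise reducedness of $\mathcal G^\circ$ near our chosen $x$ (Theorem~\ref{thm:smoothing theorem}) deliver, so the burden reduces to verifying that every $g^s_m$ on the general curve is a deformation of some limit $\mathfrak g^s_{w_0}$, which follows from properness of $G^s_m(X/B)$ and $\rho=0$.
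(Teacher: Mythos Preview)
Your proposal is correct and follows essentially the same approach as the paper: the paper's proof is the single sentence immediately preceding the theorem statement, which invokes the smoothing theorem~\ref{thm:smoothing theorem} together with Proposition~\ref{points_of_attachment_avoidance} to identify ${\rm N}_1(t,u)$ with Macdonald's count on the generic fiber. Your version is more explicit about the two directions of the bijection (lifting via flatness/reducedness, and specializing via properness) and about why the virtual count equals the honest count, but the skeleton is identical.
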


\begin{rem}
From Theorem~\ref{equality_of_numbers} it follows that Macdonald's formula, together with Theorem~\ref{combinatorial_equality}, determines the coefficients ${\rm N}_j^{2d-2}(d)$ {\it uniquely}. To see why this is interesting, note that in \cite{Co1} we computed the exponential generating function for Macdonald's secant plane numbers via intersection theory on the $d$th Cartesian product of a smooth curve. In that approach, the class formula is modeled on the combinatorics of the complete graph on $d$ vertices. So Theorem~\ref{equality_of_numbers} gives a bridge between the combinatorics of complete graphs and that of the graphs $\Ga(d,e)$.
\end{rem}

\subsection{Calculation of secant plane numbers ${\rm N}_1(t,u)$ for small values of $d$}\label{secant_plane_numbers_r=1}
Here we compute the coefficients ${\rm N}_k^{2d-2}(d)$ for all $2 \leq d \leq 6$, $k=0,\dots,d-2$ by explicitly applying the formulas of the preceding subsection.
\begin{itemize}
    \item {\bf $d=2$.} We have ${\rm N}_0^2(2)=4$, so \eqref{combinatorial_equality} reduces to the statement that
    \[
    4\binom{u}{2}=2u^2-2u.
    \]
    \item {\bf $d=3$.} We have ${\rm N}_1^4(3)=24$ and ${\rm N}_0^4(3)= 4^3- {\rm N}_1^4(3)= 40$, so \eqref{combinatorial_equality} reduces to
    \[
    40 \binom{u+1}{3}+ 24 \binom{u}{3}= \fr{32}{3}u^3-12u^2+ \fr{4}{3}u.
    \]
    \item {\bf $d=4$.} Applying \eqref{sum_k=d-2}, we find ${\rm N}_2^6(4)=148$. We next compute ${\rm N}_1^6(4)$ by applying the inclusion-exclusion relation \eqref{exactly_j}, which yields
{\small
\[{\rm N}_1^6(4)= {\rm N}_{1,+}^6(4)- 2\cdot {\rm N}_{2,+}^6(4)= 3 \cdot {\rm N}(2,4) \cdot 6^2- 2 \cdot(2 \cdot {\rm N}(3,2) \cdot 6+ {\rm N}(2,4)^2)= 784.\]
   }
   It follows that ${\rm N}_0^6(4)=6^4-932=364$ and \eqref{combinatorial_equality} reduces to the statement that
    \[
    364 \binom{u+2}{4}+ 784 \binom{u+1}{4}+ 148 \binom{u}{4}= 54u^4-72u^3+20u^2-2u.
    \]
    \item $d=5$. We will check by explicit calculation that
    {\small
    \[
    \sum_{j=0}^3 {\rm N}_j^5(8) \binom{u+3-j}{5}= \fr{4096}{15}u^5- \fr{1280}{3}u^4+ \fr{556}{3}u^3- \fr{100}{3}u^2+ \fr{8}{5}u.
    \]
    }
    To do so, we first apply \eqref{sum_k=d-2}, obtaining
${\rm N}_3^5(8)= 920$. Next, applying \eqref{exactly_j}, we compute
    {\small
    \[
    \begin{split}
    {\rm N}_2^5(8)&= {\rm N}_{2,+}^5(8)- 3 \cdot {\rm N}_{3,+}^5(8)=
    3 \cdot {\rm N}(3,4) \cdot 8^2+ 3 \cdot {\rm N}(2,6)^2 \cdot 8- 3 \cdot {\rm N}_3^5(8)=11664 \text{ and}\\
    {\rm N}_1^5(8)&={\rm N}_{1,+}^5(8)-2 \cdot {\rm N}_{2,+}^5(8)+ 3 \cdot {\rm N}_{3,+}^5(8)=4 \cdot {\rm N}(2,6) \cdot 8^3- 2 (3 \cdot {\rm N}(3,4) \cdot 8^2+ 3 \cdot {\rm N}(2,6)^2 \cdot 8)+ 3 \cdot {\rm N}_3^5(8)=16920;
    \end{split}
    \]
    }
\vspace{-5pt}
it follows that ${\rm N}_0^5(8)= 8^5- ({\rm N}_1^5(8)+ {\rm N}_2^5(8)+{\rm N}_3^5(8))= 3264$.

\medskip
\item $d=6$. We will explicitly check that
    {\small
    \[
    \sum_{i=0}^4 {\rm N}_j^6(10) \binom{u+4-i}{6}= \fr{12500}{9}u^6- 2500u^5+ \fr{13100}{9}u^4- 386u^3+ \fr{392}{9}u^2-2u.
    \]
    }
Applying \eqref{sum_k=d-2} yields ${\rm N}_4^6(10)=5776$. Next via \eqref{exactly_j} we compute
{\small
\[
\begin{split}
{\rm N}_3^6(10) &= 3 \cdot {\rm N}_{3,+}^6(10)- 4 \cdot {\rm N}_{3,+}^6(10)=155012; \\ 
{\rm N}_2^6(10) &= 4 \cdot {\rm N}_{2,+}^6(10)- 3\cdot {\rm N}_{3,+}^6(10)+ 6 \cdot {\rm N}_{4,+}^6(10); \text{ and} \\
{\rm N}_1^6(10) &= 5 \cdot {\rm N}_{1,+}^6(10)- 2 \cdot {\rm N}_{2,+}^6(10)+ 3 \cdot {\rm N}_{3,+}^6(10)- 4 \cdot {\rm N}_{4,+}^6(10)=308044. 
\end{split}
\]
}
It follows that ${\rm N}_0^6(10)= 10^6-({\rm N}_1^6(10)+{\rm N}_2^6(10)+{\rm N}_3^6(10)+{\rm N}_4^6(10))=29260$.
\end{itemize}

\subsection{The case $r=s-1$}
In this case, solving $\rho=\mu=0$ yields $d=2r$, $s=r+1$, $g=(r+2)u$, and $m=(r+1)(u+1)$; rewritten in terms of $r$ and $u$, the basic inclusion \eqref{basic_inclusion} becomes
\begin{equation}\label{basic_inclusion_r=s-1}
g^1_{(r+1)(u+1)-2r}+ p_1+ \cdots+ p_{2r} \hra g^{r+1}_{(r+1)(u+1)}.
\end{equation}
The combinatorics in this regime is more complicated. Its distinguishing feature is that the shifting of the distinguished indices (associated to the included limit linear pencil) induced by the presence of base points is not deterministic as in the $r=1$ case; rather, shifting exhibits {\it branching}, in a way that is explicitly predicted by the Pl\"ucker poset of $\text{Gr}(s-d+r+1,s+1)=\text{Gr}(2,r+2)$. More precisely, the set of Pl\"ucker coordinates of $\text{Gr}(2,r+2)$ is indexed by pairs of numbers from the index set $[r+2]=\{1,\dots,r+2\}$. On the other hand, each component of our elliptic chain that contains a base point of the included linear series is associated to a {\it pair of pairs} $\pi_1,\pi_2 \in [r+2]^2$ which share a common edge in the Pl\"ucker poset. In particular, $\pi_1$ and $\pi_2$ have an element in common in $[r+2]$. The important point is now the following.

\begin{lem}\label{local_obstruction_r=s-1} Assume the ambient series $g^{r+1}_{(r+1)(u+1)}$ is of word type $\ubr{(12 \cdots (s+1))}_{\text{u times}}$, and that $(\pi_1,\pi_2)$ is an edge of the Pl\"ucker poset for $\text{Gr}(2,r+2)$, with $\nu= \pi_1 \cap \pi_2$. Every index $j=j(\pi_1,\pi_2)$ of a component of a general elliptic chain along which an inclusion \eqref{basic_inclusion_r=s-1} has a base point whose local evolution (i.e., shifting) is described by $(\pi_1,\pi_2)$ satisfies $j \not\equiv \nu \text{ (mod }(s+1))$.
\end{lem}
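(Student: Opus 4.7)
The plan is to argue by contradiction, in close analogy with the proof of Theorem~\ref{type_A_inclusions}. Suppose that $j \equiv \nu \pmod{s+1}$. Then the $j$-th letter of the ambient word is $w_j = \nu$, so by the proof of Lemma~\ref{nearly_consecutive_lemma} (which tracks the distinguished index of the nearly-consecutive vanishing sequences at marked points of $X$ in terms of $j \bmod (s+1)$), the ambient aspect on $E_j$ is ramified precisely at its $\nu$-th index. In the language of aligned pairs, this means the $\nu$-th ambient section on $E_j$ has vanishing orders at $p = p(j)$ and $q = q(j)$ which sum to the maximum allowed value $m$, whereas the other $s$ ambient indices yield vanishing pairs summing to $m-1$.

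First I would show that the aligned section $\sigma_j$ of the included pencil on $E_j$ (whose existence is established in the proof of Lemma~\ref{uniquely_prescribed_base_pt}, and which uniquely pins down the base point $p_i$) must correspond precisely to the $\nu$-th ambient section. This is the crux of the argument: since $\nu = \pi_1 \cap \pi_2$ is the common index of the Pl\"ucker shift, the pencil selects the same ambient vanishing order at the $\nu$-th index both at $p$ and at $q$. Together with the analysis of sum sequences in Lemma~\ref{refinedness_of_included_series}, this forces $\sigma_j$ to coincide (up to scalar) with the $\nu$-th ambient section, as it is precisely the single section whose vanishing orders at $p$ and $q$ are both obtained from the ambient's $\nu$-th vanishing pair.

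Combining the preceding two steps yields the desired contradiction: on the one hand, the aligned section of the included pencil on a component with a simple base point must satisfy $a_p(\sigma_j) + b_q(\sigma_j) = m-1$ by Lemma~\ref{refinedness_of_included_series}; on the other hand, because $\sigma_j$ is the $\nu$-th ambient section and the ambient series has no base point along $E_j$, we have $a_p(\sigma_j) + b_q(\sigma_j) = m$. This incompatibility forces $j \not\equiv \nu \pmod{s+1}$. I expect the main obstacle to be rigorously identifying $\sigma_j$ with the $\nu$-th ambient section, which will likely require an inductive argument along the chain analogous to the one that pins down the aligned index as $(2k-1) \bmod (2t+1)$ in the $r=1$ case, now tracking how the branching of the Pl\"ucker shift propagates through the sequence of base-point components.
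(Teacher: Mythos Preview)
Your proposal is correct and follows essentially the same approach as the paper: identify $\nu$ as the index of the unique aligned section of the included pencil on $E_j$, identify $j \bmod (s+1)$ as the distinguished index of the ambient aspect (the one whose vanishing orders sum to $m$), and derive a contradiction from the fact that the aligned section of the included pencil on a base-point component must have vanishing orders summing to $m-1$. The paper's proof compresses this into three sentences and leaves the $m$ versus $m{-}1$ incompatibility implicit in ``It follows immediately''; you have spelled it out.

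Your closing worry is unwarranted, however. The identification of $\sigma_j$ with the $\nu$-th ambient section does \emph{not} require an inductive argument along the chain. It is definitional: the pair $(\pi_1,\pi_2)$ records precisely which ambient indices are selected by the included pencil at $p$ and at $q$, and $\nu = \pi_1 \cap \pi_2$ is by construction the index common to both---that is what ``aligned'' means. The paper simply says ``The index $\nu$, by definition, describes the unique aligned section,'' and you should too; no propagation argument is needed here, in contrast with the $r=1$ situation where the aligned index itself had to be tracked inductively.
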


\begin{proof} The index $\nu$, by definition, describes the unique aligned section of the aspect of the included limit linear pencil along the $j$th component. On the other hand, because the ambient series $g^{r+1}_{(r+1)(u+1)}$ is of word type $\ubr{(12 \cdots (s+1))}_{\text{u times}}$, $j \mod (s+1)$ is the distinguished index of the ambient aspect $g^s_m$ associated with the unique section that vanishes to total order $m$ in the points of attachment of the $j$th component. It follows immediately that $j \not\equiv \nu \text{ (mod }(s+1))$.
\end{proof}

It is not hard to see, moreover, that $j$ being congruent to $\nu$ modulo $(s+1)$ is the {\it only} local numerical obstruction to the placement of a base point along the $j$th component of our elliptic chain. Accordingly, counting good inclusions in the $r=s-1$ case reduces to resolving a graphical enumeration problem analogous to the one described in the $r=1$ case for {\it each top-to-bottom traversal of the Pl\"ucker poset} $\mc{P}$, and then summing over all traversals of the Pl\"ucker poset in order to obtain the total number of inclusions \eqref{basic_inclusion_r=s-1}. 

\medskip
Explicitly, each traversal of $\mc{P}$ is indexed by a {\it prohibition sequence} $\La$ comprised of the $d$ forbidden base point indices in Lemma~\ref{local_obstruction_r=s-1}. To each fixed choice of $\La$ we associate a $d \times g$ grid ${\rm G}(\La)$ whose $j$th row is of the form $\ubr{(12 \cdots (s+1))}_{\text{u times}}$, but in which no entry $\La(j)$ may be traversed. From the prohibition grid ${\rm G}(\La)$ we next extract a diagram $\Ga^s(d;\La)$ whose traversals with prescribed numbers $k$ of backwards edges (nearly) stratify the space of $d$-tuples on the alphabet $[s]$ according to their associated $u$-binomial contributions. The total number of positive traversals of ${\rm G}(\La)$ is a sum of (multiples of) $u$-binomials $\binom{u+d-2-k}{2}$, $k=0, \dots, d-2$, plus a finite leftover term that we label ${\rm R}_{\La}$. Summing over all top-to-bottom traversals $\Ga$ of $\mc{P}$ yields the following result.

\begin{thm}\label{good_inclusions_r=s-1} When $\rho=\mu=0$ and $r=s-1$, the number of inclusions \eqref{basic_inclusion_r=s-1} for a fixed ambient $g^{r+1}_{(r+1)(u+1)}$ of combinatorial word type $\ubr{(12 \cdots (s+1))}_{\text{u times}}$ on $X$ is given by
\[
{\rm N}_{s-1}(t,u)= \sum_{k=0}^{d-2} {\rm N}_k^s(d) \binom{d+u-2-k}{d} + {\rm R}
\]
in which $d=2r$, $s=r+1$,
\[
{\rm N}_k^s(d) := \sum_{\La} \#\{\text{traversals of }\Ga^s(d; \La) \text{ with precisely }k \text{ backwards edges}\} \text{ and } {\rm R}:= \sum_{\La} {\rm R}_{\La}.
\]
Here $\La$ varies over all top-to-bottom traversals of the Pl\"ucker poset $\mc{P}$ of $\mbox{Gr}(2,s+1)$.
\end{thm}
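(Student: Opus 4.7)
The plan is to reduce the enumeration to two independent combinatorial problems and then compose them. First, by Proposition~\ref{points_of_attachment_avoidance} and Proposition~\ref{simplicity_of_base_points} every inclusion in this regime is good and its included pencil acquires exactly $d=2r$ simple base points, located in the interiors of $d$ distinct elliptic components. By Lemma~\ref{refinedness_of_included_series} the included pencil is refined, so its vanishing behavior at the nodes of each component is recorded by a pair of indices drawn from the vanishing sequence of the ambient $g^s_m$. Thus each good inclusion is determined by (i) the collection of $d$ components bearing base points and (ii) the record, for each such component, of how the pair of vanishing indices of the included aspect changes there.

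Next, I would interpret this record through the Pl\"ucker embedding $\mbox{Gr}(s-d+r+1,s+1)=\mbox{Gr}(2,r+2)$, whose Pl\"ucker coordinates are indexed by unordered pairs in $[r+2]$. Monotonicity along the chain (as in the proof of Lemma~\ref{nearly_consecutive_lemma}) shows that along a component without a base point the pair of indices is preserved, whereas along a component containing a base point the pair undergoes exactly one elementary transition $(\pi_1,\pi_2)$ with $\pi_1\cap\pi_2=\{\nu\}$, i.e.\ a single edge of the Pl\"ucker poset $\mc{P}$. The shift count bookkeeping of Remark~\ref{basic_heuristic} (adjusted to $r=s-1$) forces the $d$ transitions to accumulate into a full top-to-bottom traversal $\La$ of $\mc{P}$. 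Conversely, any traversal $\La$ is numerically admissible, and Lemma~\ref{uniquely_prescribed_base_pt} determines the base-point location on each component uniquely once the transition type is fixed.

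For each fixed traversal $\La$, I would then count the number of ways to distribute the $d$ base points among the $g=(r+2)u$ components. By Lemma~\ref{local_obstruction_r=s-1} the $k$th base point is forbidden from sitting on any component whose index is congruent to $\nu_k\pmod{s+1}$, where $\nu_k$ is the shared index of the $k$th edge of $\La$. This reproduces the grid-avoidance problem solved in the $r=1$ case but with the $k$th row forbidding the column $\La(k)$ in place of the column $2k-1$. Writing ${\rm G}(\La)$ for this prohibition grid and $\Ga^s(d;\La)$ for the corresponding auxiliary directed graph (built exactly as in the construction preceding Theorem~\ref{good_inclusions_r=1}), I would stratify positive traversals by the number $k$ of backwards edges, each contributing a term proportional to $\binom{d+u-2-k}{d}$, with a finite defect ${\rm R}_\La$ absorbing the edge effects at the top and bottom of the chain where the simple $u$-binomial count fails. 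Summing over all traversals $\La$ of $\mc{P}$ produces the asserted formula; a final appeal to Theorem~\ref{thm:smoothing theorem} lifts each good inclusion uniquely to the generic fiber and guarantees no double counting.

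The main obstacle is the second step: showing that the evolution of the vanishing pair along the chain corresponds \emph{bijectively} to edges of $\mc{P}$, and in particular that at a component with a base point precisely one Pl\"ucker pair is incremented (so that branching is faithfully modeled by $\mc{P}$) with no additional nonlocal numerical obstruction beyond Lemma~\ref{local_obstruction_r=s-1}. This requires analyzing the flag $V^2_v\subset V^1_v$ component by component, using refinedness to propagate vanishing data, and checking the shift arithmetic in the style of Claim~\ref{nearly_consecutive_sequences}, but with two free indices rather than one. Once this branching/obstruction analysis is in place, the enumeration assembles formally from the $r=1$ grid analysis applied traversal by traversal.
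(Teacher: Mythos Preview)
Your proposal is correct and follows essentially the same approach as the paper: the paper's proof is the discussion immediately preceding the theorem statement, which reduces the count to a sum over top-to-bottom traversals $\La$ of the Pl\"ucker poset, applies Lemma~\ref{local_obstruction_r=s-1} to obtain the prohibition grid ${\rm G}(\La)$, and stratifies by backwards edges exactly as in the $r=1$ case. One small correction: your final appeal to Theorem~\ref{thm:smoothing theorem} is unnecessary here, since the theorem as stated counts inclusions of \emph{limit} linear series on the elliptic chain $X$ itself, not on the generic fiber; the smoothing theorem only enters later, in Theorem~\ref{combinatorial_equality_r=s-1}, to compare ${\rm N}_{s-1}(t,u)$ with Macdonald's formula.
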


\subsubsection{Example}
Say $d=6$ and $s=4$. In this case we are counting inclusions
\[
g^1_{4u-2} + p_1+ \dots+ p_6  \hra g^4_{4u+4}
\]
along a chain of genus $g=5u$. Here the relevant Grassmannian is $\text{Gr}(2,5)$, whose Pl\"ucker poset is drawn below.

\hspace{150pt}
\begin{tikzpicture}[scale=.7]
\node (n1) at (3,10) {\{1,2\}};
\node (n2) at (3,9) {\{1,3\}};

\draw (n1)--(n2);

\node (n3) at (2,8) {\{1,4\}};
\node (n4) at (4,8) {\{2,3\}};

\draw (n2)--(n3);
\draw (n2)--(n4);

\node (n5) at (1,7) {\{1,5\}};
\node (n6) at (3,7) {\{2,4\}};

\draw (n3)--(n5);
\draw (n3)--(n6);
\draw (n4)--(n6);

\node (n7) at (2,6) {\{2,5\}};
\node (n8) at (4,6) {\{3,4\}};

\draw (n5)--(n7);
\draw (n6)--(n7);
\draw (n6)--(n8);

\node (n9) at (3,5) {\{3,5\}};
\node (n10) at (3,4) {\{4,5\}};

\draw (n7)--(n9);
\draw (n8)--(n9);
\draw (n9)--(n10);

\end{tikzpicture}


There are five top-to-bottom traversals of the Pl\"ucker poset $\mc{P}$ of $\mbox{Gr}(2,5)$, corresponding to five prohibition sequences $\La$. A typical traversal of $\mc{P}$ is $\La=(1,3,2,4,3,5)$. The corresponding finite diagram $\Ga^4(6;(1,3,2,4,3,5))$ is given by
{\small
\[
\begin{array}{cccccc}
2 & 3 & 4 & 5 & \ast & \empty \\
\empty & 4 & 5 & 1 & 2 & \ast \\
\empty & 5 & 1 & \ast & 3 & 4 \\
\empty & 1 & 2 & 3 & \ast & 5 \\
\empty & 2 & \ast & 4 & 5 & 1 \\
\empty & 3 & 4 & \ast & 1 & 2
\end{array}
\]
}
and the coefficient ${\rm N}^s_k(d; (1,3,2,4,3,5))$ of $\binom{d+u-2-k}{d}$ contributed by prohibition sequence $(1,3,2,4,3,5)$ is equal to the number of top-to-bottom traversals of $\Ga^4(6)$ with (exactly) $0 \leq k \leq d-2$ backwards arrows, plus {\bf 4}, which is the number of top-to-bottom paths (in which we visit precisely one vertex in each row) through the following leftover diagram corresponding to the $u=2$ case:
{\small
\[
\begin{array}{ccc}
2 & 3 \\
\empty & 4 \\
\empty & 5 \\
\empty & 1 \\
\empty & 2 \\
\empty & 3 & 4
\end{array}.
\]
}

\subsection{Comparison with Macdonald's formula}
When $r=s-1$, the second version of Macdonald's formula ~\ref{macdonald_formula} establishes that the virtual number of $2r$-secant $(r-2)$-planes to a smooth curve in $\mb{P}^{r-1}$ is given by
\begin{equation}\label{macdonald_r=s-1}
\frac{(-1)^{r \choose 2}}{r!} \Big[((1+t_1)(1+t_2))^{m-g-s} (1+t_1+t_2)^g (t_1-t_2)^2 \Big]_{t_1^{r+1}t_2^{r+1}}.
\end{equation}
When $\rho=\mu=0$, so that $g=u(s+1)$ and $m=(r+1)(u+1)$, Theorem~\ref{thm:smoothing theorem} and Proposition~\ref{points_of_attachment_avoidance} imply that the expression in \eqref{macdonald_r=s-1} agrees precisely with our good inclusion number ${\rm N}_{s-1}(t,u)$.

\begin{thm}\label{combinatorial_equality_r=s-1}
Assume $\rho=\mu=0$, and let ${\rm N}_{s-1}(t,u)$ denote the number of inclusions \eqref{basic_inclusion_r=s-1} on a general elliptic chain of genus $g=u(s+1)$ for which the ambient series is of combinatorial type $\ubr{(12 \cdots (s+1))}_{\text{u times}}$. We have
\[
{\rm N}_{s-1}(t,u)=\frac{(-1)^{r \choose 2}}{r!} \Big[((1+t_1)(1+t_2))^{-u} (1+t_1+t_2)^{u(r+2)} (t_1-t_2)^2 \Big]_{t_1^{r+1}t_2^{r+1}}
\]
where ${\rm N}_{s-1}(t,u)$ is as defined in the statement of Theorem~\ref{good_inclusions_r=s-1}.
\end{thm}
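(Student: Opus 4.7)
The plan is to identify $N_{s-1}(t,u)$ with the actual (not merely virtual) count of inclusions \eqref{basic_inclusion_r=s-1} on a general smooth curve of genus $g=u(s+1)$, and then invoke Macdonald's formula on the smooth side. The bridge between the two sides of this identification is the smoothing machinery developed in Section~\ref{sec:moduli}, applied to a regular smoothing family $X/B$ whose special fiber $X_0$ is a general chain of $g$ elliptic curves and whose generic fiber $X_\eta$ is a smooth curve of genus $g$.

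First, I would fix an ambient refined limit $\mathfrak g^{r+1}_{(r+1)(u+1)}$ on $X_0$ of word type $\underbrace{(12\cdots(s+1))}_{u\text{ times}}$; since $\rho(g,r+1,(r+1)(u+1))=0$, such limit series exist and are isolated (indeed reduced) points of $G^{r+1}_{w_0}(X_0)$, and each lifts uniquely to a $\mathfrak g^{r+1}_{(r+1)(u+1)}$ on $X_\eta$. By Proposition~\ref{points_of_attachment_avoidance}, every inclusion of limit linear series of the type in \eqref{basic_inclusion_r=s-1} whose ambient series has this word type is good; by Theorem~\ref{good_inclusions_r=s-1} the total number of such good inclusions is $N_{s-1}(t,u)$. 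Applying Theorem~\ref{thm:smoothing theorem}, each such good inclusion corresponds to an isolated, reduced point of $\mathcal G^\circ_0$ that lifts uniquely to an inclusion on $X_\eta$. Running the same argument over every ambient limit series produces a bijection between good inclusions on $X_0$ and inclusions \eqref{basic_inclusion_r=s-1} on $X_\eta$ relative to any fixed ambient $\mathfrak g^{r+1}_{(r+1)(u+1)}$.

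On the smooth side, fixing a $\mathfrak g^{r+1}_{(r+1)(u+1)}$ on $X_\eta$, the expected dimension $\mu(2r, 1, r+1)=0$ together with $\rho(g,r+1,(r+1)(u+1))=0$ puts us exactly in the numerically rigid regime of Macdonald's formula \eqref{macdonald_formula}. Since we have just shown that the moduli of inclusions is zero-dimensional and reduced (by smoothing from reduced isolated points of $\mathcal G^\circ_0$), the virtual count produced by Macdonald's formula equals the actual count, i.e. equals $N_{s-1}(t,u)$. Specializing \eqref{macdonald_formula} to $r=s-1$ (so that $s-d+r+1=2$ and the two $t$-variables are $t_1,t_2$) and substituting $g=u(s+1)$, $m=(r+1)(u+1)$, $s=r+1$ yields precisely
\[
\frac{(-1)^{r\choose 2}}{r!}\bigl[((1+t_1)(1+t_2))^{-u}(1+t_1+t_2)^{u(r+2)}(t_1-t_2)^2\bigr]_{t_1^{r+1}t_2^{r+1}},
\]
which is the right-hand side of the claimed identity.

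The main obstacle is not really combinatorial but conceptual: one must verify carefully that Macdonald's virtual count and our good-inclusion count are indexed by the same fixed ambient series, and that the correspondence between $\mathcal G^\circ_0$ and the generic fiber respects this indexing. This is where Theorem~\ref{thm:smoothing theorem} does the heavy lifting---it is exactly the content that each such isolated reduced point of the special fiber deforms to a unique geometric point on the generic fiber, and conversely every inclusion on $X_\eta$ specializes to a good inclusion (thanks to Proposition~\ref{points_of_attachment_avoidance} ruling out degenerations with base points at nodes). Once these two facts are in hand the identification is purely formal, and no further calculation beyond unwinding Macdonald's formula in the $r=s-1$ specialization is required.
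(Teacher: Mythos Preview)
Your proposal is correct and follows essentially the same approach as the paper: combine Proposition~\ref{points_of_attachment_avoidance} (all inclusions with ambient series of this word type are good) with the smoothing Theorem~\ref{thm:smoothing theorem} to identify $N_{s-1}(t,u)$ with the actual count on the generic fiber, then specialize Macdonald's formula \eqref{macdonald_formula} using $s-d+r+1=2$, $s-d+2r=r+1$, $m-g-s=-u$, and $g=u(r+2)$. The paper's own argument is the single sentence preceding the theorem statement, invoking exactly Theorem~\ref{thm:smoothing theorem} and Proposition~\ref{points_of_attachment_avoidance}; your write-up simply unpacks that sentence.
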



\subsection{The general case}
Counting included series inside a fixed ambient series $g^s_m$ of type $\ubr{(12 \cdots (s+1))}_{\text{u times}}$ when $\rho=\mu=0$ and $1 \leq r \leq s-1$ is arbitrary follows the basic template established in the discussion of the $r=s-1$ case; the caveat is that one must replace the Pl\"ucker poset by the appropriate subposet determined by the collection of possible shift sequences induced by the placement of base points along the (interiors of) components of a general elliptic chain curve $X$. In this way we obtain an obvious generalization of Theorems ~\ref{equality_of_numbers} and \ref{combinatorial_equality_r=s-1}.

\begin{rem}
The monodromy action on $G^s_m(C)$ induced by a degeneration of a general genus-$g$ curve $C$ to a flag curve is well-known to be transitive whenever $\rho(g,s,m)=0$ by a celebrated result of Eisenbud and Harris; it follows that the number of $d$-secant $(d-r-1)$-planes to (the image of) each $g^s_m$ is constant whenever $\mu=0$. In particular, the {\it total number} of $d$-secant $(d-r-1)$-planes to series $g^s_m$ on a general curve is simply $\eta$ times the Macdonald number $N(g,s,m,d,r)$, where $\eta=\eta(g,s,m)$ is the generalized Catalan number
\[
\eta= g! \cdot \prod_{i=0}^s \fr{i!}{(g-m+s+i)!}.
\]
Our results thus show that when $\rho=\mu=0$ all secant planes on a general curve are ``detected" by inclusions of limit linear series. A next logical line of inquiry would be to extend our counting scheme to compute counts of secant planes along a general curve in situations for which the {\it sum} $\rho+\mu$ is zero, but $\rho$ itself is positive.
\end{rem}

\section{A dimension-theoretic moduli pathology}\label{pathology}
What follows is an example, due to Melody Chan, that shows that the moduli space of included limit linear series constructed in Section~\ref{sec:moduli} may have unexpectedly large dimension. This already occurs near the points whose ambient limit linear series has word type distinct from that used in our smoothing theorem~\ref{thm:smoothing theorem}.

\begin{ex}\label{ex:Chan's example}
Let $(g,d_1,d_2,r_1,r_2)=(12,10,7,2,1)$. Here $\rho(g,r_1,d_1)=0 $ and $\mu=-1$, so we expect the corresponding moduli space of inclusions of limit linear  series to be empty. We will show that this is not the case in general. Indeed, let $E$ be a chain of 13 curves $Z_i$ in which all but the middle component are elliptic, and the remaining component $Z_7$ is rational. There is a {\it one-dimensional} family of inclusions of limit linear series with three base points supported on the rational component. In this family, the ambient limit linear series is constant and its aspects are specified by the following vanishing sequences at the nodes:

{\tiny
\[\begin{tabular}{cccc}
$Z_1\colon$&\underline 0&\underline 1&2\\&\underline{10}&\underline 8&7\\
\hline
$Z_2\colon$ &\underline 0&\underline 2&3\\&\underline{10}&\underline 7&6\\
\hline
 $Z_3\colon$ &   \underline 0&\underline 3&4\\&\underline{10}&\underline 6&5\\
\hline
$Z_4\colon$ &\underline 0&\underline 4&5\\&\underline 9&\underline 6&4\\
\hline
$Z_5\colon$&\underline 0&\underline 5&6\\&\underline 9&\underline 5&3\\
\hline
$Z_6\colon$&\underline 1&\underline 5&7\\&\underline 8&\underline 5&2
\\
\hline
$Z_7=\mathbb P^1_k\colon$ &\underline 2&\underline 5&8\\&8&\underline 5&\underline 2\\
\hline
$Z_8\colon$ &2&\underline 5&\underline 8\\&7&\underline 5&\underline 1\\
\hline
$Z_9\colon$& 3&\underline 5&\underline 9\\&6&\underline  5&\underline 0\\
\hline
$Z_{10}\colon$ &4&\underline 5&\underline{10}\\&5&\underline 4&\underline 0\\
\hline
$Z_{11}\colon $&5&\underline 6&\underline{10}\\&4&\underline 3&\underline 0\\
\hline
$Z_{12}\colon$ &6&\underline 7&\underline{10}\\&3&\underline 2&\underline 0\\
\hline
$Z_{13}\colon$&7&\underline 8&\underline{10}\\&2&\underline 1&\underline 0.
\end{tabular}\]
}
The included linear series on each elliptic component $Z_j$, $j\neq 7$, is specified unambiguously by the underlined vanishing orders, however there is a one-dimensional ambiguity along $Z_7$. To see this, let $(x,y)$ be projective coordinates on $Z_7$, and let $P=0=Z_6\cap Z_7$ and $Q=\infty=Z_7\cap Z_8$ be the two distinguished points of attachment on $Z_7$. 
Letting $V^1_7:=\mathrm{span}\{x^8y^2,x^5y^5,x^2y^8\}$ and $V^2_7:=\mathrm{span}\{x^8y^2+\lambda x^5y^5,x^5y^5+\lambda x^2y^8\}$, we obtain an inclusion of linear series $V^2_7 \hra V^1_7$ for every $\lambda \neq 0$. So associated with every $\lambda\neq 0$ there is an inclusion of limit linear series with three base points $\{(\omega:1)|\omega^3=-\lambda\}$ on $Z_7$.
\end{ex}

\bibliographystyle{amsalpha}
\bibliography{myrefs2}

\providecommand{\bysame}{\leavevmode\hbox to3em{\hrulefill}\thinspace}
\providecommand{\MR}{\relax\ifhmode\unskip\space\fi MR }
\providecommand{\MRhref}[2]{%
  \href{http://www.ams.org/mathscinet-getitem?mr=#1}{#2}
}
\providecommand{\href}[2]{#2}
\begin{thebibliography}{LMTiB17}

\bibitem[ACGH85]{ACGH}
E.~Arbarello, M.~Cornalba, P.A. Griffiths, and J.~Harris, \emph{Geometry of
  {A}lgebraic {C}urves {V}olume {I}}, Springer, 1985.

\bibitem[BN07]{baker2007riemann}
M.~Baker and S.~Norine, \emph{Riemann--{R}och and {A}bel--{J}acobi {T}heory on
  a {F}inite {G}raph}, Advances in Mathematics \textbf{215} (2007), no.~2,
  766--788.

\bibitem[Cot11]{Co1}
E.~Cotterill, \emph{Geometry of {C}urves with {E}xceptional {S}ecant {P}lanes:
  {L}inear {S}eries along the {G}eneral {C}urve}, Mathematische Zeitschrift
  \textbf{267} (2011), 549--582.

\bibitem[Cot12]{Co2}
\bysame, \emph{Effective {D}ivisors on $\overline{M}_g$ {A}ssociated to
  {C}urves with {E}xceptional {S}ecant {P}lanes}, Manuscripta Mathematica
  \textbf{138} (2012), 171--202.

\bibitem[Far08]{FSec}
G.~Farkas, \emph{Higher {R}amification and {V}arieties of {S}ecant {D}ivisors
  on the {G}eneric {C}urve}, Journal of London Mathematical Society \textbf{78}
  (2008), no.~2, 418--440.

\bibitem[FK94]{mumford1994geometric}
D.~Mumford{,}~J. Fogarty{,} and F.~Kirwan, \emph{Geometric {I}nvariant
  {T}heory}, 3rd ed., Springer, 1994.

\bibitem[HM98]{MOCURVE}
J.~Harris and I.~Morrison, \emph{Moduli of {C}urves}, Springer, 1998.

\bibitem[KL04]{kreiman2004richardson}
V.~Kreiman and V.~Lakshmibai, \emph{Richardson {V}arieties in the
  {G}rassmannian}, Contributions to Automorphic Forms, Geometry, and Number
  Theory (2004), 573--597.

\bibitem[Lic68]{lichtenbaum1968curves}
S.~Lichtenbaum, \emph{Curves over {D}iscrete {V}aluation {R}ings}, American
  Journal of Mathematics \textbf{90} (1968), no.~2, 380--405.

\bibitem[LMTiB17]{LT}
A.~L\'opez~Martin and M.~Teixidor~i Bigas, \emph{Limit linear series on chains
  of elliptic curves and tropical divisors on chains of loops}, Documenta Math.
  \textbf{22} (2017), 263--286.

\bibitem[LO18]{Lieblich2018}
M.~Lieblich and B.~Osserman, \emph{Universal {L}imit {L}inear {S}eries and
  {D}escent of {M}oduli {S}paces}, Manuscripta Mathematica, to appear (2018).

\bibitem[MO16]{murray2016linked}
J.~Murray and B.~Osserman, \emph{Linked {D}eterminantal {L}oci and {L}imit
  {L}inear {S}eries}, Proceedings of the American Mathematical Society
  \textbf{144} (2016), no.~6, 2399--2410.

\bibitem[Oss11]{osserman2011linked}
B.~Osserman, \emph{Linked {H}om {S}paces}, Mathematical Research Letters
  \textbf{18} (2011), no.~2, 329--335.

\bibitem[Oss14]{osserman2014higherlimit}
\bysame, \emph{Limit {L}inear {S}eries {M}oduli {S}tacks in {H}igher {R}ank},
  arXiv:1405.2937 (2014).

\bibitem[Oss15]{osserman2015relative}
\bysame, \emph{Relative {D}imension of {M}orphisms and {D}imension for
  {A}lgebraic {S}tacks}, Journal of Algebra \textbf{437} (2015), 52--78.

\bibitem[Oss17]{osserman2014limit}
\bysame, \emph{Limit {L}inear {S}eries for {C}urves not of {C}ompact {T}ype},
  Journal f\"ur die Reine und Angewandte Mathematik (Crelle's journal) (2017).

\end{thebibliography}
\end{document}